\definecolor{myurlcolor}{rgb}{0.1,0.1,0.8}
\definecolor{mylinkcolor}{rgb}{0.05,0.05,0.4}
\theoremstyle{theorem}
\newtheorem{thm}{Theorem}
\newtheorem*{theorem*}{Theorem}
\numberwithin{thm}{section}
\newtheorem{cor}[thm]{Corollary}
\newtheorem{lem}[thm]{Lemma}
\newtheorem{prop}[thm]{Proposition}
\theoremstyle{definition}
\newtheorem{defn}[thm]{Definition}
\newtheorem{example}[thm]{Example}
\newtheorem{rmk}[thm]{Remark}
\newtheorem{rmks}[thm]{Remarks}
\newcommand{\demph}[1]{\textbf{#1}} 
\newcommand{\cn}{\mathbf}
\newcommand{\bb}{\mathbb}
\newcommand{\R}{\bb{R}}
\newcommand{\xto}{\xrightarrow}
\newcommand{\st}{\, : \,}
\newcommand{\supp}{\mathrm{supp} \, }
\newcommand{\esssup}{\mathrm{ess \, sup}}
\newcommand{\essinf}{\mathrm{ess \, inf}}
\newcommand{\wext}[1]{\widehat{#1}} 
\newcommand{\Dmax}[2]{D_{\max}(#1, #2)}
\newcommand{\Dmx}[1]{D_{\max}(#1)}
\newcommand{\Hmax}[2]{H_{\max}(#1, #2)}
\newcommand{\Hmx}[1]{H_{\max}(#1)}
\newcommand{\glob}[1]{\textbf{\textit{#1}}}
\newcommand{\hard}[1]{#1}
\newcommand{\ip}[2]{\langle #1, #2 \rangle}
\newcommand{\Ip}[2]{\bigl\langle #1, #2 \bigr\rangle}
\newcommand{\cell}[4]{\put(#1,#2){\makebox(0,0)[#3]{#4}}}
\newcommand{\dH}{d_{\mathrm{H}}}
\renewcommand{\d}{\,\mathrm{d}}
\renewcommand{\vec}{\mathbf}
\renewcommand{\epsilon}{\varepsilon}
\title{The maximum entropy of a metric space}
\author{Tom Leinster%
\thanks{School of Mathematics, University of Edinburgh;
  Tom.Leinster@ed.ac.uk}
 \qquad 
Emily Roff%
\thanks{School of Mathematics, University of Edinburgh;
Emily.Roff@ed.ac.uk}}
\date{\vspace*{-1cm}}
\begin{document}
\sloppy
\maketitle

\begin{abstract}
We define a one-parameter family of entropies, each assigning a real number
to any probability measure on a compact metric space (or, more generally, a
compact Hausdorff space with a notion of similarity between points).  These
generalise the Shannon and R\'enyi entropies of information theory.

We prove that on any space $X$, there is a single probability measure
maximising all these entropies simultaneously.  Moreover, all the entropies
have the same maximum value: the \emph{maximum entropy} of $X$.  As $X$ is
scaled up, the maximum entropy grows, and its asymptotics determine
geometric information about $X$ including the volume and dimension.  And
the large-scale limit of the maximising measure itself provides an
answer to the question: what is the canonical measure on a metric space?

Primarily we work not with entropy itself but its exponential, which in its
finite form is already in use as a measure of biodiversity.  Our main
theorem was first proved in the finite case by Leinster and
Meckes~\cite{LeinsterMaximizing2016}.
\end{abstract}

\tableofcontents


\section{Introduction}
\label{S_intro}

This paper introduces and explores a largely new invariant of compact
metric spaces: the maximum entropy.  Intuitively, this measures how much room
a probability distribution on the space has available to spread out.

Maximum entropy has several claims to importance.  First, it is the maximal
value of not just \emph{one} measure of entropy, but an \emph{uncountable
  infinity} of them.  It is a theorem, proved here, that these entropy
measures all have the same maximum.  

Second, the entropies concerned are already established in ecology, where
their exponentials are used as measures of biological
diversity~\cite{LeinsterMeasuring2012}.  Indeed, they have been applied to
ecological systems at all scales, from microbes~\cite{BCMV} and
plankton~\cite{JTYP} to fungi~\cite{VPDL}, plants~\cite{CMLT}, and large
mammals~\cite{BRBT}.  Relative to other diversity measures, they have been
found to improve inferences about the diversity of natural
systems~\cite{VPDL}.

Third, the exponential of maximum entropy---called maximum
diversity---plays a similar conceptual role for metric spaces as
cardinality does for sets.  In the special case of a finite space where all
distances are $\infty$, it is literally the cardinality, and in general, it
increases when the space is enlarged (either by adding new points or
increasing distances).

Fourth, unlike most geometric invariants, maximum entropy is `informative
under rescaling': the maximum entropy of a metric space $X$ does not
determine the maximum entropy of $tX$ for scale factors $t \neq 1$.
Maximum entropy therefore assigns to $X$ not just a single \emph{number},
but a \emph{function}, the maximum entropy of $tX$ as a function of $t$.
The asymptotics of this function turn out to determine the volume and
dimension of $X$---themselves geometric analogues of cardinality.

Finally, maximum diversity is in principle a known quantity in potential
theory, where it belongs to the family of Bessel capacities, although it
lies just outside the part of the family usually studied by potential
theorists (Remark~\ref{rmks:Dmax-euc}(\ref{rmk:Dmax-euc-cap}) below
and~\cite{Leinstermagnitude2017}, Proposition~4.22).  This connection has
been exploited by Meckes to prove results on magnitude, a closely related
invariant of metric spaces (\cite{MeckesMagnitude2015}, Corollary~7.2).

These infinitely many entropies do not only attain the same maximum
\emph{value} on a given space $X$; better still, there is a single
\emph{probability distribution} that maximizes them all simultaneously.
Passing to the large-scale limit gives a canonical, scale-independent
probability measure on $X$.  For example, if $X$ is isometric to a subset
of Euclidean space then this measure is normalized Lebesgue. It is a general
construction of a `uniform measure' on an abstract metric space.

\paragraph{Measuring diversity}

The backdrop for the theory is a compact Hausdorff topological space \(X\),
equipped with a way to measure the similarity between each pair of
points. This data is encoded as a \emph{similarity kernel}: a continuous
function \(K: X \times X \to [0, \infty)\) taking strictly positive values
  on the diagonal. We call the pair \((X,K)\) a \emph{space with
    similarities}.

In a metric space, we view points as similar if they are close together,
defining a similarity kernel by \(K(x,y) = e^{-d(x,y)}\). Of course,
other choices of kernel are possible, but this particular choice
proves to be a wise one (Example~\ref{eg:metric_1}).  For simplicity, in
this introduction we focus on the case of metric spaces rather than fully
general spaces with similarity.

We would like to quantify the extent to which a probability distribution on
a metric space is spread out across the space, in a way that is sensitive
to distance.  A thinly spread distribution will be said to have `high
diversity', or equivalently `high entropy'.  

\begin{figure}
\setlength{\unitlength}{1mm}
\begin{picture}(120,25)
\cell{18}{5}{b}{\includegraphics[width=36mm]{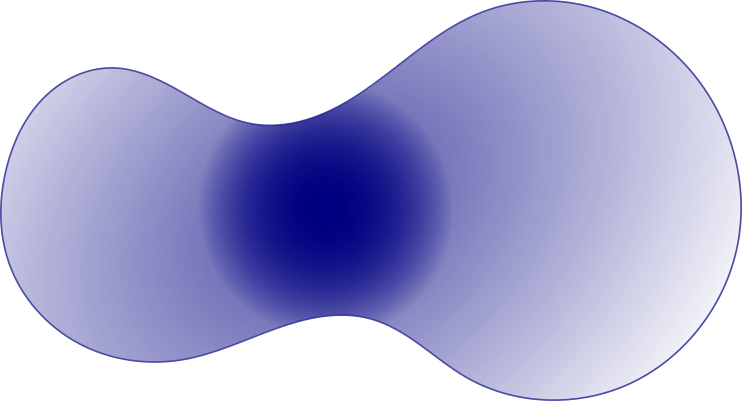}} 
\cell{60}{5}{b}{\includegraphics[width=36mm]{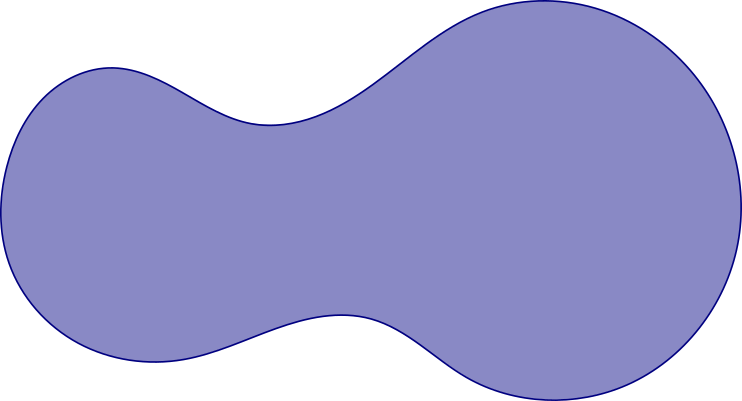}} 
\cell{102}{5}{b}{\includegraphics[width=36mm]{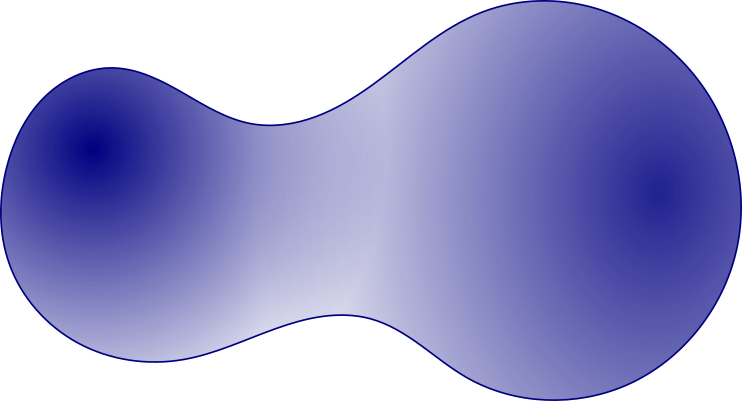}} 
\cell{18}{0}{b}{(a)}
\cell{60}{0}{b}{(b)}
\cell{102}{0}{b}{(c)}
\end{picture}
\caption{Three probability measures on a subset of the plane. Dark regions
  indicate high concentration of measure.}
\label{fig:distros}
\end{figure}

Figure~\ref{fig:distros} depicts three distributions on the same space.
Distribution~\hard{(a)} is the least diverse, with most of its mass
concentrated in a small region.  Distribution~\hard{(b)} is uniform, and
might therefore seem to be the most diverse or thinly spread distribution
possible.  However, there is an argument that~\hard{(c)} is
more diverse.  In moving from~\hard{(b)} to~\hard{(c)}, some of the mass
has been pushed out to the ends, so a pair of points chosen at random
according to distribution~\hard{(c)} may be more likely to be far apart
than when chosen according to~\hard{(b)}.

One can indeed define diversity in terms of the expected proximity between
a random pair of points.  But that is just one of an infinite family of
ways to quantify diversity, each of which captures something different
about how a distribution is spread across the space.

To define that family of diversity measures, we first introduce the notion
of the \emph{typicality} of a point with respect to a distribution. Given a
compact metric space \(X\), a probability measure \(\mu\) on \(X\), and a
point \(x \in X\), we regard \(x\) as `typical' of \(\mu\) if a point
chosen at random according to \(\mu\) is usually near to \(x\).  Formally,
define a function \(K\mu\) on $X$ by
\[
(K\mu)(x) = \int e^{-d(x,\cdot)} \d\mu.
\]
We call $(K\mu)(x)$ the typicality of $x$, and $1/(K\mu)(x)$ its
atypicality. 

A distribution is widely spread across \(X\) if most points are distant
from most of the mass---that is, if the atypicality function \(1/K\mu\)
takes large values on most of \(X\). A reasonable way to quantify the
diversity of a probability measure \(\mu\), then, is as the average
atypicality of points in \(X\). Here the `average' need not be the
arithmetic mean, but could be a power mean of any order.  Thus, we obtain
an infinite family \((D_q^K)_{q \in [-\infty, \infty]}\) of
diversities. Explicitly, for \(q \neq 1, \pm \infty\), we define the
diversity of order \(q\) of \(\mu\) to be
\[
D_q^K(\mu) 
= 
\left( \int \left( 1/K\mu \right)^{1-q} \d\mu \right)^{1/(1-q)},
\] 
while at \(q=1\) and \(q = \pm \infty\) this expression takes its limiting
values.  The entropy $H_q^K(\mu)$ of order $q$ is $\log D_q^K(\mu)$:
entropy is the logarithm of diversity.

\paragraph{Diversity and entropy}
Any finite set can be given the structure of a compact metric space by
taking all distances between distinct points to be $\infty$.  The
similarity kernel $K = e^{-d(\cdot, \cdot)}$ is then the Kronecker delta
$\delta$.  In this trivial case, the entropy $H_q^\delta$ is precisely the
R\'enyi entropy of order $q$, well-known in information theory.  In
particular, $H_1^\delta$ is Shannon entropy.

Entropy is an important quantitative and conceptual tool in many fields,
including in mathematical ecology, where the exponentials $D_q^\delta$ of
the R\'enyi entropies are known as the Hill numbers and used as measures of
biological diversity~\cite{HillDiversity1973}. In this application, $X$ is
the finite set of species in some ecological community, and $\mu$ encodes
their relative abundances.

However, the Hill numbers fail to reflect a fundamental intuition about
diversity: all else being equal, a biological community is regarded as more
diverse when the species are very different than when they are very
similar.  To repair this deficiency, one can equip the set of species in an
ecological community with a kernel (matrix) \(K\) recording their pairwise
similarities. The choice \(K = \delta\) represents the crude assumption
that each species is completely dissimilar to each other species.  Thus,
for arbitrary $K$, the diversities $D_q^K$ are generalised Hill numbers,
sensitive to species similarity~\cite{LeinsterMeasuring2012}.  Here we
generalise further, from a finite set $X$ to any compact Hausdorff space.

\paragraph{The maximisation theorem}

Crucially, when comparing the diversity of distributions, different values
of the parameter \(q\) lead to different judgements. That is, given a
collection \(M\) of probability measures on a metric space and given
distinct $q, q' \in [0, \infty]$, the diversities \(D_q^K\) and
\(D_{q'}^K\) generally give different orderings to the elements of
\(M\).  Examples in the ecological setting can be found in Section~5
of~\cite{LeinsterMeasuring2012}.

The surprise of our main theorem (Theorem~\ref{thm:main}) is that when it
comes to \emph{maximising} diversity, there is consensus: there is
guaranteed to exist some probability measure \(\mu\) on our space
that maximises \(D_q^K(\mu)\) for every nonnegative \(q\) at
once. Moreover, the diversity of order $q$ of a maximising distribution is
the same for all $q \in [0, \infty]$. Thus, one can speak unambiguously of
the maximum diversity of a compact metric space \(X\)---defined to be
\[
\Dmx{X} = \sup_{\mu} D_q^K(\mu)
\]
for any \(q \in [0,\infty]\)---knowing that there exists a probability
distribution attaining this supremum for all orders $q$.

In the case of a metric space, Theorem~\ref{thm:main} states the
following.

\begin{theorem*}
Let \(X\) be a nonempty compact metric space.
\begin{enumerate}[(i)]
\item
There exists a probability measure \(\mu\) on $X$ that maximises
\(D_q^K(\mu)\) for all \(q \in [0, \infty]\) simultaneously.

\item
The maximum diversity $\sup_{\mu} D_q^K(\mu)$ is independent of
\(q \in [0, \infty]\).
\end{enumerate}
\end{theorem*}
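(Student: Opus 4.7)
My plan is to construct a maximising measure via a single auxiliary variational problem, check that its diversity profile is automatically flat in $q$, and then reduce optimality to one key upper bound.

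First I consider the energy functional $E(\mu) = \iint K(x,y) \d\mu(x) \d\mu(y)$. Since $K$ is continuous on compact $X \times X$, $E$ is weak-$*$ continuous on the weak-$*$ compact space of Borel probability measures on $X$, and so attains its minimum at some $\mu^*$; set $c = E(\mu^*) > 0$. A first-order variational calculation at $\mu^*$---comparing $\mu^*$ with $\mu^* + t(\delta_x - \delta_y)$ for $y \in \supp \mu^*$, $x \in X$, $t \to 0^+$---gives the Kuhn--Tucker conditions $K\mu^* \equiv c$ on $\supp \mu^*$ and $K\mu^* \geq c$ on all of $X$. Because $1/K\mu^* \equiv 1/c$ on $\supp \mu^*$, the defining integrals for $D_q^K(\mu^*)$ collapse to $(1/c)^{1-q}$, so $D_q^K(\mu^*) = 1/c$ for every $q \in [0, \infty]$ (the values at $q = 1$ and $q = \infty$ being obtained as limits). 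This single computation shows that $\mu^*$ realises the same diversity $1/c$ at every order, so once optimality is in hand, both (i) and the equality statement (ii) follow at once.

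For optimality, the map $q \mapsto D_q^K(\mu)$ is nonincreasing by the standard monotonicity of power means applied to $1/K\mu$ weighted by $\mu$, so the single inequality $D_0^K(\mu) \leq 1/c$ for every probability $\mu$ would suffice. An immediate free bound is $D_2^K(\mu) = 1/E(\mu) \leq 1/c$, which via monotonicity already yields $D_q^K(\mu) \leq 1/c$ for all $q \geq 2$; but this leaves the genuinely delicate range $q \in [0, 2)$, culminating in $q = 0$.

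The crux is therefore $\int 1/K\mu \d\mu \leq 1/c$ for every probability $\mu$. Multiplying through by $c$ and using the pointwise bound $c \leq K\mu^*$ reduces the claim to
\[
\iint \frac{K(x,y)}{K\mu(x)} \d\mu(x) \d\mu^*(y) \leq 1,
\]
whose integrand can exceed $1$ pointwise, so a global argument is essential. My fallback is the finite-space theorem of Leinster--Meckes cited in the excerpt: take an exhausting sequence of finite $\epsilon$-nets $X_n \subset X$, apply the finite maximisation result to the restricted kernel to obtain maximising measures $\mu_n$ with maximum diversities $1/c_n$, extract a weak-$*$ limit, and transfer the upper bound using continuity of $\mu \mapsto K\mu$ together with the convergence $c_n \to c$. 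I expect the main obstacle to be precisely this upper-bound inequality (or equivalently $c_n \to c$ in the approximation): it is the infinite-dimensional analogue of the magnitude--weighting duality exploited in the finite case, and any clean direct proof will have to leverage the global equilibrium condition on $\mu^*$ via a Fubini manoeuvre against $\mu^*$ combined with the identity $K\mu^* \equiv c$ on $\supp \mu^*$, rather than just the pointwise first-order condition alone.
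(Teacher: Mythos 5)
Your construction of $\mu^*$ is sound: $E$ is weak\textsuperscript{*} continuous on the compact space $P(X)$, and the convex perturbation $(1-s)\mu^* + s\delta_x$ (rather than $\mu^* + t(\delta_x - \delta_y)$, which need not be a positive measure) does give $K\mu^* \geq c$ on $X$ and $K\mu^* \equiv c$ on $\supp\mu^*$, whence $D_q^K(\mu^*) = 1/c$ for all $q \in [0,\infty]$. But the step you yourself call the crux --- the inequality $D_0^K(\mu) = \int (K\mu)^{-1} \d\mu \leq 1/c$ for every $\mu \in P(X)$ --- is not proved, and it is not a technical remainder: it is essentially the entire content of the theorem. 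What the equilibrium conditions at $\mu^*$ actually deliver is that $\mu^*$ is balanced and $2$-maximising; power-mean monotonicity then gives $D_q^K(\mu) \leq D_2^K(\mu) \leq 1/c$ only for $q \geq 2$, and for $q < 2$ the monotonicity points the wrong way. (The fact that a $2$-maximising measure is automatically maximising is true, but in the paper it is Corollary~\ref{cor:one_is_enough}, a \emph{consequence} of Theorem~\ref{thm:main}, so it cannot be assumed here.) As written, your argument establishes (i) and (ii) with $[0,\infty]$ replaced by $[2,\infty]$; the hard range, above all $q=0$, is exactly what remains, and your text acknowledges this by labelling the missing bound the ``main obstacle'' rather than closing it. That is a genuine gap.

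For what it is worth, your fallback sketch can be completed, and doing so would give a proof genuinely different from the paper's. Given $\mu \in P(X)$ and a finite $\epsilon_n$-net $X_n$, push $\mu$ forward along a Borel nearest-point map $\pi_n$; uniform continuity of $K$ gives $(K \pi_{n*}\mu)(\pi_n x) \leq (K\mu)(x) + \omega(2\epsilon_n)$, the finite Leinster--Meckes theorem gives $\int (K\pi_{n*}\mu)^{-1} \d (\pi_{n*}\mu) \leq 1/c_n$ where $c_n = \min_{P(X_n)} E \geq c$, and monotone convergence as $\epsilon_n \to 0$ then yields $D_0^K(\mu) \leq 1/c$; note that only $c_n \geq c$ is needed, not $c_n \to c$, so the convergence you worried about is a non-issue. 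The paper takes a different route that never invokes the finite case: it proves that for $q \in (0,1)$ every $q$-maximising measure is balanced (Proposition~\ref{prop:qmax_balanced}, a delicate variational argument resting on continuity of $D_q^K$ in $\mu$, Proposition~\ref{prop:diversity_cts_mu}, and approximation of Dirac measures, Lemma~\ref{lem:approximate_delta}), and then extracts a balanced $0$-maximising measure as a weak\textsuperscript{*} cluster point of balanced $q$-maximisers as $q \to 0^+$, using that balancedness and $q$-maximality are closed conditions (Lemma~\ref{lem:closed}). That approach works for arbitrary nonempty symmetric compact Hausdorff spaces with similarities, where $\epsilon$-nets are unavailable, whereas the net argument is confined to the metric setting of the statement.
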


This theorem extends to compact spaces a result that was established for
finite spaces in~\cite{LeinsterMaximizing2016}. (The maximising measure on
a finite metric space is not usually uniform, unless, for instance, the
space is homogeneous.)  While the proof of the result for compact spaces
follows broadly the same strategy as in the finite case, substantial
analytic issues arise.

\paragraph{Geometric connections}

The maximum diversity theorem has geometric significance, linking diversity
measures to fundamental invariants in classical convex geometry and 
geometric measure theory. 

More specifically, Corollary~\ref{cor:comp} of our main theorem connects
maximum diversity with another, more extensively studied invariant of a
metric space: its magnitude.  First introduced as a generalised Euler
characteristic for enriched categories \cite{LeinsterEuler2008,
  Leinstermagnitude2013}, magnitude specialises to metric spaces by way of
Lawvere's observation that metric spaces are enriched
categories~\cite{LawvereMetric1973}. The magnitude \(|X| \in \R\) of a
metric space \(X\) captures a rich variety of classical geometric data,
including some curvature measures of Riemannian manifolds and intrinsic
volumes in \(\ell_1^n\) and Euclidean space. The definition of magnitude
and a few of its basic properties are given in Sections~\ref{S_magnitude}
and~\ref{S_metric} below; \cite{Leinstermagnitude2017} provides a full
survey.

We show that the maximum diversity of a compact space is equal to the
magnitude of a certain subset: the support of any maximising measure
(Sections~\ref{S_prep_lemmas} and~\ref{S_main}). We then use this fact, and
known facts about magnitude, to establish examples of maximum
diversity for metric spaces (Section~\ref{S_metric}).

Many results on magnitude are asymptotic, in the following sense. Given a
space \(X\) with metric \(d\), and a positive real number \(t\), define the
scaled metric space \(tX\) to be the set \(X\) equipped with the metric \(t
\cdot d\). It has proved fruitful to consider, for a fixed metric space
\(X\), the entire family of spaces \((tX)_{t > 0}\) and the
(partially-defined) magnitude function \(t \mapsto |tX|\). For instance, in
\cite{Barcelomagnitudes2018}, Barcel\'o and Carbery showed that the volume
of a compact subset of $\R^n$ can be recovered as the leading term in the
asymptotic expansion of its magnitude function, while in
\cite{Gimperleinmagnitude2017}, Gimperlein and Goffeng showed (subject to
technical conditions) that lower order terms capture surface area and the
integral of mean curvature.

Given this, and given the relationship between magnitude and maximum
diversity, it is natural to consider the function \(t \mapsto
\Dmx{tX}\). Indeed, the asymptotic properties of maximum diversity have
already been shown to be of geometric interest. In
\cite{MeckesMagnitude2015}, Meckes defined the maximum diversity of a
compact metric space to be the maximum value of its diversity of order~2, and
used this definition---now vindicated by our main theorem---to prove the
following relationship between maximum diversity and Minkowski dimension:
\begin{thm}[Meckes~\cite{MeckesMagnitude2015}, Theorem~7.1]
\label{thm:minkowski}
For a compact metric space $X$,
\[
\lim_{t \to \infty} \frac{\log \Dmx{tX}}{\log t} 
= 
\dim_{\mathrm{Mink}}(X),
\]
with the left-hand side defined if and only if the right-hand side is
defined. 
\end{thm}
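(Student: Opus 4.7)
The plan is to sandwich $\log \Dmx{tX}/\log t$ between expressions involving the $\epsilon$-packing and $\epsilon$-covering numbers $P_\epsilon(X)$ and $N_\epsilon(X)$ of $X$, and then let $\epsilon = \epsilon(t) \to 0$ at rates for which both bounds converge to $\dim_{\mathrm{Mink}}(X)$. Since $N_{2\epsilon}(X) \leq P_\epsilon(X) \leq N_{\epsilon/2}(X)$, the Minkowski dimension (when defined) is the common limit of $\log P_\epsilon(X)/\log(1/\epsilon)$ and $\log N_\epsilon(X)/\log(1/\epsilon)$ as $\epsilon \to 0$. I would prove the two one-sided versions of the theorem separately---$\liminf_{t \to \infty} \log \Dmx{tX}/\log t \geq \underline{\dim}_{\mathrm{Mink}}(X)$ and $\limsup_{t \to \infty} \log \Dmx{tX}/\log t \leq \overline{\dim}_{\mathrm{Mink}}(X)$---whereupon the ``if and only if'' clause follows automatically.

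For the lower bound I would exploit monotonicity of $\Dmx{\cdot}$ under metric subspaces. If $S \subseteq X$ is $\epsilon$-separated with $|S| = n$, then the uniform measure $\mu$ on $S \subseteq tX$ satisfies $(K\mu)(x) \leq (1 + (n-1)e^{-t\epsilon})/n$ for every $x \in S$, so $\Dmx{tX} \geq D_0^K(\mu) \geq n/(1 + n e^{-t\epsilon})$; Theorem~\ref{thm:main} is invoked here to write $\sup_\nu D_0^K(\nu) = \Dmx{tX}$. Taking $S$ to be a maximal $\epsilon$-separated set (so $n = P_\epsilon(X)$) and choosing $\epsilon(t) = (\log t)^2/t$, so that $t\epsilon(t) = (\log t)^2$ eventually dominates $\log P_{\epsilon(t)}(X)$, gives $\log \Dmx{tX} \geq \log P_{\epsilon(t)}(X) - o(1)$. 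Since $\log(1/\epsilon(t))/\log t \to 1$, the liminf inequality follows.

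For the upper bound I would use Cauchy--Schwarz. Partition $X$ into $N \leq N_\epsilon(X)$ pieces $A_1, \ldots, A_N$ of diameter at most $2\epsilon$. For a maximising measure $\mu^*$---whose existence, together with the identity $\Dmx{tX}^{-1} = \int K\mu^* \d\mu^*$ obtained via $\Dmx{tX} = D_2^K(\mu^*)$, comes from Theorem~\ref{thm:main}---the discrete Cauchy--Schwarz inequality applied to $(\mu^*(A_i))_{i=1}^N$ gives $\int\!\!\int K(x,y) \d\mu^*(x) \d\mu^*(y) \geq e^{-2t\epsilon} \sum_{i=1}^N \mu^*(A_i)^2 \geq e^{-2t\epsilon}/N$, whence $\Dmx{tX} \leq N_\epsilon(X) \cdot e^{2t\epsilon}$. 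Choosing $\epsilon(t) = 1/t$ makes the multiplicative error $O(1)$, and the limsup inequality follows after dividing by $\log t$ and letting $t \to \infty$.

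The conceptually hardest ingredient is the one already supplied by Theorem~\ref{thm:main}: the existence of a single measure $\mu^*$ attaining $\Dmx{tX}$ simultaneously for every $q \in [0, \infty]$. This is what permits the strategy above to use $D_2^K$ in the upper-bound Cauchy--Schwarz argument and $D_0^K$ in the lower-bound packing argument, applied to the \emph{same} maximum diversity, without any further work to compare different one-parameter maxima. Once this is in place, the remainder is routine bookkeeping: one need only check that the rates $\epsilon(t) = (\log t)^2/t$ and $\epsilon(t) = 1/t$ both satisfy $\log(1/\epsilon(t))/\log t \to 1$, so that the packing/covering asymptotics in $\epsilon$ translate cleanly into asymptotics in $t$.
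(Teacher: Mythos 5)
This theorem is not proved in the paper at all: it is quoted verbatim from Meckes (\cite{MeckesMagnitude2015}, Theorem~7.1), and the only role of Theorem~\ref{thm:main} here is to reconcile Meckes' order-$2$ definition of maximum diversity with $\Dmx{\cdot}$. So there is no in-paper argument to compare against; what you have written is essentially the standard (and essentially Meckes') argument, and its two main estimates are sound: the Cauchy--Schwarz covering bound $\Dmx{tX} \leq N_\epsilon(X)\, e^{2t\epsilon}$ (which in fact holds for \emph{every} probability measure, so you only need $\Dmx{tX} = \sup_\mu D_2^K(\mu)$, not the existence of a simultaneous maximiser), and the packing lower bound via the uniform measure on an $\epsilon$-separated set. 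Note also that the packing bound works just as well with $q = 2$ as with $q = 0$, so the whole proof can be run at a single order; the simultaneous maximisation of Theorem~\ref{thm:main} is not really load-bearing here, which is consistent with the paper's remark that the main theorem merely ``vindicates'' Meckes' definition.

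Two steps need repair. First, the ``if and only if'' clause does \emph{not} follow automatically from the two inequalities you propose to prove: $\liminf \geq \underline{\dim}_{\mathrm{Mink}}(X)$ and $\limsup \leq \overline{\dim}_{\mathrm{Mink}}(X)$ give the forward direction, but if the left-hand limit exists they only sandwich it between the lower and upper dimensions, which does not force those to coincide. You need the equalities $\liminf_{t\to\infty} \log\Dmx{tX}/\log t = \underline{\dim}_{\mathrm{Mink}}(X)$ and $\limsup_{t\to\infty} \log\Dmx{tX}/\log t = \overline{\dim}_{\mathrm{Mink}}(X)$; fortunately your own estimates supply the missing halves, since the covering bound at scale $\epsilon = 1/t$ also gives $\liminf \leq \underline{\dim}_{\mathrm{Mink}}(X)$, and the packing bound along your path $\epsilon(t)$ (which sweeps out all small $\epsilon$, so limsups along it agree with limsups as $\epsilon \to 0$) also gives $\limsup \geq \overline{\dim}_{\mathrm{Mink}}(X)$. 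Second, the assertion that $t\epsilon(t) = (\log t)^2$ ``eventually dominates'' $\log P_{\epsilon(t)}(X)$ is unjustified and can fail: it presupposes roughly that $\log P_\epsilon(X) = o\bigl((\log(1/\epsilon))^2\bigr)$, which breaks down when the upper Minkowski dimension is infinite (e.g.\ $\log P_\epsilon(X) \sim 1/\epsilon$), a case the theorem's ``defined iff defined'' formulation must still handle. The fix is cheap: from $n/(1 + n e^{-t\epsilon}) \geq \tfrac{1}{2}\min\{n, e^{t\epsilon}\}$ you get $\log \Dmx{tX} \geq \min\{\log P_{\epsilon(t)}(X), (\log t)^2\} - \log 2$, and whenever the domination fails this yields $\log\Dmx{tX} \geq (\log t)^2 - \log 2$, which is more than enough for both the liminf and limsup inequalities. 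With those two patches the proof is complete.
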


That is, the Minkowski dimension of $X$ is the growth rate of $\Dmx{tX}$
for large $t$.  Proposition~\ref{prop:preufm-euc} below is a companion
result for the volume of sets $X \subseteq \R^n$:
\[
\lim_{t \to \infty} \frac{\Dmx{tX}}{t^n}
\propto
\textrm{Vol}(X).
\]
Thus, maximum diversity determines dimension and volume.

\paragraph{Entropy and uniform measure}

The maximum diversity theorem implies that every compact metric space $X$
admits a probability measure maximising the entropies of all orders $q$
simultaneously.  Statisticians have long recognised that maximum entropy
distributions are special.  However, the maximum entropy measure on $X$ is
not scale-invariant: if we multiply all distances in $X$ by a constant
factor $t$, the maximising measure changes.

In Section~\ref{S_uniform} we propose a canonical, scale-invariant, choice
of probability measure on a given metric space (subject to conditions), and
call it the \emph{uniform measure}.  It is the limit as $t \to \infty$ of
the maximum entropy measure on $tX$.  We show that in several familiar
cases, this definition captures the intuitive notion of the `obvious'
probability distribution on a space.

\paragraph*{Other notions of entropy} 
There is a vast literature on entropy in geometric contexts.  Here we just
make some brief comments to distinguish entropy in our sense from entropy
in other senses.  

Our entropy is a real invariant of a metric space equipped with a
probability measure.  In contrast, the classical Kolmogorov--Sinai metric
entropy and the related topological entropy of Adler, Bowen, et al.\ are
real invariants of a transformation or flow on a space.  Closer in spirit
is the Kolmogorov $\epsilon$-entropy, which is essentially a simple special
case of our maximum entropy (\cite{LeinsterMaximizing2016},
Section~9). Closer still is differential entropy, which is a real invariant
of a probability density function on a measure space; but unlike our
entropy, it is not defined for an arbitrary probability measure on a metric
space.

\paragraph*{Structure of the paper}

In Section \ref{S_background} we collect various topological and analytic
facts that will be used later. Most of the lemmas in this section
are standard, and the reader may prefer to begin at Section
\ref{S_typicality}.

Sections \ref{S_typicality}, \ref{S_diversity} and \ref{S_magnitude}
introduce our main objects of study---typicality functions, diversity and
entropy, and magnitude---and establish their key properties.  In Section
\ref{S_prep_lemmas} we prove several lemmas and a proposition which form
the scaffolding for the main theorem, proved in Section \ref{S_main}. The
final two sections of the paper specialise from general spaces with
similarities to metric spaces: Section \ref{S_metric} investigates the
relationship between maximum diversity and magnitude, and in Section
\ref{S_uniform} we discuss our definition of the uniform measure on a
compact metric space. A number of open questions are outlined in Section
\ref{S_conjectures}.

\paragraph*{Conventions} 
Throughout, a \demph{measure} on a topological space means a Radon
measure. All measures are positive by default. A function
\(f: \R \to \R\) is \demph{increasing} if \(f(y) \leq f(x)\) for
all \(y \leq x\), and 
\demph{decreasing} 
similarly.

\paragraph*{Acknowledgements} 
We thank Mark Meckes for many useful conversations, and especially for
allowing us to include Proposition~\ref{prop:euc-unique}, which is due to
him.  Thanks also to Christina Cobbold for helpful suggestions.  TL was
partially supported by a Leverhulme Research Fellowship
(RF-2019-519$\backslash$9).


\section{Topological and analytic preliminaries}
\label{S_background}

\paragraph*{Spaces of functions}

For topological spaces $X$ and $Y$, let \(\cn{Top}(X,Y)\) denote the set of
continuous maps from $X$ to $Y$.

When \(X\) is compact and \(Y\) is a metric space, the compact-open
topology and the topology of uniform convergence on \(\cn{Top}(X,Y)\)
coincide.  (This follows, for example, from Theorems~46.7 and~46.8 in
\cite{MunkresTopology2000}.)  We will be exclusively concerned with cases
where $X$ is compact and $Y$ is metric, and we will always understand
$\cn{Top}(X, Y)$ to be equipped with this topology. In particular, $C(X) =
\cn{Top}(X, \R)$ has the topology induced by the uniform norm
$\|\cdot\|_\infty$.

%
%

\begin{lem}\label{lem:uniform_expbl}
Let \(X\) be any topological space, \(Y\) a compact Hausdorff space, and
\(Z\) a metric space.  A map \(f: X \times Y \to Z\) is continuous if and
only if the map \(\overline{f}: X \to \cn{Top}(Y,Z)\) given by
\(\overline{f}(x)(y) = f(x,y)\) is continuous.
\end{lem}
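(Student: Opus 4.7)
The plan is to prove both implications directly, using the identification (noted in the paper just before the lemma) of the topology on $\cn{Top}(Y,Z)$ with the topology of uniform convergence. The compactness and Hausdorffness of $Y$ enter through the tube lemma in the harder direction.

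For the forward implication, suppose $f: X \times Y \to Z$ is continuous. Fix $x_0 \in X$ and $\epsilon > 0$; I want a neighbourhood $U$ of $x_0$ such that $\sup_{y \in Y} d_Z(f(x,y), f(x_0, y)) < \epsilon$ for all $x \in U$. For each $y \in Y$, continuity of $f$ at $(x_0, y)$ yields an open box $U_y \times V_y \ni (x_0, y)$ on which $d_Z(f(\cdot, \cdot), f(x_0, y)) < \epsilon/2$. Since $Y$ is compact, finitely many $V_{y_1}, \ldots, V_{y_n}$ cover $Y$. The tube lemma (which uses compactness of $Y$) then gives an open $U \ni x_0$ with $U \times Y \subseteq \bigcup_i U_{y_i} \times V_{y_i}$. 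For $x \in U$ and $y \in Y$, pick $i$ with $y \in V_{y_i}$; then both $(x,y)$ and $(x_0, y)$ lie in $U_{y_i} \times V_{y_i}$, so the triangle inequality gives $d_Z(f(x,y), f(x_0,y)) < \epsilon$. Hence $\|\overline{f}(x) - \overline{f}(x_0)\|_\infty \leq \epsilon$ on $U$, which is exactly continuity of $\overline{f}$ at $x_0$ in the uniform topology.

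For the converse, suppose $\overline{f}$ is continuous. Fix $(x_0, y_0) \in X \times Y$ and $\epsilon > 0$. Since $\overline{f}(x_0) \in \cn{Top}(Y, Z)$ is continuous at $y_0$, there is an open $V \ni y_0$ with $d_Z(f(x_0, y), f(x_0, y_0)) < \epsilon/2$ for $y \in V$. Continuity of $\overline{f}$ at $x_0$ produces an open $U \ni x_0$ with $\|\overline{f}(x) - \overline{f}(x_0)\|_\infty < \epsilon/2$ for $x \in U$, i.e.\ $d_Z(f(x,y), f(x_0, y)) < \epsilon/2$ for all $y \in Y$ simultaneously. For $(x,y) \in U \times V$, the triangle inequality gives $d_Z(f(x,y), f(x_0, y_0)) < \epsilon$, showing $f$ is continuous at $(x_0, y_0)$.

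The main obstacle is the forward direction: one needs a uniform-in-$y$ estimate for $d_Z(f(x,y), f(x_0,y))$ on a neighbourhood of $x_0$, without any metric or even first-countability assumption on $X$. This is exactly what the tube lemma delivers, and it is the only place compactness of $Y$ is used in an essential way. Hausdorffness of $Y$ is invoked only implicitly, via the standard statement of the tube lemma. The converse direction is purely formal from the definition of the uniform topology and the triangle inequality.
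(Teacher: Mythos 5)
Your proof is correct in substance, but it takes a genuinely different route from the paper: the paper disposes of this lemma in one line by citing the exponential law for the compact-open topology (Borceux, Proposition~7.1.5), combined with the identification, recorded just before the lemma, of the compact-open topology with the topology of uniform convergence when the domain is compact and the codomain is metric. You instead give a self-contained $\epsilon$-argument in the uniform topology: the backward direction is the formal triangle-inequality computation, and the forward direction extracts a uniform-in-$y$ estimate from compactness of $Y$ via a finite subcover. What your route buys is independence from the cited reference, and it also makes visible that Hausdorffness of $Y$ plays no essential role (only compactness does); what the paper's route buys is brevity and the fact that the compact-open formulation is the standard one. One small repair is needed in your forward direction: the tube lemma is both unnecessary and, as you invoke it, does not justify your next sentence. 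Knowing only $U \times Y \subseteq \bigcup_i U_{y_i} \times V_{y_i}$, you cannot ``pick $i$ with $y \in V_{y_i}$'' and conclude $x \in U_{y_i}$; you must either choose $i$ so that $(x,y) \in U_{y_i} \times V_{y_i}$ (and then $(x_0,y)$ lies in the same box automatically, since $x_0 \in U_{y_j}$ for every $j$), or, more simply, take $U = \bigcap_{i=1}^n U_{y_i}$, which makes the tube lemma superfluous. Finally, since $Z$ is merely a metric space, the displayed quantity should be $\sup_{y \in Y} d_Z\bigl(f(x,y), f(x_0,y)\bigr)$ rather than a norm of a difference; with these cosmetic changes the argument is complete.
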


\begin{proof}
This follows from the standard properties of the compact-open topology
(\cite{BorceuxHandbook1994a}, Proposition~7.1.5).
\end{proof}

We will make repeated use of the following elementary fact.

\begin{lem}\label{lem:uniform_functorial}
Let \(X\) be a compact topological space, \(Y\) and \(Y'\) metric spaces,
and \(\phi: Y \to Y'\) a continuous function. Then the induced map
\[
\phi \circ - : \cn{Top}(X, Y) \to \cn{Top}(X, Y')
\]
is continuous.
\end{lem}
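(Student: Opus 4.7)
The plan is to reduce everything to the topology of uniform convergence. By the statement already recorded just before this lemma, for a compact domain $X$ and a metric codomain the compact-open topology on $\cn{Top}(X,-)$ is the same as the uniform-convergence topology, so it suffices to show: for every continuous $f : X \to Y$ and every $\epsilon > 0$ there exists $\delta > 0$ such that any continuous $g : X \to Y$ with $\sup_{x \in X} d_Y(g(x), f(x)) < \delta$ satisfies $\sup_{x \in X} d_{Y'}(\phi(g(x)), \phi(f(x))) < \epsilon$.

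The key observation is that the image $f(X)$ is a compact subset of $Y$, since $X$ is compact and $f$ is continuous. For each $y_0 \in f(X)$, the continuity of $\phi$ at $y_0$ produces a radius $\delta(y_0) > 0$ such that $d_Y(y, y_0) < \delta(y_0)$ implies $d_{Y'}(\phi(y), \phi(y_0)) < \epsilon/2$. The open balls $B(y_0, \delta(y_0)/2)$ cover $f(X)$, so compactness yields finitely many points $y_1, \ldots, y_N \in f(X)$ with $f(X) \subseteq \bigcup_i B(y_i, \delta_i/2)$, where $\delta_i := \delta(y_i)$. Set $\delta := \tfrac{1}{2} \min_i \delta_i$.

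The verification is then a short triangle-inequality calculation. Given $g$ with $\|g - f\|_\infty < \delta$ and any $x \in X$, choose $i$ so that $f(x) \in B(y_i, \delta_i/2)$; then $d_Y(g(x), y_i) < \delta + \delta_i/2 \leq \delta_i$, so both $d_{Y'}(\phi(f(x)), \phi(y_i)) < \epsilon/2$ and $d_{Y'}(\phi(g(x)), \phi(y_i)) < \epsilon/2$, giving $d_{Y'}(\phi(g(x)), \phi(f(x))) < \epsilon$.

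The only subtle point — and hence the main (modest) obstacle — is that we cannot simply invoke uniform continuity of $\phi$ either on all of $Y$ or on a closed $\delta$-neighbourhood of $f(X)$: neither need hold, since $Y$ need not be locally compact. The finite subcover argument sidesteps this by fixing uniformly sized neighbourhoods \emph{before} the perturbation $g$ is introduced, so that the $\delta$-perturbation can be absorbed into a slightly enlarged ball around one of the finitely many reference points $y_i$.
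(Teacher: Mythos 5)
Your proof is correct, and it is worth noting at the outset that the paper supplies no proof of this lemma at all: it is stated as an elementary fact, so there is no argument of the authors' to measure yours against. Your reduction to the uniform-convergence topology is justified by the identification of the compact-open and uniform topologies recorded at the start of the section, and the finite-subcover argument correctly proves the key quantitative fact, namely that for the compact set $f(X) \subseteq Y$ and any $\epsilon > 0$ there is a $\delta > 0$ such that $y_0 \in f(X)$ and $d_Y(y, y_0) < \delta$ force $d_{Y'}(\phi(y), \phi(y_0)) < \epsilon$; you are also right that one cannot simply cite uniform continuity of $\phi$ on $Y$ or on a closed neighbourhood of $f(X)$, since neither set need be compact. (A cosmetic point: your pointwise estimate yields $\sup_{x} d_{Y'}(\phi(g(x)), \phi(f(x))) \leq \epsilon$ rather than a strict inequality, which of course suffices for continuity.) For comparison, there is a shorter route that the authors may have had in mind: postcomposition by $\phi$ is continuous for the compact-open topologies on $\cn{Top}(X,Y)$ and $\cn{Top}(X,Y')$ for \emph{arbitrary} topological spaces $X$, $Y$, $Y'$, because the preimage of a subbasic open set $\{h \st h(C) \subseteq U\}$ (with $C \subseteq X$ compact, $U \subseteq Y'$ open) is exactly the subbasic open set $\{g \st g(C) \subseteq \phi^{-1}(U)\}$; the lemma as stated then follows immediately from the quoted identification of the two topologies. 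That argument avoids all metric estimates, whereas your $\epsilon$--$\delta$ proof is self-contained at the level of uniform convergence; both are complete and standard.
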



\paragraph*{Spaces of measures}

\glob{From now until Definition \ref{def_powermean}, let $X$ denote a
compact Hausdorff space.}  Equip the vector space $C(X)$ with the norm
$\|\cdot\|_\infty$.  The Riesz representation theorem identifies its
topological dual $C(X)^*$ with the space $M(X)$ of finite signed measures
on \(X\).  The dual norm on $M(X)$ is the total variation norm, $\|\mu\| =
|\mu|(X)$, and the dual pairing is
\begin{equation}
\label{eq:pairing}
\begin{array}{cccc}
\langle -, - \rangle:   &C(X) \times M(X)       &\to    &\R,            \\
                        &(f, \mu)               &\mapsto&\int_X f \d\mu.
\end{array}
\end{equation}
We will always understand $M(X)$ and its subsets to be equipped with the
weak* topology.  Denote by $P(X)$ the set of probability measures on $X$,
and by $P_\leq(X)$ the set of measures $\mu$ such that $\mu(X) \leq 1$.  By
the Banach--Alaoglu theorem, $P(X)$ and $P_\leq(X)$ are compact Hausdorff.

The pairing map~\eqref{eq:pairing} is not in general continuous.  However:

\begin{lem}\label{lem:pairing_cts}
Let \(Q\) be a closed bounded subset of \(M(X)\). Then:
\begin{enumerate}[(i)]
\item 
\label{part:pairing_cts_1}
the assignment \(f \mapsto \langle
f, - \rangle\) defines a continuous map \(C(X) \to C(Q)\);

\item 
\label{part:pairing_cts_2}
the restricted pairing map \(\langle -, - \rangle: C(X) \times Q
\to \R\) is continuous.
\end{enumerate}
\end{lem}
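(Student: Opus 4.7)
The plan is to handle (i) directly and then deduce (ii) by exponential transpose via the earlier Lemma~\ref{lem:uniform_expbl}. Throughout, observe first that $Q$ is weak* compact Hausdorff: it is weak* closed by hypothesis, and contained in some ball $\{\mu : \|\mu\| \leq R\}$ which is weak* compact by the Banach--Alaoglu theorem, so $Q$ is a weak* closed subset of a weak* compact space.

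For part (i), I would first check that $\langle f, -\rangle$ really belongs to $C(Q)$ for each $f\in C(X)$: by the very definition of the weak* topology on $M(X)$, every $\mu \mapsto \langle f, \mu\rangle$ is weak* continuous on $M(X)$ and hence on $Q$. Next, to show that $f \mapsto \langle f, -\rangle$ is continuous from $C(X)$ with $\|\cdot\|_\infty$ to $C(Q)$ with the uniform norm, I would estimate
\[
\bigl\| \langle f, -\rangle - \langle g, -\rangle \bigr\|_{C(Q)}
= \sup_{\mu \in Q} \Bigl| \int (f-g) \d\mu \Bigr|
\leq \|f-g\|_\infty \sup_{\mu \in Q} \|\mu\|
\leq R\,\|f-g\|_\infty.
\]
Thus the assignment is in fact $R$-Lipschitz (and linear), hence continuous.

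For part (ii), I would invoke Lemma~\ref{lem:uniform_expbl} with the roles $X \leadsto C(X)$ (any topological space), $Y \leadsto Q$ (compact Hausdorff, as noted above), and $Z \leadsto \R$ (metric). The map whose continuity we want is $\langle -, -\rangle: C(X) \times Q \to \R$, and its exponential transpose is precisely the assignment $f \mapsto \langle f, -\rangle : C(X) \to \cn{Top}(Q, \R) = C(Q)$, which we just proved to be continuous in (i). By the lemma, continuity of the transpose is equivalent to continuity of the original map, so (ii) follows.

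The only real obstacle is the initial sanity check that $Q$ is weak* compact Hausdorff, which is needed to legitimately apply Lemma~\ref{lem:uniform_expbl}; once that is in place, part (i) is a one-line norm estimate and part (ii) is formal via the exponential transpose.
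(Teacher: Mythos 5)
Your proposal is correct and follows essentially the same route as the paper: part (i) via the norm estimate $\sup_{\mu \in Q} |\langle f-g, \mu\rangle| \leq \|f-g\|_\infty \sup_{\mu \in Q} \|\mu\|$ together with weak* continuity of each $\langle f,-\rangle$, and part (ii) by applying Lemma~\ref{lem:uniform_expbl} using that $Q$ is compact Hausdorff by Banach--Alaoglu. The only difference is cosmetic: you spell out the Lipschitz constant $R$ and the compactness check slightly more explicitly than the paper does.
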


\begin{proof}
For (\ref{part:pairing_cts_1}), first note that for each \(f \in C(X)\),
the map \(\langle f, - \rangle : Q \to \R\) is continuous, by definition of
the weak* topology. To show that the resulting map \(C(X) \to C(Q)\) is
continuous, let \(f,g \in C(X)\). Then
\[
\|\langle f, - \rangle - \langle g, - \rangle \|_\infty 
= 
\sup_{\mu \in Q} |\langle f - g, \mu \rangle|
\leq 
\|f-g\|_\infty \sup_{\mu \in Q} \|\mu\|,
\] 
and \(\sup_{\mu \in Q} \|\mu\|\) is finite as \(Q\) is bounded.

Part (\ref{part:pairing_cts_2}) follows from Lemma~\ref{lem:uniform_expbl},
since \(Q\) is compact (by the Banach--Alaoglu theorem) and Hausdorff.
\end{proof}

\paragraph*{Supports}

The \demph{support} of a function \(f: X \to [0, \infty)\) is $\supp f =
f^{-1}(0, \infty)$.  Note that we use this set rather than its closure.

Every measure $\mu$ on $X$ has a \demph{support} $\supp \mu$, which is the
smallest closed set satisfying \(\mu(X \setminus \supp \mu) = 0\).  (Recall
our convention that `measure' means `positive Radon measure', and see, for
instance, Chapter~III, \S2, No.~2 of~\cite{BourbakiIntegration1965}.)  It
is characterised by
\[
\supp \mu 
= 
\{x \in X \st 
\mu(U) > 0 \text{ for all open neighbourhoods \(U\) of \(x\)}\},
\] 
and has the property that $\int_X f \d\mu = \int_{\supp \mu} f \d\mu $ for
all $f \in L^1(X, \mu)$.

\begin{lem}
\label{lem:disjoint_supps}
Let \(\mu\) be a measure on \(X\),and let \(f: X \to [0,\infty)\) be a
continuous function. Then
$
\supp f \cap \supp \mu \neq \emptyset 
\iff 
\int_X f \d\mu > 0.
$
\end{lem}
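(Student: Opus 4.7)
The plan is to prove the two directions separately, using the defining properties of the two notions of support collected just above the lemma.

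For the forward implication, I would pick a point $x \in \supp f \cap \supp \mu$. By definition $f(x) > 0$, so by continuity of $f$ there is an open neighbourhood $U$ of $x$ on which $f \geq f(x)/2$. Since $x \in \supp \mu$, the characterisation of $\supp \mu$ recalled in the text gives $\mu(U) > 0$. Bounding the integral below by the integral over $U$,
\[
\int_X f \, d\mu \;\geq\; \int_U f \, d\mu \;\geq\; \tfrac{f(x)}{2} \, \mu(U) \;>\; 0.
\]

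For the converse I would argue by contrapositive. Assume $\supp f \cap \supp \mu = \emptyset$. Then $f$ vanishes identically on $\supp \mu$, since any point of $\supp \mu$ lies outside $\supp f = f^{-1}(0, \infty)$ and hence has $f$-value $0$. Using the property of $\supp \mu$ stated in the paragraph preceding the lemma (that integrals against $\mu$ can be restricted to $\supp \mu$), we get
\[
\int_X f \, d\mu \;=\; \int_{\supp \mu} f \, d\mu \;=\; 0.
\]

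There is no real obstacle here; the only subtlety worth flagging is that the proof uses $\supp f$ in the non-closed form $f^{-1}(0, \infty)$ adopted in the text. In the forward direction this guarantees $f(x) > 0$ strictly, which is what powers the continuity argument; in the converse direction it ensures $f$ actually equals zero (not merely vanishes in a limit) at each point of $\supp \mu$, so the integral over $\supp \mu$ is zero on the nose.
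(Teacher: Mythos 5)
Your proof is correct, and it follows the standard route: the paper disposes of the forward implication by citing Proposition~9 in Chapter~III, \S2, No.~3 of Bourbaki's \emph{Int\'egration} and calls the backward implication trivial, and your neighbourhood argument (using $f(x)>0$, continuity, and the characterisation of $\supp\mu$) together with the restriction property $\int_X f \d\mu = \int_{\supp\mu} f \d\mu$ is exactly the elementary content behind those two steps. So this is essentially the same approach, just written out in full rather than delegated to the reference.
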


\begin{proof}
The forwards implication is Proposition~9 in Chapter~III, \S2, No.~3
of~\cite{BourbakiIntegration1965}, and the backwards implication is trivial.
\end{proof}

\paragraph*{Approximations to the identity}

Later, we will want to approximate Dirac measures $\delta_x$ by probability
measures that are absolutely continuous with respect to some fixed measure
$\mu$.  We will use:

\begin{lem}
\label{lem:approximate_delta}
Let \(\mu\) be a measure on \(X\) and \(x \in \supp \mu\). For each
equicontinuous set of functions \(E \subseteq C(\supp \mu)\) and each
\(\epsilon > 0\), there exists a nonnegative function \(u \in C(X)\) such
that \(u\mu\) is a probability measure and for all \(f \in E\),
\[
\left| \int_X f \d(u\mu) - f(x) \right| \leq \epsilon.
\]
\end{lem}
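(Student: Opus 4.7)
The plan is to construct $u$ by Urysohn's lemma, starting from a continuous bump function concentrated in a small neighbourhood of $x$ and normalising so that $u\mu$ has total mass one.

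First I would apply the equicontinuity of $E$ at the point $x$: given $\epsilon > 0$, there exists an open neighbourhood $V$ of $x$ in $\supp \mu$ such that $|f(y) - f(x)| \leq \epsilon$ for every $f \in E$ and every $y \in V$. Since $\supp \mu$ carries the subspace topology inherited from $X$, one can write $V = W \cap \supp \mu$ for some open set $W \subseteq X$ with $x \in W$.

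Next, because $X$ is compact Hausdorff and therefore normal, Urysohn's lemma applied to the disjoint closed sets $\{x\}$ and $X \setminus W$ produces a continuous function $u_0 \colon X \to [0,1]$ with $u_0(x) = 1$ and $u_0 \equiv 0$ outside $W$. Continuity of $u_0$ together with $u_0(x) = 1$ yields an open neighbourhood $U$ of $x$ in $X$ on which $u_0 > \tfrac12$, and since $x \in \supp \mu$ we have $\mu(U) > 0$, so
\[
\int_X u_0 \d\mu \geq \tfrac12 \mu(U) > 0.
\]
I would then set $u = u_0 / \int_X u_0 \d\mu$. This lies in $C(X)$, is nonnegative, and by construction $u\mu$ is a probability measure. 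Moreover, $u$ vanishes on $X \setminus W$.

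To verify the estimate, for $f \in E$ I would use $\int_X u \d\mu = 1$ to write
\[
\int_X f \d(u\mu) - f(x) = \int_X \bigl(f - f(x)\bigr) u \d\mu.
\]
The integral on the right reduces to one over $\supp \mu$ (a property of the support recalled just before the statement), and on this set $u$ vanishes outside $W \cap \supp \mu = V$, where $|f - f(x)| \leq \epsilon$ uniformly over $f \in E$. Taking absolute values inside the integral therefore gives the bound $\epsilon \int_X u \d\mu = \epsilon$.

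The argument is essentially routine; the only mild subtlety is to pass from the equicontinuity neighbourhood, which is intrinsic to $\supp \mu$, to an open subset $W$ of the ambient space $X$, so that Urysohn's lemma can be applied to a closed set in $X$. This is automatic from the definition of the subspace topology, and the rest is a standard bump-and-normalise construction.
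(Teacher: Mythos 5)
Your proof is correct and follows essentially the same bump-and-normalise strategy as the paper: choose an equicontinuity neighbourhood of $x$, take a Urysohn bump function concentrated there, normalise so that $u\mu$ is a probability measure, and estimate $\int (f - f(x))\,u \,\mathrm{d}\mu$ over that neighbourhood. The only difference is minor: you apply Urysohn directly in the normal ambient space $X$ and verify $\int_X u_0 \,\mathrm{d}\mu > 0$ by hand via the level set $\{u_0 > \tfrac12\}$, whereas the paper works inside $\supp\mu$, uses Lemma~\ref{lem:disjoint_supps} for the positivity, and then extends by Tietze --- your route simply spares the Tietze step.
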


\begin{proof}
By equicontinuity, we can choose a subset \(U \subseteq \supp \mu\),
containing \(x\) and open in \(\supp \mu\), such that \(|f(y) - f(x)| \leq
\epsilon\) for all \(y \in U\) and \(f \in E\).

By Urysohn's lemma, we can choose a nonnegative function \(u \in C(\supp
\mu)\) such that \(\supp u \subseteq U\) and \(u(x) > 0\). Then
\(\int_{\supp \mu} u \d\mu > 0\), so by rescaling we can arrange that
\(\int_{\supp\mu} u \d\mu = 1\).

By Tietze's theorem, \(u\) can be extended to a nonnegative
continuous function on \(X\), and then \(u\mu\) is a probability measure on
\(X\). Moreover, for all \(f \in E\),
\[
\left| \int_X f \d(u \mu) - f(x) \right| 
= 
\left| \int_U \bigl(f(y) - f(x)\bigr) u(y) \d\mu(y) \right|       
\leq 
\epsilon \int_U u(y) \d\mu(y) 
= 
\epsilon,
\]
as required.
\end{proof}

We will also want to approximate any probability measure on $\R^n$
by measures that are absolutely continuous with respect to Lebesgue measure
$\lambda$.  We do this in the following standard way.  Let $G \in
L^1(\R^n)$ with $\int G = 1$.  Define functions $(G_t)_{t > 0}$ on $\R^n$
by $G_t(x) = t^n G(tx)$.  Then $G_t \in L^1(\R^n)$ and $\int G_t = 1$ for
every $t$.  The convolution $G_t * \mu$ with any finite signed measure
$\mu$ on $\R^n$ also belongs to $L^1(\R^n)$ (Proposition~8.49
of~\cite{FollandReal1999}).

\begin{lem}
\label{lem:approx-conv}
Let $G \in L^1(\R^n)$ with $\int_{\R^n} G \d\lambda = 1$, and let $f \in
C(\R^n)$ be a function of bounded support.  Then for all probability
measures $\mu$ on $\R^n$,
\[
\int_{\R^n} f \cdot (G_t * \mu) \d\lambda 
\to
\int_{\R^n} f \d\mu
\quad
\text{ as } t \to \infty,
\]
uniformly in $\mu$.
\end{lem}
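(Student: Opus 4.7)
The plan is to reduce the statement to a classical approximation-to-the-identity argument, where the key observation is that the bound can be made independent of the probability measure $\mu$.

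First I would apply Fubini's theorem to rewrite the integral. Since $G_t(x-y)$ is jointly measurable, $f$ is bounded with bounded support, and $\int \int_{\supp f} |G_t(x-y)| \d\lambda(x) \d\mu(y) \leq \|f\|_\infty \|G\|_1 < \infty$, Fubini gives
\[
\int_{\R^n} f \cdot (G_t * \mu) \d\lambda
=
\int_{\R^n} \left( \int_{\R^n} f(y+u) G_t(u) \d\lambda(u) \right) \d\mu(y).
\]
Next, using the defining rescaling $G_t(u) = t^n G(tu)$ and the substitution $v = tu$, the inner integral becomes $\int f(y + v/t) G(v) \d\lambda(v)$. Since $\mu$ is a probability measure and $\int G \d\lambda = 1$, we may subtract $f(y) = \int f(y) G(v) \d\lambda(v)$ and $\int f \d\mu$ inside the corresponding integrals to get
\[
\int f \cdot (G_t * \mu) \d\lambda - \int f \d\mu
=
\int_{\R^n} \int_{\R^n} \bigl( f(y + v/t) - f(y) \bigr) G(v) \d\lambda(v) \d\mu(y).
\]

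The heart of the argument is then to bound the inner integral uniformly in $y$. Because $f$ is continuous with bounded support, it is uniformly continuous on $\R^n$. Given $\epsilon > 0$, choose $\delta > 0$ so that $|f(y+w) - f(y)| < \epsilon$ whenever $|w| < \delta$, and split the integral over $v$ at $|v| = t\delta$. On the region $|v| < t\delta$, the integrand is bounded in absolute value by $\epsilon |G(v)|$, contributing at most $\epsilon \|G\|_1$. On the complementary region, $|f(y+v/t) - f(y)| \leq 2 \|f\|_\infty$, so this part contributes at most $2 \|f\|_\infty \int_{|v| \geq t\delta} |G(v)| \d\lambda(v)$, which tends to $0$ as $t \to \infty$ by dominated convergence since $G \in L^1(\R^n)$. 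Crucially, both bounds are independent of $y$.

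Combining these estimates, for every $\epsilon > 0$,
\[
\left| \int f(y+v/t) G(v) \d\lambda(v) - f(y) \right|
\leq
\epsilon \|G\|_1 + 2\|f\|_\infty \int_{|v| \geq t\delta} |G(v)| \d\lambda(v)
\]
uniformly in $y \in \R^n$. Since $\mu$ is a probability measure, integrating against $\mu$ preserves the bound, and it is independent of $\mu$. Letting $t \to \infty$ and then $\epsilon \to 0$ yields the stated uniform convergence. The only place requiring any care is the initial application of Fubini; once that is in place, the argument is the standard mollifier estimate, whose uniformity over probability measures follows for free from the normalisation $\mu(\R^n) = 1$ and the fact that the pointwise bound does not see $y$.
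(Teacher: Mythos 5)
Your proof is correct and follows essentially the same route as the paper: your Fubini rewriting is exactly the paper's identity $\int f \cdot (G_t * \mu) \d\lambda - \int f \d\mu = \int (f * \tilde{G}_t - f)\d\mu$, and the uniformity in $\mu$ comes, as in the paper, from bounding by $\|f * \tilde{G}_t - f\|_\infty$ since $\mu$ is a probability measure. The only difference is that you prove the classical uniform convergence of the mollification of a uniformly continuous $f$ by hand (splitting at $|v| = t\delta$), where the paper simply cites Folland's Theorem~8.14(b); both are fine.
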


\begin{proof}
Define $\tilde{G} \in L^1(\R^n)$ by $\tilde{G}(x) = G(-x)$.  It is
elementary that
\[
\int_{\R^n} f \cdot (G_t * \mu) \d\lambda 
-
\int_{\R^n} f \d\mu
=
\int_{\R^n} \bigl( f * \tilde{G}_t - f \bigr) \d\mu
\]
for all finite signed measures $\mu$ on $\R^n$.  Hence when $\mu$ is a
probability measure,
\[
\biggl|
\int_{\R^n} f \cdot (G_t * \mu) \d\lambda 
-
\int_{\R^n} f \d\mu
\biggr|
\leq
\bigl\| f * \tilde{G}_t - f \bigr\|_\infty 
\to 0
\]
as $t \to \infty$, by Theorem~8.14(b) of~\cite{FollandReal1999}.
\end{proof}

\paragraph*{Integral power means}

Here we review the theory of power means of a
real-valued function on an arbitrary probability space $(X, \mu)$.

A function \(f: X \to [0,\infty)\) is \demph{essentially bounded} if
  \(\esssup_\mu(f)\) is finite.

\begin{defn}\label{def_powermean}
Let \((X, \mu)\) be a probability space and let $f: X \to [0, \infty)$ be a
measurable function such that both $f$ and $1/f$ are essentially
bounded.  We define for each $t \in [-\infty, \infty]$ a real number 
$
M_t(\mu, f) \in (0, \infty)$,
the \demph{power mean of \(f\) of order \(t\), weighted by \(\mu\)}, by
\begin{align}
\label{powermean_def_1}
M_t(\mu,f) = \left(\int_X f^t \d\mu \right)^{1/t}
\end{align}
when $t \in (-\infty, 0) \cup (0, \infty)$, and in the remaining cases by 
\begin{align*}
M_0(\mu,f)      &
= 
\exp\left(\int_X \log f\d\mu\right), \\
M_{\infty} (\mu, f)    & 
= 
\esssup_\mu f,  \\
M_{-\infty} (\mu,f)     &
= 
\essinf_\mu f.
\end{align*}
\end{defn}

In the case of a finite set $X = \{1, \ldots, n\}$, the mean of order $0$
is the classical weighted geometric mean $\prod_{i = 1}^n f(i)^{\mu\{i\}}$.

\begin{rmk}\label{rmk:powermean_dual}
We have made the assumption that $f$ and $1/f$ are essentially bounded, or
equivalently that $\essinf_\mu(f) > 0$ and $\esssup_\mu(f) < \infty$.  This
guarantees that $f^t \in L^1(X, \mu)$ for all $t \in (-\infty, \infty)$ and
that $M_t(\mu, f) \in (0, \infty)$ for all $t \in [-\infty, \infty]$.  If
$f$ satisfies our assumption then so does $1/f$, and we have a duality
formula: 
\[
M_{-t}(\mu,f) 
= 
\frac{1}{M_t(\mu, 1/f)}.
\]
\end{rmk}

\begin{prop}\label{prop:powermean_mono}
Let \((X, \mu)\) be a probability space and let $f: X \to [0, \infty)$ be a
measurable function such that both $f$ and $1/f$ are essentially
bounded.  
\begin{enumerate}[(i)]
\item 
\label{part:pm-const}
If there is some constant $c$ such that $f(x) = c$ for almost all $x \in
X$, then $M_t(\mu, f) = c$ for all $t \in [-\infty, \infty]$.

\item
\label{part:pm-strict}
Otherwise, $M_t(\mu, f)$ is strictly increasing in $t \in [-\infty,
  \infty]$. 
\end{enumerate}
\end{prop}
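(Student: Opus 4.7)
The plan is to reduce part (ii) to Jensen's inequality applied to suitable strictly convex functions, combined with the duality formula of Remark~\ref{rmk:powermean_dual}. Part (i) is immediate from direct substitution in each of the four cases of Definition~\ref{def_powermean}. For part (ii), I would begin with the core case $0 < s < t < \infty$, applying Jensen to the strictly convex function $\phi(u) = u^{t/s}$ on $(0, \infty)$ evaluated at $f^s$; the latter is essentially bounded and therefore lies in $L^1(X, \mu)$. This yields
\[
\int_X (f^s)^{t/s} \d\mu \;\geq\; \left( \int_X f^s \d\mu \right)^{t/s},
\]
strictly because $f^s$ (like $f$) is not $\mu$-a.e.\ constant. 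Raising to the $1/t$ power gives $M_s(\mu, f) < M_t(\mu, f)$.

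To include the endpoint $t = 0$ at finite positive orders, I would apply Jensen's inequality to the strictly convex function $\phi(u) = e^u$ and the essentially bounded function $t \log f$ (essentially bounded because both $f$ and $1/f$ are), obtaining $M_0(\mu, f) < M_t(\mu, f)$ for $t > 0$. Negative orders are then handled via the duality formula $M_r(\mu, f) = 1/M_{-r}(\mu, 1/f)$: since $1/f$ satisfies the same hypotheses and is non-constant iff $f$ is, every strict inequality between positive orders for $1/f$ transfers---with direction reversed by the reciprocal---to a strict inequality between the corresponding negative orders for $f$. The mixed case $s < 0 < t$ then follows by chaining $M_s(\mu, f) < M_0(\mu, f) < M_t(\mu, f)$.

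For the infinite endpoints, set $M = \esssup_\mu f$ and $m = \essinf_\mu f$; the hypothesis that $f$ is not a.e.\ constant gives $m < M$. For finite $t > 0$, the set $\{f < M\}$ has positive $\mu$-measure, so $\int_X f^t \d\mu < M^t$ and hence $M_t(\mu, f) < M = M_\infty(\mu, f)$. The inequality $M_{-\infty}(\mu, f) < M_t(\mu, f)$ for finite $t < 0$ follows by applying this bound to $1/f$ and using duality; the remaining pairs involving $\pm\infty$ (for instance, $-\infty$ against positive $t$, or $t = 0$ against $\pm\infty$) are recovered by chaining with the inequalities established above.

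The main obstacle will be the careful bookkeeping of the equality cases in Jensen's inequality, to ensure that \emph{strict} inequality propagates through each reduction step. In particular one must check that ``$f$ is not a.e.\ constant'' implies the same for the auxiliary functions ($f^s$, $\log f$, $1/f$) to which Jensen is applied; this holds because the relevant transformations ($u \mapsto u^s$ for $s \neq 0$, $u \mapsto \log u$, $u \mapsto 1/u$) are strictly monotone on $(0, \infty)$. No single step is hard, but the patchwork of cases---positive, negative, zero, and infinite---needs to be assembled consistently so that strictness holds for every pair $s < t$ in $[-\infty, \infty]$.
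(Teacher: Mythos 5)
Your proposal is correct, but it takes a different route from the paper: the paper's proof of part~(\ref{part:pm-strict}) is essentially a citation, deferring to Section~6.11 of Hardy--Littlewood--P\'olya (which treats the case of a real interval with a density) together with the remark that the argument extends unchanged to an arbitrary probability space, whereas you give a self-contained argument directly in the general setting. Your decomposition---strict Jensen with $u \mapsto u^{t/s}$ for $0 < s < t$, strict Jensen with $u \mapsto e^u$ against order $0$, the duality $M_{-t}(\mu,f) = 1/M_t(\mu,1/f)$ of Remark~\ref{rmk:powermean_dual} to transfer everything to negative orders, and the positive-measure-of-$\{f < M\}$ argument at $\pm\infty$---is sound, and your bookkeeping of strictness is right: the key points are exactly that $f^s$, $\log f$ and $1/f$ inherit non-constancy from $f$, that strict convexity forces strict Jensen for non-constant integrands, and that $m < M$ when $f$ is not a.e.\ constant, all of which you address. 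Note that your chaining step $M_s < M_0 < M_t$ for $s < 0 < t$ implicitly uses the duality formula at order $0$, i.e.\ $M_0(\mu, f) = 1/M_0(\mu, 1/f)$; this is covered by Remark~\ref{rmk:powermean_dual} but is worth stating when you assemble the cases. What your approach buys is a proof readable without consulting the classical reference and with the measure-theoretic strictness and endpoint cases made explicit; what the paper's approach buys is brevity, at the cost of leaving the ``extends without substantial change'' claim to the reader.
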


\begin{proof}
Part~(\ref{part:pm-const}) is trivial.  Part~(\ref{part:pm-strict}) is
proved in Section~6.11 of~\cite{HardyInequalities1952} in the case where
$X$ is a real interval and $\mu$ is determined by a density function, and
the proof extends without substantial change to an arbitrary probability
space.
\end{proof}

\begin{prop}\label{prop:powermean_cts}
Let \((X, \mu)\) be a probability space and let $f: X \to [0, \infty)$ be a
measurable function such that both $f$ and $1/f$ are essentially
bounded.  Then $M_t(\mu, f)$ is continuous in $t \in [-\infty, \infty]$.
\end{prop}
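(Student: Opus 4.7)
The plan is to establish continuity on the two open half-lines $(-\infty, 0)$ and $(0, \infty)$ by an application of dominated convergence, and then treat the three boundary points $t = 0$ and $t = \pm\infty$ separately. The hypothesis that both $f$ and $1/f$ are essentially bounded is decisive: setting $\ell = \essinf_\mu f > 0$ and $L = \esssup_\mu f < \infty$, we have $\ell \leq f \leq L$ $\mu$-almost everywhere, and hence $\log f$ is $\mu$-essentially bounded as well.

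On any compact subinterval $[a,b] \subset (0, \infty)$ we have $f^t \leq \max(L^a, L^b)$ $\mu$-a.e., and similarly on $[a,b] \subset (-\infty, 0)$ we bound $f^t$ by $\max(\ell^a, \ell^b)$. Dominated convergence then gives continuity of $t \mapsto \int f^t \d\mu$, and composition with the continuous map $x \mapsto x^{1/t}$ (for $t \ne 0$) yields continuity of $M_t(\mu,f)$ away from $0$ and $\pm\infty$.

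For continuity at $t = 0$, since $\log f$ is bounded $\mu$-a.e.\ the expansion $f^t = \exp(t \log f) = 1 + t \log f + O(t^2)$ holds uniformly in $x$, so $\int f^t \d\mu = 1 + t \int \log f \d\mu + O(t^2)$ and therefore $\frac{1}{t} \log \int f^t \d\mu \to \int \log f \d\mu$ as $t \to 0$. Exponentiating gives $M_t(\mu, f) \to M_0(\mu, f)$. For continuity at $t = \infty$, the bound $\int f^t \d\mu \leq L^t$ yields $M_t(\mu,f) \leq L$ for $t > 0$; conversely, for each $\epsilon > 0$ the set $A_\epsilon = \{ f > L - \epsilon\}$ has some positive measure $\alpha = \mu(A_\epsilon) > 0$, so $\int f^t \d\mu \geq (L-\epsilon)^t \alpha$ and hence $M_t(\mu,f) \geq (L-\epsilon) \alpha^{1/t} \to L - \epsilon$ as $t \to \infty$. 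Since $\epsilon$ is arbitrary, $M_t(\mu,f) \to L = M_\infty(\mu,f)$. Continuity at $t = -\infty$ then follows by applying this argument to $1/f$ (which satisfies the same hypotheses) and using the duality $M_{-t}(\mu,f) = 1/M_t(\mu, 1/f)$ of Remark~\ref{rmk:powermean_dual}.

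The main obstacle is the behaviour at $t = 0$, where both numerator and denominator in the exponent $\frac{1}{t} \log \int f^t \d\mu$ vanish; cancelling them requires precisely the uniform first-order expansion of $f^t$ that the essential boundedness of $\log f$ makes available. The endpoints $\pm\infty$ are comparatively routine once one observes that essential boundedness gives the right two-sided estimates and that the duality formula halves the work.
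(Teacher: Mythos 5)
Your proof is correct, but it is genuinely different in character from the paper's: the paper does not argue directly at all, instead citing Section~6.11 of Hardy--Littlewood--P\'olya for the interval case and Exercise~1.8.1 of Niculescu--Persson for the generalisation to arbitrary probability spaces, and then patching the one point where that reference falls short (the cited exercise only gives right-continuity at $t=0$, so the paper appeals to the duality $M_{-t}(\mu,f)=1/M_t(\mu,1/f)$ to get left-continuity). You instead give a self-contained argument: dominated convergence on compact subintervals of $(\pm\infty,0)$ and $(0,\infty)$ using the two-sided bounds $\ell \leq f \leq L$ a.e.; a uniform first-order expansion $f^t = 1 + t\log f + O(t^2)$ (valid uniformly a.e.\ because $\log f$ is essentially bounded) to get two-sided continuity at $t=0$ directly; the standard $\|f\|_{L^t}\to\|f\|_{L^\infty}$ argument at $t=+\infty$; and duality only for $t=-\infty$. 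Your route buys a proof that is explicit about where the hypothesis ``both $f$ and $1/f$ essentially bounded'' enters, and it dispenses with the asymmetry at $t=0$ that forces the paper's duality step; the paper's route buys brevity at the cost of depending on references whose hypotheses do not quite match. Two small points of polish: in the step away from $0$ and $\pm\infty$ you should say that $(x,t)\mapsto x^{1/t}$ is jointly continuous on $(0,\infty)\times(\R\setminus\{0\})$ and that $\int f^t\d\mu$ stays in a compact subset of $(0,\infty)$ on each compact interval (which your bounds already give), rather than composing with a map depending on the variable $t$; and in the argument at $t=+\infty$ note that $\mu(A_\epsilon)>0$ is exactly the definition of the essential supremum, and that one may assume $L-\epsilon>0$ since $L\geq\ell>0$.
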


\begin{proof}
Again, this is proved in the case of a real interval in Section~6.11
of~\cite{HardyInequalities1952}.  The generalisation to an arbitrary
probability space is sketched as Exercise~1.8.1
of~\cite{NiculescuConvex2006}, although the hypotheses on $f$ there are
weaker than ours, and at $t = 0$ only continuity from the right is proved.  
Under our hypotheses on $f$, continuity from the left then follows from the
duality of Remark~\ref{rmk:powermean_dual}.
\end{proof}

\paragraph*{Differentiation under the integral sign}

We will need the following standard result (Theorem~6.28
of~\cite{KlenkeProbability2013}).

\begin{lem}\label{lem:diff_int}
Let \((X,\mu)\) be a measure space and \(J \subseteq \R\)
an open interval. Let \(f: X \times J \to \R\) be a
map with the following properties:
\begin{enumerate}[(i)]
\item 
\label{part:di-int}
for all $t \in J$, the map $f(-, t): X \to \R$ is integrable;

\item 
\label{part:di-diff}
for almost all \(x \in X\), the map \(f(x, -): J \to \R\) is
differentiable; 

\item 
\label{part:di-bound}
there is an integrable function $h: X \to \R$ such that for all \(t \in
  J\), for almost all $x \in X$, we have \(\bigl|\tfrac{\partial f}{\partial
    t} (x, t)\bigr| \leq h(x)\).
\end{enumerate}
Then $\tfrac{\partial f}{\partial t} (-, t): X \to \R$ is integrable for
each $t \in J$, and the function $t \mapsto \int_X f(-, t) \d\mu$ on $J$ is
differentiable with derivative $t \mapsto \int_X \tfrac{\partial f}{\partial
  t} (-, t) \d\mu$.   
\end{lem}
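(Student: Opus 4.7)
The plan is to reduce the claim to an application of the dominated convergence theorem to the difference quotients, which is the standard strategy for this kind of result. Fix $t_0 \in J$ and pick a sequence $(t_n)_{n \geq 1}$ in $J \setminus \{t_0\}$ with $t_n \to t_0$. For each $n$, define the difference quotient
\[
g_n(x) = \frac{f(x, t_n) - f(x, t_0)}{t_n - t_0}.
\]
By (\ref{part:di-int}), each $g_n$ is integrable, and
\[
\int_X g_n \, \mathrm{d}\mu
=
\frac{1}{t_n - t_0}
\left( \int_X f(-, t_n) \, \mathrm{d}\mu - \int_X f(-, t_0) \, \mathrm{d}\mu \right).
\]

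Next I would verify the two hypotheses needed for dominated convergence applied to $(g_n)$. For pointwise convergence: by (\ref{part:di-diff}), there is a $\mu$-null set $N_0 \subseteq X$ off which $f(x, -)$ is differentiable, and for every $x \notin N_0$ we have $g_n(x) \to \tfrac{\partial f}{\partial t}(x, t_0)$. For the uniform bound: hypothesis (\ref{part:di-bound}) supplies, for each $t \in J$, a $\mu$-null set $N_t$ off which $|\tfrac{\partial f}{\partial t}(x, t)| \leq h(x)$. Since the mean value theorem places the relevant evaluation point of $\tfrac{\partial f}{\partial t}$ at some $\xi_n(x)$ strictly between $t_0$ and $t_n$, I would collect all the null sets $N_{\xi}$ for $\xi$ in the countable set $\{t_n\}$ together with $N_0$ — wait, actually $\xi_n(x)$ depends on $x$, so simply taking a countable union does not obviously suffice. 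The clean way around this is to apply the mean value theorem only off the countable union $N = N_0 \cup \bigcup_n N_{t_n}$: for $x \notin N$, $f(x, -)$ is differentiable on $J$, and by the MVT there exists $\xi_n(x)$ between $t_0$ and $t_n$ with $g_n(x) = \tfrac{\partial f}{\partial t}(x, \xi_n(x))$; but the bound from (\ref{part:di-bound}) at $\xi_n(x)$ requires $x$ to avoid $N_{\xi_n(x)}$, which need not be in our countable union. This is the main subtlety of the proof.

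To resolve it, I would refine the argument by fixing a compact subinterval $[a,b] \subset J$ containing $t_0$ in its interior and all but finitely many $t_n$, and then observe that it suffices to bound $|g_n(x)|$ for $x$ off a single null set. The cleanest route is to invoke (\ref{part:di-bound}) at each rational $t$ in $[a,b]$, take the union $N'$ of the corresponding null sets together with $N$, and use continuity of $\tfrac{\partial f}{\partial t}(x, -)$ on $J$ where it exists — or more simply, redefine $f$ on the null set $N$ by setting it to $0$ there, so that $f(x,-)$ is differentiable for \emph{every} $x$ outside $N$ and $|\tfrac{\partial f}{\partial t}(x,t)| \leq h(x)$ holds for all $(x,t) \in (X \setminus N') \times J$ by combining (\ref{part:di-bound}) across a countable dense set of $t$'s. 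This gives $|g_n(x)| \leq h(x)$ for all $x \notin N'$ and all sufficiently large $n$, which is precisely the domination needed.

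With these two conditions in hand, dominated convergence yields that $\tfrac{\partial f}{\partial t}(-, t_0)$ is integrable and
\[
\int_X g_n \, \mathrm{d}\mu
\longrightarrow
\int_X \tfrac{\partial f}{\partial t}(-, t_0) \, \mathrm{d}\mu.
\]
Since $t_0 \in J$ and the sequence $(t_n)$ were arbitrary, the map $t \mapsto \int_X f(-, t)\, \mathrm{d}\mu$ is differentiable at $t_0$ with the asserted derivative. As noted, the main obstacle is the measure-theoretic bookkeeping needed to turn the pointwise-in-$t$ bound (\ref{part:di-bound}) into a single $\mu$-almost-everywhere bound valid simultaneously for all $n$; once that is handled, the rest is a direct application of the dominated convergence theorem.
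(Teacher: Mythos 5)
Your overall strategy---difference quotients, the mean value theorem, and dominated convergence---is the standard proof of this differentiation lemma (the paper itself gives no argument, just a citation to Theorem~6.28 of Klenke), and the integrability and pointwise-convergence parts of your write-up are fine. The genuine gap is exactly at the point you flag and then try to talk your way past: the domination. Taking a countable dense set of parameters gives a single null set off which $|\partial f/\partial t(x,q)| \leq h(x)$ for all $q$ in that dense set, but upgrading this to all $t \in J$ requires continuity of $\partial f/\partial t(x,-)$, which is not a hypothesis: differentiability of $f(x,-)$ does not make its derivative continuous, and a derivative can be small on a dense set while the function has large difference quotients (Pompeiu's example of a strictly increasing, everywhere differentiable function whose derivative vanishes on a dense set shows the dense-set bound controls neither the derivative at the mean value point $\xi_n(x)$ nor $|g_n(x)|$). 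Redefining $f$ to vanish for $x$ in the null set from hypothesis~(ii) is harmless but irrelevant, since the problem is the $t$-dependence of the null sets in hypothesis~(iii), not the single null set in~(ii). So as written, the bound $|g_n(x)| \leq h(x)$ a.e.\ has not been established and the appeal to dominated convergence is unsupported.

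There are two honest ways to close this. The first is to read hypothesis~(iii) with one null set working simultaneously for all $t$ (this is the reading under which the cited result is proved); then your mean value argument is complete as it stands, since $\xi_n(x) \in J$ and the bound applies there. The second is to keep the literal quantifiers and argue harder: $\partial f/\partial t$ is jointly measurable (it is a pointwise limit of difference quotients of a function measurable in $x$ and continuous in $t$ for a.e.\ $x$), so by Fubini the set $\{(x,t) : |\partial f/\partial t(x,t)| > h(x)\}$ is null for the product of $\mu$ with Lebesgue measure, whence for $\mu$-almost every $x$ the bound holds for Lebesgue-almost every $t$; one then needs the refinement of the mean value inequality asserting that an everywhere differentiable function whose derivative is bounded by $M$ almost everywhere is $M$-Lipschitz (a consequence of the Goldowsky--Tonelli theorem), which does dominate the difference quotients and lets your dominated convergence argument go through. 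Either route works; what cannot work is passing from a bound on a dense set of $t$'s to a bound at all $t$ without some continuity or almost-everywhere control in the $t$ variable.
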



\section{Typicality}
\label{S_typicality}

The setting for the rest of this paper is a space $X$ equipped with a
notion of similarity or proximity between points in $X$ (which may or may
not be derived from a metric).  In this section, we show how any probability
measure on $X$ gives rise to a `typicality function' on $X$, whose value at
a point $x$ indicates how concentrated the measure is near $x$.

\begin{defn}\label{def_similarity}
Let \(X\) be a compact Hausdorff space. A \demph{similarity kernel} on
\(X\) is a continuous function \(K: X \times X \to [0, \infty)\) satisfying
  \(K(x,x)>0\) for all \(x \in X\). The pair \((X,K)\) is a
  \demph{(compact Hausdorff) space with similarities}.
\end{defn}

Since we will only be interested in compact Hausdorff spaces, we omit the
words `compact Hausdorff' and simply refer to spaces with similarities.

\begin{example}\label{eg:ecosystem_1}
In ecology, there has been vigorous discussion of how best to quantify the
diversity of a biological community.  This is a conceptual and mathematical
challenge, quite separate from the practical and statistical difficulties,
and many dozens of diversity measures have been proposed over~70 years of
debate in the ecological literature~\cite{MagurranMeasuring2004}.

Any realistic diversity measure should reflect the degree of variation
between the species present.  All else being equal, a lake
containing four species of carp should be counted as less diverse than a lake
containing four very different species of fish.  The similarity between
species may be measured genetically, phylogenetically, functionally, or in
some other way (as discussed in~\cite{LeinsterMeasuring2012}); how it
is done will not concern us here.

Mathematically, we take a finite set $X = \{1, \ldots, n\}$ (whose elements
represent the species) and a real number $Z_{ij} \geq 0$ for each pair $(i,
j)$ (representing the degree of similarity between species $i$ and $j$).  A
similarity coefficient $Z_{ij} = 0$ means that species $i$ and $j$ are
completely dissimilar, and we therefore assume that $Z_{ii} > 0$ for all
$i$.  Thus, $Z = (Z_{ij})$ is an $n \times n$ nonnegative real matrix with
strictly positive entries on the diagonal.  In the language of
Definition~\ref{def_similarity}, this is the case of finite spaces with
similarities: $X$ has the discrete topology, and the similarity kernel $K$
is given by $K(i, j) = Z_{ij}$.  When $Z$ is the identity matrix, $K$ is
the Kronecker delta.

Many ways of assigning inter-species similarities are calibrated on a
scale of $0$ to $1$, with $Z_{ii} = 1$ for all $i$ (each species is
identical to itself).  For example, percentage genetic similarity gives
similarity coefficients in $[0, 1]$, as does the similarity measure
$e^{-d(i, j)}$ derived from a metric $d$ and discussed below.  The simplest
possible choice of $Z$ is the identity matrix, embodying the crude
assumption that different species have nothing in common whatsoever.
\end{example}

\begin{example}\label{eg:metric_1}
Any compact metric space $(X, d)$ can be regarded as a space with
similarities $(X, K)$ by putting
\[
K(x, y) = e^{-d(x, y)}
\]
($x, y \in X$).  The extreme case where $d(x, y) = \infty$ for all $x
\neq y$ produces the Kronecker delta. 

Although the negative exponential is not the only reasonable function
transforming distances into similarities, it turns out to be a particularly
fruitful choice.  It is associated with the very fertile theory of the
magnitude of metric spaces (surveyed in~\cite{Leinstermagnitude2017}).
Moreover, the general categorical framework of magnitude all but
forces this choice of transformation, as explained in Example~2.4(3)
of~\cite{Leinstermagnitude2017}.
\end{example}

In the examples above, the similarity kernel is \demph{symmetric}: \(K(x,y)
= K(y,x)\) for all \(x,y \in X\). In such cases we say \((X,K)\) is a
\demph{symmetric space with similarities}. We do not include symmetry in
the definition of similarity kernel, partly because asymmetric similarity
matrices occasionally arise in ecology, and also because of the argument of
Gromov (\cite{GromovMetric2001}, p.~xv) and Lawvere
(\cite{LawvereMetric1973}, p.~138--9) that the symmetry condition in the
definition of metric can be too restrictive.  To obtain our main result,
however, it will be necessary to assume symmetry.

Most measures of biological diversity depend (at least in part) on the
relative abundance distribution $\vec{p} = (p_1, \ldots, p_n)$ of the
species, where `relative' means that the $p_i$ are normalised to sum to
$1$.  Multiplying the similarity matrix $Z$ by the column vector $\vec{p}$
gives another vector $Z\vec{p}$, with $i$th entry
\[
(Z\vec{p})_i = \sum_j Z_{ij} p_j.
\]
This is the expected similarity between an individual of species $i$ and an
individual chosen at random.  Thus, $(Z\vec{p})_i$ measures how typical
individuals of species $i$ are within the community.  The generalisation to
an arbitrary space with similarities is as follows.

\begin{defn}\label{def_typicality}
Let $(X, K)$ be a space with similarities.  For each \(\mu \in M(X)\) and
\(x \in X\), define
\[
(K\mu)(x) = \int_X K(x,-) \d\mu \in \R.
\]
This defines a function \(K\mu: X \to \R\), the \demph{typicality
  function} of \((X,K,\mu)\).
\end{defn}

When $\mu$ is a probability measure (the case of principal interest),
$(K\mu)(x)$ is the expected similarity between $x$ and a random point.  It
therefore detects the extent to which $x$ is similar, or near, to sets of
large measure.

In the next section, we will define entropy and diversity in terms of the
typicality function $K\mu$.  For that, we will need to know that $K\mu$
satisfies some analytic conditions, which we now establish.

\glob{For the rest of this section, let $(X, K)$ be a space with
  similarities.}  The next lemma follows directly from
  Lemma~\ref{lem:uniform_expbl}. 

\begin{lem}\label{lem:Kbar_cts}
The function $\overline{K}: X \to C(X)$ defined by $x \mapsto K(x, -)$ is
continuous. 
\end{lem}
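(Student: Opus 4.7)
The plan is to apply Lemma \ref{lem:uniform_expbl} directly. We have the similarity kernel $K : X \times X \to [0, \infty)$, which is continuous by the definition of a space with similarities (Definition \ref{def_similarity}); composing with the inclusion $[0, \infty) \hookrightarrow \R$ we may equivalently regard $K$ as a continuous map $X \times X \to \R$. Since $X$ is compact Hausdorff (spaces with similarities are compact Hausdorff by convention) and $\R$ is a metric space, Lemma \ref{lem:uniform_expbl} applies with the roles of $X$, $Y$, and $Z$ there played by $X$, $X$, and $\R$ respectively.

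Concretely, the lemma tells us that the transpose $\overline{K} : X \to \cn{Top}(X, \R)$, defined by $\overline{K}(x)(y) = K(x, y)$, is continuous. This is exactly the function in the statement, since by the standing convention of the paper $C(X) = \cn{Top}(X, \R)$ is equipped with the compact-open topology, which on $C(X)$ coincides with the uniform topology because $X$ is compact.

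There is no substantive obstacle: the only things to check are that the hypotheses of Lemma \ref{lem:uniform_expbl} are met and that the target $C(X)$ is identified with $\cn{Top}(X, \R)$ as topological spaces, both of which follow from the conventions established in Section \ref{S_background}.
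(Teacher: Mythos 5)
Your proof is correct and matches the paper's intended argument: the paper simply notes that the lemma ``follows directly from Lemma~\ref{lem:uniform_expbl}'', which is exactly the application you spell out (with $Y = X$ compact Hausdorff and $Z = \R$). The extra checks about the compact-open versus uniform topology on $C(X)$ are fine and consistent with the conventions in Section~\ref{S_background}.
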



\begin{lem}\label{lem:Kmu_cts}
For each \(\mu \in M(X)\), the function \(K\mu: X \to \R\) is continuous.
\end{lem}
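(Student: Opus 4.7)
The plan is to exhibit $K\mu$ as a composition of two continuous maps, using Lemma~\ref{lem:Kbar_cts} together with the continuity of the pairing against a fixed measure. Explicitly, unwinding Definition~\ref{def_typicality} gives
\[
(K\mu)(x) = \int_X K(x, -) \d\mu = \ip{\overline{K}(x)}{\mu},
\]
so $K\mu$ factors as $X \xto{\overline{K}} C(X) \xto{\ip{-}{\mu}} \R$, where $\overline{K}$ is the map $x \mapsto K(x, -)$ from Lemma~\ref{lem:Kbar_cts}.

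First I would invoke Lemma~\ref{lem:Kbar_cts} to conclude that $\overline{K}: X \to C(X)$ is continuous (with $C(X)$ carrying the uniform norm topology, as specified in Section~\ref{S_background}). Second I would observe that evaluation against the fixed finite signed measure $\mu$ defines a continuous linear functional $\ip{-}{\mu}: C(X) \to \R$; this is immediate from the estimate
\[
\bigl| \ip{f}{\mu} - \ip{g}{\mu} \bigr| \leq \|f - g\|_\infty \cdot \|\mu\|,
\]
with $\|\mu\|$ finite because $\mu \in M(X)$. (Alternatively, one could apply Lemma~\ref{lem:pairing_cts}(\ref{part:pairing_cts_1}) with $Q = \{\mu\}$, which is trivially a closed bounded subset of $M(X)$, though the direct bound above is quicker.) Composing the two continuous maps yields continuity of $K\mu$.

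There is no real obstacle here: the content of the statement is entirely absorbed into Lemma~\ref{lem:uniform_expbl} (via Lemma~\ref{lem:Kbar_cts}), which supplies the joint-versus-separate continuity needed to convert the continuous kernel $K: X \times X \to [0, \infty)$ into a continuous $C(X)$-valued map on the compact Hausdorff space $X$. Once that is in hand, the remaining step is standard functional analysis.
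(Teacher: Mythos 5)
Your proof is correct and matches the paper's argument exactly: both factor $K\mu$ as $X \xto{\overline{K}} C(X) \xto{\ip{-}{\mu}} \R$, invoking Lemma~\ref{lem:Kbar_cts} for the first map and the boundedness of the functional $\ip{-}{\mu}$ (with norm at most $\|\mu\|$) for the second. The extra detail you supply on the continuity of the linear functional is fine but not a departure from the paper's route.
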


\begin{proof}
Note that \(K\mu\) is the composite
\[
X \xto{\overline{K}} C(X) \xto{\langle-,\mu\rangle} \R.
\]
By Lemma \ref{lem:Kbar_cts}, $\overline{K}$ is continuous, and
\(\langle-,\mu\rangle = \int_X - \d\mu\) is a continuous linear functional.
Hence $K\mu$ is continuous.
\end{proof}

\begin{lem}\label{lem:K_*_cts}
The map
\[
\begin{array}{cccc}
K_*:    &P(X)        &\to       &C(X) \\
        &\mu         &\mapsto   &K\mu
\end{array}
\]
is continuous.
\end{lem}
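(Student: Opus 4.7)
The plan is to apply Lemma~\ref{lem:uniform_expbl} in the reverse direction: to show $K_* : P(X) \to C(X) = \cn{Top}(X, \R)$ is continuous, it suffices to show that the associated two-variable map
\[
F : P(X) \times X \to \R, \qquad (\mu, x) \mapsto (K\mu)(x)
\]
is continuous, since $X$ is compact Hausdorff and $\R$ is metric. (Lemma~\ref{lem:uniform_expbl} is stated with the first factor as the ``parameter'', which is exactly the role played here by $P(X)$.)

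To establish continuity of $F$, I would exhibit it as a composite of continuous maps. By the definition of the typicality function and the pairing~\eqref{eq:pairing},
\[
F(\mu, x) = \int_X K(x, -) \d\mu = \bigl\langle \overline{K}(x), \mu \bigr\rangle.
\]
Thus $F$ factors as
\[
P(X) \times X \xto{\mathrm{id} \times \overline{K}} P(X) \times C(X) \xto{\mathrm{swap}} C(X) \times P(X) \xto{\langle -, - \rangle} \R.
\]
The first arrow is continuous by Lemma~\ref{lem:Kbar_cts}, which asserts continuity of $\overline{K}$. The swap is a homeomorphism. For the third arrow, $P(X)$ is a closed bounded subset of $M(X)$ (closed in the weak* topology, and bounded since every element has total variation~$1$), so Lemma~\ref{lem:pairing_cts}(\ref{part:pairing_cts_2}) applies and gives continuity of the restricted pairing. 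The composite $F$ is therefore continuous.

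Applying Lemma~\ref{lem:uniform_expbl} to $F$ then yields that $\overline{F} = K_* : P(X) \to C(X)$ is continuous, which is the desired conclusion.

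No step looks like a real obstacle: everything has been packaged by the preceding lemmas, and the only substantive content is realising that boundedness of $P(X)$ lets us invoke Lemma~\ref{lem:pairing_cts}(\ref{part:pairing_cts_2}) to obtain joint continuity of the pairing on $C(X) \times P(X)$. The mild point to be careful about is making sure the hypotheses of Lemma~\ref{lem:uniform_expbl} are matched up correctly: the ``parameter space'' $P(X)$ need not satisfy any compactness hypothesis in that lemma, only the ``function domain'' $X$ does, and that is indeed compact Hausdorff by assumption.
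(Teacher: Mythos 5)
Your proof is correct and essentially the same as the paper's: both reduce the statement, via Lemma~\ref{lem:uniform_expbl}, to continuity of the two-variable evaluation $(\mu,x)\mapsto(K\mu)(x)$ and then combine Lemma~\ref{lem:Kbar_cts} with Lemma~\ref{lem:pairing_cts}. The only cosmetic difference is that you apply the exponential law once and use part~(\ref{part:pairing_cts_2}) of the pairing lemma with $Q=P(X)$, whereas the paper applies it twice and uses part~(\ref{part:pairing_cts_1}); since part~(\ref{part:pairing_cts_2}) is itself deduced from part~(\ref{part:pairing_cts_1}) by that same exponential law, the two arguments coincide.
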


\begin{proof}
Both \(X\) and \(P(X)\) are compact Hausdorff so, applying
Lemma~\ref{lem:uniform_expbl} twice, an equivalent statement is that
the map
\[
\begin{array}{ccc}
X       &\to            &C(P(X))        \\
x       &\mapsto        &(K-)(x) = (\mu \mapsto (K\mu)(x))
\end{array}
\]
is continuous. This map is the composite
\[
X \xto{\overline{K}} C(X) \to C(P(X)),
\]
where the second map is \(f \mapsto \langle f,-\rangle\) and is continuous
by Lemma~\ref{lem:pairing_cts}(\ref{part:pairing_cts_1}). Hence, \(K_*:
P(X) \to C(X)\) is continuous.
\end{proof}

\begin{prop}\label{prop:Kmu_props}
For every measure $\mu$ on $X$, 
\begin{enumerate}[(i)]
\item 
\label{part:Kp-supp}
\(\supp K\mu \supseteq \supp \mu\);

\item 
\label{part:Kp-eb}
both \(K\mu\) and \(1/K\mu\) are essentially bounded with respect to
\(\mu\). 
\end{enumerate}
\end{prop}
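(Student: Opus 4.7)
The proposition is quite direct once we unpack the definitions, with part~(i) feeding into part~(ii).

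For part~(i), let $x \in \supp \mu$. I want to show $(K\mu)(x) = \int_X K(x, -) \d\mu > 0$, so that $x \in \supp K\mu$. The function $K(x, -): X \to [0, \infty)$ is continuous (it is the image of $x$ under $\overline{K}$, though for this argument I just need continuity in the second variable), and $K(x, x) > 0$ by the definition of a similarity kernel, so $x \in \supp K(x, -)$. Combined with $x \in \supp \mu$, this gives $\supp K(x, -) \cap \supp \mu \neq \emptyset$, and Lemma~\ref{lem:disjoint_supps} yields $(K\mu)(x) > 0$.

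For part~(ii), essential boundedness of $K\mu$ itself is immediate: by Lemma~\ref{lem:Kmu_cts}, $K\mu$ is continuous on the compact space $X$, so it is bounded, hence essentially bounded with respect to any measure. The substantive point is the essential boundedness of $1/K\mu$, which requires $K\mu$ to be bounded away from $0$ on a set of full $\mu$-measure. Here I use part~(i): $K\mu$ is strictly positive on the closed (hence compact) subset $\supp \mu \subseteq X$, so by continuity and compactness it attains a positive minimum $m > 0$ there. For any $c < m$, the open set $\{K\mu < c\}$ is disjoint from $\supp \mu$, so has $\mu$-measure zero; thus $\essinf_\mu(K\mu) \geq m > 0$, and $1/K\mu$ is essentially bounded by $1/m$.

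There is no real obstacle; the main conceptual point is recognising that although $K\mu$ may vanish on parts of $X \setminus \supp \mu$ (so $1/K\mu$ need not be bounded as a function on $X$), for essential boundedness with respect to $\mu$ only the behaviour of $K\mu$ on $\supp \mu$ matters, and part~(i) guarantees exactly the positivity needed there.
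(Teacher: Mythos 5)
Your proof is correct and follows essentially the same route as the paper's: part~(i) via Lemma~\ref{lem:disjoint_supps} together with positivity of $K$ on the diagonal, and part~(ii) via continuity of $K\mu$ and compactness of $\supp \mu$ to bound $K\mu$ away from $0$ on the support. Your write-up merely spells out the $\essinf$ detail (that $\{K\mu < c\}$ has $\mu$-measure zero) which the paper leaves implicit.
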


\begin{proof}
For~(\ref{part:Kp-supp}), let \(x \in \supp \mu\). Since \(K\) is positive
on the diagonal, \(x \in \supp K(x,-)\), so \(\supp \mu \cap \supp K(x,-)
\neq \emptyset\). Hence by Lemma~\ref{lem:disjoint_supps},
\[
(K\mu)(x) = \int_X K(x,-) \d\mu > 0.
\]
For~(\ref{part:Kp-eb}), $\supp\mu$ is compact and \(K\mu\) is continuous with
\(K\mu\big|_{\supp \mu} > 0\).  So both $K\mu$ and $1/K\mu$ are
bounded on $\supp\mu$, hence essentially bounded on $X$.
\end{proof}


\section{Diversity and entropy}
\label{S_diversity}

Here we introduce the main subject of the paper: a one-parameter family of
functions that quantify the degree of spread of a probability measure on
a compact Hausdorff space \(X\), with respect to a chosen notion of
similarity between points of \(X\).  

Take a probability measure $\mu$ on a space with similarities $(X, K)$.
Intuitively, $\mu$ is widely spread across $X$ if most points are
dissimilar to most of the rest of $X$, interpreting `most' in terms of
$\mu$.  Equivalent ways to say this are that the typicality function $K\mu:
X \to \R$ takes small values on most of $X$, or that the `atypicality'
function $1/K\mu$ takes large values on most of $X$.  Ecologically, a
community is diverse if it is predominantly made up of species that are
unusual or atypical within that community (for example, many rare and
highly dissimilar species).

Diversity of $\mu$ is, therefore, defined as mean atypicality.  It is
useful to consider not just the arithmetic mean, but the power means of all
orders:

\begin{defn}\label{def_diversity}
Let \((X, K)\) be a space with similarities and let $q \in [-\infty,
  \infty]$.  The \demph{diversity of order \(q\)} of a probability measure
$\mu$ on $X$ is
\[
D_q^K(\mu) = M_{1-q}(\mu, 1/K\mu) \in (0, \infty).
\]
The \demph{entropy of order $q$} of $\mu$ is $H_q^K(\mu) = \log D_q^K(\mu)$.
\end{defn}

By the duality of Remark~\ref{rmk:powermean_dual}, an equivalent definition is
\[
D_q^K(\mu) = 1/M_{q - 1}(\mu, K\mu).
\]
On the right-hand side, the denominator is the mean typicality of a point
in $X$, which is a measure of \emph{lack} of diversity; its reciprocal is
then a measure of diversity.  The power means in this formula and
Definition~\ref{def_diversity} are well-defined because $K\mu$ and $1/K\mu$
are essentially bounded with respect to \(\mu\)
(Proposition~\ref{prop:Kmu_props}).

Explicitly, 
\begin{align*}\label{4}
D_q^K(\mu)= 
\begin{cases}
\left( \int_{X} \left( K\mu \right)^{q-1} \d\mu \right)^{1/(1-q)}       & 
\text{ if } q \in (-\infty, 1) \cup (1, \infty),\\
\exp \left( -\int_X \log (K\mu) \d\mu\right)    & 
\text{ if } q = 1,              \\
1 / \esssup_\mu K\mu        & 
\text{ if } q = \infty,         \\
1 / \essinf_\mu K\mu        & 
\text{ if } q = - \infty.
\end{cases}
\end{align*}
We usually work with the diversities $D_q^K$ rather than the entropies
$H_q^K$, but evidently it is trivial to translate results on diversity into
results on entropy.

\begin{example}
Let $X$ be the set $\{1, \ldots, n\}$ with the discrete topology, let $K$
be the Kronecker delta on $X$, and let $\mu$ be the uniform measure on $X$.
Then $K\mu \equiv 1/n$, so $D_q^K(\mu) = n$ and $H_q^K(\mu) = \log n$ for
all $q$.  This conforms to the intuition that the larger we take $n$ to be,
the more thinly spread the uniform measure on $\{1, \ldots, n\}$ becomes.
\end{example}

The next two examples also concern the finite case.  They are described in
terms of the ecological scenario of Example~\ref{eg:ecosystem_1}.  Thus, $X
= \{1, \ldots, n\}$ is a set of species, $Z_{ij} = K(i, j)$ is the
similarity between species $i$ and $j$, and $\mu = \vec{p} = (p_1, \ldots,
p_n)$ gives the proportions in which the species are present.

\begin{example}
\label{eg:fin-div-1}
Put $Z = I$ (distinct species have nothing in common).  Then 
\[
D_0^I(\vec{p}) 
=
\sum_{i \in \supp\vec{p}} p_i \cdot \frac{1}{p_i}
=
|\supp\vec{p}|.
\]
This is just the number of species present.  It is the simplest diversity
measure of all.  But it takes no account of species abundances beyond
presence and absence, whereas, for instance, a community of two species is
ordinarily considered more diverse if they are equally abundant than if
their proportions are $(0.99, 0.01)$.

The diversity of order $1$ is
\[
D_1^I(\vec{p}) 
= 
\exp\Biggl( - \sum_{i \in \supp\vec{p}} p_i \log p_i\Biggr)
=
\prod_{i \in \supp\vec{p}} p_i^{-p_i}
\]
and the entropy $H_1^I(\vec{p}) = \log D_1^I(\vec{p})$ of order $1$ is the
Shannon entropy $-\sum p_i \log p_i$.  The diversity of order $2$
is
\[
D_2^I(\vec{p}) 
=
1\biggl/\sum_{i = 1}^n p_i^2.
\]
The denominator is the probability that two individuals chosen at
random are of the same species, and $D_2^I(\vec{p})$ itself is the expected
number of such trials needed in order to obtain a matching pair.  The
diversity of order $\infty$ is
\[
D_\infty^I(\vec{p}) 
=
1\bigl/\max_i p_i,
\]
which measures the extent to which the community is dominated by a single
species.  All four of these diversity measures (or simple transformations
of them) are used by ecologists~\cite{MagurranMeasuring2004}.
For a general parameter value $q \neq 1, \pm\infty$, the diversity of order
$q$ is 
\[
D_q^I(\vec{p}) 
= 
\Biggl( \sum_{i \in \supp\vec{p}} p_i^q \Biggr)^{1/(1 - q)}.
\]
In ecology, $D_q^I$ is known as the \demph{Hill number} of order
$q$~\cite{HillDiversity1973}, and in information theory, $H_q^I = \log
D_q^I$ is called the \demph{R\'enyi entropy} of order
$q$~\cite{Renyimeasures1961}.  For reasons explained in
Remark~\ref{rmk:q-range}, we usually restrict to $q \geq 0$.

The parameter $q$ controls the emphasis placed on rare or common species.
Low values of $q$ emphasise rare species; high values emphasise common
species.  At one extreme, $D_0^I$ depends only on presence/absence, thus
attaching as much importance to rare species as common ones.  At the other,
$D_\infty^I$ depends only on the abundance of the most common species,
completely ignoring rarer ones.

If a community loses one or more rare species, while at the same time the
remaining species become more evenly balanced, its low-order diversity
falls but its high-order diversity rises.  For example, $D_q^I$
measures the relative abundance distribution $(0.5, 0.5, 0)$ as less
diverse than $(0.8, 0.1, 0.1)$ when $q < 0.853$, but more diverse for
all higher values of $q$.


The moral is that when judging which of two communities is the more
diverse, the answer depends critically on the parameter~$q$.  Different
values of $q$ may produce opposite judgements.
\end{example}

\begin{example}
\label{eg:fin-div-2}
Still in the ecological setting, consider now a general similarity matrix
$Z$ encoding the similarities between species (as
in Example~\ref{eg:ecosystem_1}).  The diversity measures $D_q^Z$ and the
role of the parameter $q$ can be understood much as in the case $Z = I$,
but now incorporating inter-species similarity.  For instance,
\[
D_2^Z(\vec{p}) = 1\Bigl/\sum_{i, j} p_i Z_{ij} p_j
\]
is the reciprocal expected similarity between a random pair of 
individuals (rather than the reciprocal probability that they are of the
same species), and
\[
D_\infty^Z(\vec{p}) = 1\Bigl/ \max_{i \in \supp\vec{p}} (Z\vec{p})_i
\]
reflects the dominance of the largest cluster of species (rather
than the largest single species).
\end{example}

\begin{example}
\label{eg:div2}
Let $(X, K)$ be an arbitrary space with similarities.  Among all the
diversity measures $(D_q^K)_{q \in [0, \infty]}$, one with
especially convenient mathematical properties is the diversity of order
$2$:
\[
D_2^K(\mu) 
= 
\frac{1}{\int_X \int_X K(x, y) \d\mu(x) \d\mu(y)}.
\]
For instance, Meckes used $D_2^K$, and more particularly the maximum
diversity $\sup_{\mu \in P(X)} D_2^K(\mu)$ of order~$2$, to prove results
on the Minkowski dimension of metric spaces (\cite{MeckesMagnitude2015},
Section~7).
\end{example}

We now establish the basic analytic properties of diversity.  First, we
show that when $\mu$ is fixed, \(D_q^K(\mu)\) is a continuous and
decreasing function of its order \(q\).  Second, we prove the more
difficult result that when \(q \in (0,\infty)\) is fixed, 
$D_q^K(\mu)$ is continuous in the measure $\mu$.

\begin{prop}\label{prop:diversity_cts_q}
Let \((X, K)\) be a space with similarities and let $\mu \in P(X)$.
\begin{enumerate}[(i)]
\item 
\label{part:dcq-cts}
\(D_q^K(\mu)\) is continuous in its order \(q \in [-\infty, \infty]\).

\item 
\label{part:dcq-dec}
If \(K\mu\) is constant on the support of \(\mu\), then the function \(q
\mapsto D_q^K(\mu)\) is constant on \([-\infty, \infty]\); otherwise, it is
strictly decreasing in \(q \in [-\infty, \infty]\).
\end{enumerate}
\end{prop}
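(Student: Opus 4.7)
The plan is to deduce both parts by applying the power-mean results of Section~\ref{S_background} (Propositions~\ref{prop:powermean_mono} and~\ref{prop:powermean_cts}) to the function $f = 1/K\mu$ on the probability space $(X, \mu)$. The prerequisites for those results are already in hand: by Lemma~\ref{lem:Kmu_cts} the function $K\mu$ is continuous, and by Proposition~\ref{prop:Kmu_props}(\ref{part:Kp-eb}) both $K\mu$ and $1/K\mu$ are essentially bounded with respect to $\mu$, so all power means $M_t(\mu, 1/K\mu)$ lie in $(0, \infty)$.

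For part~(\ref{part:dcq-cts}), by definition $D_q^K(\mu) = M_{1-q}(\mu, 1/K\mu)$. Proposition~\ref{prop:powermean_cts} asserts that $t \mapsto M_t(\mu, 1/K\mu)$ is continuous on $[-\infty, \infty]$, and composition with the homeomorphism $q \mapsto 1-q$ of $[-\infty, \infty]$ yields continuity of $D_q^K(\mu)$ in $q$.

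For part~(\ref{part:dcq-dec}), Proposition~\ref{prop:powermean_mono} gives the dichotomy: either $1/K\mu$ agrees $\mu$-almost everywhere with a constant, in which case $t \mapsto M_t(\mu, 1/K\mu)$ is constant, or else it is strictly increasing in $t$. Precomposing with $q \mapsto 1-q$ flips this to the statement in the proposition, provided we can identify the first case with the condition that $K\mu$ is constant on $\supp \mu$. This identification is where continuity of $K\mu$ plays a role: if $K\mu \equiv c$ on $\supp \mu$, then $\{K\mu \neq c\} \subseteq X \setminus \supp \mu$ has $\mu$-measure zero, so $K\mu$ (and hence $1/K\mu$) is essentially constant; conversely, if $1/K\mu$ is essentially equal to some value $1/c$, then the set $\{K\mu \neq c\}$ is open by continuity of $K\mu$, has $\mu$-measure zero, and is therefore disjoint from $\supp \mu$ by the characterisation of the support recalled in Section~\ref{S_background}, so $K\mu \equiv c$ on $\supp \mu$.

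The argument is essentially a translation of the established power-mean results; the only subtlety is the last equivalence between the measure-theoretic ``essentially constant with respect to $\mu$'' and the topological ``constant on $\supp \mu$'', which relies on the continuity of $K\mu$ and the defining property of the support of a Radon measure. No new analytic obstacle appears.
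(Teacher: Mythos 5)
Your proposal is correct and takes essentially the same route as the paper, whose proof simply cites Propositions~\ref{prop:powermean_cts} and~\ref{prop:powermean_mono} applied to $f = 1/K\mu$. The extra detail you supply---the equivalence between ``$1/K\mu$ is $\mu$-essentially constant'' and ``$K\mu$ is constant on $\supp\mu$'', using continuity of $K\mu$ and the characterisation of the support---is exactly the step the paper leaves implicit, and you handle it correctly.
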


\begin{proof}
The two parts follow from 
Propositions~\ref{prop:powermean_cts} and~\ref{prop:powermean_mono},
respectively. 
\end{proof}

\begin{rmk}\label{rmk:typicality_constant}
A central role will be played by measures $\mu$ satisfying the first case
of Proposition~\ref{prop:diversity_cts_q}(\ref{part:dcq-dec}).  We call
$\mu$ \demph{balanced} if the function $K\mu$ is constant on $\supp\mu$.
(In~\cite{LeinsterMaximizing2016}, for finite $X$, such measures were
called `invariant'.)  Equivalently, $\mu$ is balanced if $D_q^K(\mu)$ is
constant over $q \in [-\infty, \infty]$.  If $(K\mu)|_{\supp\mu}$ has
constant value $c$ then $D_q^K(\mu)$ has constant value $1/c$.
\end{rmk}

\begin{prop}\label{prop:diversity_cts_mu}
Let \((X, K)\) be a space with similarities.  For every \(q \in (0,
\infty)\), the diversity function \(D_q^K: P(X) \to \R\) is continuous.
\end{prop}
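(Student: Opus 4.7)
The plan is to use Lemma \ref{lem:K_*_cts}---the continuity of $\mu \mapsto K\mu$ from $P(X)$ to $C(X)$---and split the argument by whether $q$ exceeds $1$. For $q > 1$, post-composition with $t \mapsto t^{q-1}$ defines, via Lemma \ref{lem:uniform_functorial}, a continuous map $C(X) \to C(X)$, so $\mu \mapsto (K\mu)^{q-1}$ is continuous from $P(X)$ to $C(X)$. Since $P(X)$ is a closed bounded subset of $M(X)$, Lemma \ref{lem:pairing_cts}(\ref{part:pairing_cts_2}) yields joint continuity of the pairing and hence continuity of $\mu \mapsto \int (K\mu)^{q-1} \d\mu$. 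Proposition \ref{prop:Kmu_props}(\ref{part:Kp-supp}) guarantees the integral is positive, so raising to the $1/(1-q)$ power delivers continuity of $D_q^K$.

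For $q \in (0, 1]$ the integrand ($(K\mu)^{q-1}$ or $\log K\mu$) becomes singular where $K\mu$ vanishes, so I would approximate by a bounded continuous truncation. Let $\phi_\epsilon(t) = (t \vee \epsilon)^{q-1}$ (or $\log(t \vee \epsilon)$ for $q = 1$); then the functional $I_\epsilon(\mu) = \int \phi_\epsilon(K\mu) \d\mu$ is continuous in $\mu$ by the argument of the previous paragraph, and monotone convergence gives $I_\epsilon(\mu) \to \int (K\mu)^{q-1} \d\mu$ (or $\int \log K\mu \d\mu$) pointwise as $\epsilon \to 0$. Continuity of $D_q^K$ will follow once this convergence is \emph{uniform} in $\mu \in P(X)$, since uniform limits of continuous functions are continuous.

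The crux is therefore a uniform-in-$\mu$ tail estimate of the form $\mu(\{K\mu < t\}) \leq Ct$ with $C$ depending only on $(X, K)$. Because $K$ is continuous on the compact space $X \times X$ and positive on the diagonal, there is a symmetric open neighborhood $V$ of the diagonal and a constant $c_1 > 0$ with $K \geq c_1$ on $V$, yielding the pointwise lower bound $K\mu(x) \geq c_1 \mu(V[x])$. Choose a symmetric $V'$ with $V' \circ V' \subseteq V$ (available from the uniform structure of the compact Hausdorff space $X$) and a finite subcover $X = \bigcup_{i=1}^N V'[y_i]$; a short covering argument then gives $\mu(\{\mu(V[\cdot]) < s\}) \leq Ns$ for every $\mu \in P(X)$, and combining with the pointwise bound produces the desired estimate on $\{K\mu < t\}$. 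A layer-cake computation then controls the truncation error by $O(\epsilon^q)$ uniformly in $\mu$; in the $q = 1$ case the inequality $\log(1/x) \leq x^{-s}/s$ (for $x \in (0,1]$ and any $s > 0$) reduces matters to the $q < 1$ estimate. I expect the main obstacle to be exactly this uniform tail bound: without it, the truncation only delivers pointwise approximation and fails to transmit continuity.
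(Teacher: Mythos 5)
Your proposal is correct, and for $q > 1$ it coincides with the paper's argument (compose $K_*$, the power map and the pairing, via Lemmas \ref{lem:K_*_cts}, \ref{lem:uniform_functorial} and \ref{lem:pairing_cts}). For the hard cases $q \in (0,1]$, however, you take a genuinely different route. The paper localises (Lemma \ref{lem:q_in_01}): it covers $X$ by finitely many open sets on which $K \geq b > 0$, takes a subordinate partition of unity $p_1, \ldots, p_n$, writes $D_q^K(\mu)^{1-q} = \sum_i \int (K\mu)^{q-1} p_i \d\mu$, and proves each summand continuous by a case split on whether $\supp p_i$ meets $\supp \mu$; the case $q=1$ is then obtained by sandwiching $D_1^K$ between $D_{q^\pm}^K$ using continuity and monotonicity in the order $q$ (Proposition \ref{prop:diversity_cts_q}). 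You instead globalise the same compactness input: a finite cover by sets $W_i$ with $K \geq c_1$ on $W_i \times W_i$ gives the uniform tail bound $\mu(\{K\mu < t\}) \leq N t / c_1$ for every $\mu \in P(X)$ --- your covering argument is sound, and two small simplifications are available: you can skip the uniformity step ($V'$ with $V' \circ V' \subseteq V$) by choosing for each $y$ an open $W_y$ with $W_y \times W_y \subseteq V$ and passing to a finite subcover, and you can apply the counting argument directly to the open set $\{K\mu < t\}$, which sidesteps any measurability question about $x \mapsto \mu(V[x])$. Your layer-cake computation then does give a truncation error $O(\epsilon^q)$ uniformly in $\mu$ (and $O(\epsilon^{\,\cdot})$ for $q=1$ via $\log(1/x) \leq x^{-s}/s$), so $D_q^K$ is a uniform limit of the continuous truncated functionals and hence continuous. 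What your approach buys: one scheme handling $q=1$ on the same footing as $q \in (0,1)$ (it shows directly that $\mu \mapsto \int \log(K\mu)\d\mu$ is continuous, with no sandwich argument), plus quantitative, uniform-in-$\mu$ control of the approximation. What the paper's approach buys: it stays entirely within the lemmas already established about $K_*$ and the pairing, requires no tail or layer-cake estimates, and the partition of unity isolates precisely where the singularity of $(K\mu)^{q-1}$ can occur.
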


(Recall that we always use the weak$^*$ topology on $P(X)$.)

The proof of Proposition~\ref{prop:diversity_cts_mu} takes the form of
three lemmas, addressing the three cases \(q \in (1,\infty)\), \(q \in
(0,1)\) and \(q = 1\).  

\begin{lem}
For every \(q \in (1,\infty)\), the diversity function \(D_q^K: P(X)
\to \R\) is continuous.
\end{lem}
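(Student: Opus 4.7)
The plan is to express $D_q^K$ as a composition of continuous maps, following the defining formula. For $q \in (1, \infty)$, recall that
\[
D_q^K(\mu)
=
\left( \int_X (K\mu)^{q-1} \d\mu \right)^{1/(1-q)},
\]
which I would factor as
\[
P(X) \xto{K_*} C(X) \xto{(-)^{q-1}} C(X) \xto{\mu \mapsto \int_X - \d\mu} \R \xto{t \mapsto t^{1/(1-q)}} \R.
\]

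First, $K_*: P(X) \to C(X)$ is continuous by Lemma~\ref{lem:K_*_cts}. Second, since $K$ takes values in $[0, \infty)$, so does every $K\mu$; applying Lemma~\ref{lem:uniform_functorial} to the continuous function $\phi: [0, \infty) \to [0, \infty)$, $t \mapsto t^{q-1}$ (continuous at $0$ because $q - 1 > 0$), I get that the pointwise power operation is a continuous self-map of $\cn{Top}(X, [0, \infty)) \subseteq C(X)$. Composing, $\mu \mapsto (K\mu)^{q-1}$ is continuous as a map $P(X) \to C(X)$.

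The middle arrow is the one requiring care, because the integrand $(K\mu)^{q-1}$ itself depends on $\mu$. I would handle this using the joint continuity of the pairing: $P(X)$ has total variation norm $1$ and is closed in the weak* topology (the conditions $\mu \geq 0$ and $\int 1 \d\mu = 1$ are both preserved under weak* limits), so it is a closed bounded subset of $M(X)$. Hence Lemma~\ref{lem:pairing_cts}(\ref{part:pairing_cts_2}) applies, giving joint continuity of $\langle -, - \rangle: C(X) \times P(X) \to \R$. Combining with the continuous map $\mu \mapsto \bigl((K\mu)^{q-1}, \mu\bigr): P(X) \to C(X) \times P(X)$ yields continuity of $\mu \mapsto \int_X (K\mu)^{q-1} \d\mu$.

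Finally, to apply the last arrow $t \mapsto t^{1/(1-q)}$, I need the integral to be strictly positive so that we stay in the domain of continuity $(0, \infty)$. By Proposition~\ref{prop:Kmu_props}(\ref{part:Kp-supp}), $\supp K\mu \supseteq \supp\mu$, hence also $\supp (K\mu)^{q-1} \supseteq \supp \mu$; since $\supp\mu$ is nonempty, Lemma~\ref{lem:disjoint_supps} gives $\int_X (K\mu)^{q-1} \d\mu > 0$. Composing with $t \mapsto t^{1/(1-q)}$, which is continuous on $(0, \infty)$, completes the proof. The main subtlety is the middle step: continuity of integration when both the integrand and the measure vary, which is exactly what the boundedness hypothesis in Lemma~\ref{lem:pairing_cts} is designed to handle.
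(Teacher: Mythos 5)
Your proof is correct and follows essentially the same route as the paper's: factor the map through $K_*$, the pointwise power $(-)^{q-1}$, and the jointly continuous pairing on $C(X) \times P(X)$ (your map $\mu \mapsto \bigl((K\mu)^{q-1}, \mu\bigr)$ is exactly the paper's diagonal followed by $K_* \times \mathrm{Id}$ and $(-)^{q-1} \times \mathrm{Id}$). The only cosmetic difference is the last step: the paper applies the positive power $(-)^{1/(q-1)}$ to obtain $1/D_q^K$ and then inverts using $D_q^K > 0$, whereas you apply the negative power $(-)^{1/(1-q)}$ directly and justify strict positivity of the integral via supports --- both are fine.
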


\begin{proof}
The map \(\mu \mapsto 1/D_q^K(\mu)\) is the composite
\[
P(X) 
\xto{\triangle}
P(X) \times P(X) 
\xto{K_* \times \text{Id}} 
C(X) \times P(X) 
\xto{(-)^{q - 1} \times \text{Id}} 
C(X) \times P(X) 
\xto{\langle -, - \rangle} 
\R 
\xto{(-)^{1/(q - 1)}}
\R.
\]
Here $\triangle$ is the diagonal, which is certainly continuous.  The map
$K_*$ was defined and proved to be continuous in Lemma~\ref{lem:K_*_cts},
and $(-)^{q - 1}: C(X) \to C(X)$ is continuous by
Lemma~\ref{lem:uniform_functorial}.  The restricted pairing $\langle -, -
\rangle$ on $C(X) \times P(X)$ is continuous by
Lemma~\ref{lem:pairing_cts}.  Finally, \((-)^{1/(q-1)}\) is evidently
continuous.  Hence $1/D_q^K$ is continuous.  But $D_q^K$ takes values in
$(0, \infty)$, so is itself continuous.
\end{proof}

The case $q \in (0, 1)$ is harder.  In the following proof, most of the
work is caused by the possibility that \((K\mu)(x) = 0\) for some \(x\),
in which case the function \((K\mu)^{q-1}\) is not defined everywhere.

\begin{lem}\label{lem:q_in_01}
For every \(q \in (0,1)\), the diversity function \(D_q^K: P(X) \to
\R\) is continuous.
\end{lem}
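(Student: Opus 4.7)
The plan is to reduce continuity of $D_q^K$ to that of the map $F: P(X) \to \R$ given by $F(\mu) = \int_X (K\mu)^{q-1} \d\mu$, since $D_q^K(\mu) = F(\mu)^{1/(1-q)}$ with $F(\mu) > 0$ (because $K\mu > 0$ throughout $\supp\mu$ by Proposition~\ref{prop:Kmu_props}) and $t \mapsto t^{1/(1-q)}$ is continuous on $(0,\infty)$. The obstruction absent when $q > 1$ is that the exponent $q-1$ is negative, so $(K\mu)^{q-1}$ blows up wherever $K\mu$ vanishes---and $K\mu$ is only guaranteed to be strictly positive on $\supp\mu$, not throughout $X$.

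My strategy is uniform approximation. For each $\epsilon > 0$, set
\[
F_\epsilon(\mu) = \int_X \bigl(\max(K\mu,\epsilon)\bigr)^{q-1} \d\mu.
\]
The map $g \mapsto \max(g,\epsilon)$ is a $1$-Lipschitz endomorphism of $C(X)$, and $t \mapsto t^{q-1}$ is continuous and bounded on $[\epsilon,\infty)$; so Lemma~\ref{lem:uniform_functorial} together with continuity of $K_*$ (Lemma~\ref{lem:K_*_cts}) shows that $\mu \mapsto (\max(K\mu,\epsilon))^{q-1}$ is continuous as a map $P(X) \to C(X)$. Continuity of $F_\epsilon$ then follows from Lemma~\ref{lem:pairing_cts}(\ref{part:pairing_cts_2}). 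It therefore suffices to show that $F_\epsilon \to F$ uniformly on $P(X)$ as $\epsilon \to 0$.

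Since $(K\mu)^{q-1} \geq (\max(K\mu,\epsilon))^{q-1}$, with equality off $\{K\mu < \epsilon\}$,
\[
0 \leq F(\mu) - F_\epsilon(\mu) \leq \int_{\{K\mu < \epsilon\}} (K\mu)^{q-1} \d\mu.
\]
The hard part is to bound this integral by a quantity depending only on $\epsilon$ and vanishing as $\epsilon \to 0$. The linchpin is the uniform distributional estimate
\[
\mu\bigl(\{x \in X \st (K\mu)(x) < s\}\bigr) \leq C s \qquad (\mu \in P(X),\, s > 0),
\]
where $C$ depends only on $(X,K)$. Given this, the layer-cake formula (or equivalently integration by parts after the substitution $s = t^{1/(q-1)}$) yields
\[
\int_{\{K\mu<\epsilon\}} (K\mu)^{q-1} \d\mu
= \epsilon^{q-1}\mu(\{K\mu<\epsilon\}) + (1-q)\int_0^\epsilon \mu(\{K\mu<s\})\,s^{q-2} \d s
\leq \frac{C}{q}\,\epsilon^q,
\]
which tends to $0$ as $\epsilon \to 0$ precisely because $q > 0$, giving the required uniform convergence.

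The main obstacle is thus the distributional estimate, which I plan to prove by a compactness/covering argument exploiting the diagonal positivity and continuity of $K$. For each $x \in X$, continuity of $K$ at $(x,x)$ gives an open neighbourhood $V_x \ni x$ on which $K \geq K(x,x)/2$ on $V_x \times V_x$; compactness of $X$ furnishes a finite subcover $V_{x_1},\ldots,V_{x_N}$; and setting $c' = \tfrac{1}{2}\min_i K(x_i,x_i) > 0$, one has $(K\mu)(x) \geq c'\,\mu(V_{x_i})$ whenever $x \in V_{x_i}$. Hence $\{K\mu < s\}$ lies in the union of those $V_{x_i}$ for which $\mu(V_{x_i}) < s/c'$, and subadditivity of $\mu$ yields the estimate with $C = N/c'$, independent of $\mu$.
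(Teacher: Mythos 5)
Your proposal is correct, but it takes a genuinely different route from the paper's. The paper localises: it chooses a finite open cover $U_1,\ldots,U_n$ on which $K \geq b$ together with a subordinate partition of unity $p_1,\ldots,p_n$, writes $D_q^K(\mu)^{1-q} = \sum_i \int_X (K\mu)^{q-1} p_i \d\mu$, and proves each summand continuous using the local lower bound $(K\mu)|_{\overline{U_i}} \geq b \int_X p_i \d\mu$ (inequality~\eqref{eq:Kmu-lb}), with a separate limiting argument at measures $\mu$ whose support misses $\supp p_i$. You instead work globally: you truncate to $F_\epsilon(\mu) = \int_X \bigl(\max(K\mu,\epsilon)\bigr)^{q-1}\d\mu$, whose continuity follows from the same toolkit (Lemmas~\ref{lem:K_*_cts}, \ref{lem:uniform_functorial} and \ref{lem:pairing_cts}), and you get uniform convergence $F_\epsilon \to F$ on $P(X)$ from the weak-type estimate $\mu(\{K\mu < s\}) \leq Cs$ via a layer-cake computation, the resulting error bound $C\epsilon^q/q$ using $q>0$ exactly where the paper's final estimate~\eqref{eq:cts-triv} does. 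The geometric input is essentially shared: your covering argument for the distributional bound (a finite cover by sets on which $K \geq c'$, giving $(K\mu)(x) \geq c'\,\mu(V_{x_i})$ for $x \in V_{x_i}$) is the same diagonal-positivity-plus-compactness device that underlies~\eqref{eq:Kmu-lb}. What your route buys: no case analysis on whether $\supp p_i$ meets $\supp\mu$, a quantitative rate for the truncation error that is uniform in $\mu$, and a reusable weak-type estimate; what the paper's route buys: it avoids the Tonelli/layer-cake bookkeeping and stays entirely within the pairing-continuity machinery already set up. In a full write-up you should add one sentence noting that $(K\mu)^{q-1}$ is defined only $\mu$-almost everywhere (the set $\{K\mu = 0\}$ is $\mu$-null because $K\mu > 0$ on $\supp\mu$, by Proposition~\ref{prop:Kmu_props}), so the pointwise identity behind your layer-cake formula, and the finiteness of $F(\mu)$, are invoked only on that full-measure set.
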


\begin{proof}
First we break the space $X$ into convenient smaller pieces.  Put 
\[
b = \frac{1}{2} \inf_{x \in X} K(x,x) > 0.
\]
By the topological hypotheses on $(X, K)$, we can find
a finite open cover \(U_1,\ldots, U_n\) of \(X\) such that \(K(x,y) \geq
b\) whenever \(x,y \in \overline{U_i}\) for some \(i\), and we can find a
continuous partition of unity \(p_1, \ldots, p_n\) such that \(\supp p_i
\subseteq U_i\) for each \(i\).

For all \(\mu \in P(X)\),
\[
D_q^K(\mu)^{1-q} 
= 
\int_X (K\mu)^{q-1} \d\mu 
= 
\sum_{i=1}^n \int_X (K\mu)^{q-1} p_i \d\mu.
\]
Hence to see that \(D_q^K\) is continuous it will suffice to show that, for
each \(i\), the map $d_i: P(X) \to \R$ defined by
\[
d_i(\mu) = \int_X (K\mu)^{q - 1} p_i \d\mu
\]
is continuous. For the rest of the proof, fix \(i \in \{1,\ldots,n\}\).

For each $\mu \in P(X)$, we can bound $K\mu$ below on $\overline{U_i}$.
Indeed, for all \(x \in \overline{U_i}\),
\begin{align}
\label{eq:Kmu-lb}
(K\mu)(x)       
\geq 
\int_{U_i} K(x,y) p_i(y) \d\mu(y)       
\geq 
b \int_X p_i \d\mu.
\end{align}
Thus, $(K\mu)|_{\overline{U_i}}$ is bounded below by $b \int p_i \d\mu$,
which by Lemma~\ref{lem:disjoint_supps} is strictly positive if \(\supp p_i
\cap \supp \mu \neq \emptyset\).

Now we show that $d_i$ is continuous at each point in the set
\[
P_i(X) = \{\mu \in P(X) \st \supp p_i \cap \supp \mu \neq \emptyset\}.
\]
By Lemma~\ref{lem:disjoint_supps}, $P_i(X)$ is open in $P(X)$.  Thus, it is
equivalent to prove that the restriction of $d_i$ to $P_i(X)$ is continuous.

To do this, we begin by showing that there is a well-defined, continuous
map $G_i: P_i(X) \to C(\overline{U_i})$ given by
\[
G_i(\mu) = (K\mu)^{q - 1}|_{\overline{U_i}}.
\]
It is well-defined because, for each $\mu \in P_i(X)$, the map $K\mu$ is
continuous and strictly positive on $\overline{U_i}$ (as noted
after~\eqref{eq:Kmu-lb}).  To show that $G_i$ is continuous, consider the
following spaces and maps, defined below:
\[
P_i(X) \xto{K_*} 
C_i^+(X) \xto{\text{res}}
C^+(\overline{U_i}) \xto{(-)^{q - 1}}
C^+(\overline{U_i}) \hookrightarrow
C(\overline{U_i}).
\]
Here
\begin{align*}
C_i^+(X)        &
=
\{f \in C(X) \st 
f(x) > 0 \text{ for all } x \in \overline{U_i}\},       \\
C^+(\overline{U_i})     &
=
\{g \in C(\overline{U_i}) \st 
g(x) > 0 \text{ for all } x \in \overline{U_i}\} 
=
\cn{Top}(\overline{U_i}, (0, \infty)).
\end{align*}
The first map $K_*$ is the restriction of $K_*: P(X) \to C(X)$; the
restricted $K_*$ is well-defined by~\eqref{eq:Kmu-lb} and continuous by
Lemma~\ref{lem:K_*_cts}.  The second map is restriction, which is certainly
continuous, the third map $(-)^{q - 1}$ is continuous by
Lemma~\ref{lem:uniform_functorial}, and the last map is inclusion, also
continuous.  The composite of these maps is $G_i$, which is therefore also
continuous, as claimed.

To show that $d_i$ is continuous on $P_i(X)$, consider the chain of maps 
\[
P_i(X) 
\xto{\triangle} 
P_i(X) \times P(X) 
\xto{G_i \times (p_i \cdot -)} 
C(\overline{U_i}) \times P_\leq(\overline{U_i}) 
\xto{\langle -,-\rangle} 
\R
\]
(recalling the definition of $P_\leq$ from before
Lemma~\ref{lem:pairing_cts}).  The first map is the diagonal followed by an
inclusion; it is continuous. In the second, $p_i \cdot -$ is a restriction
of the map $M(X) \to M(\overline{U_i})$ defined by $\mu \mapsto p_i
\mu$, which is also continuous.  Since $G_i$ is continuous, so is $G_i
\times (p_i \cdot -)$.  The third map is continuous by
Lemma~\ref{lem:pairing_cts}(\ref{part:pairing_cts_2}).  And the composite
of the chain is $d_i|_{P_i(X)}$, which is, therefore, also continuous.

Finally, we show that the function \(d_i\) is continuous at all points
\(\mu \in P(X)\) such that \(\supp p_i \cap \supp \mu = \emptyset\). Fix
such a \(\mu\).

Given \(\nu\in P(X)\), if \(\supp p_i \cap \supp \nu = \emptyset\) then
\(d_i(\nu) = 0\), and otherwise
\[
d_i(\nu) 
= 
\int_{\overline{U_i}} (K\nu)^{q-1} p_i \d\nu 
\leq 
\int_{\overline{U_i}} \left( b \int_X p_i \d\nu \right)^{q-1} p_i \d\nu =
b^{q-1} \left( \int_X p_i \d\nu 
\right)^q
\]
(using the bound~\eqref{eq:Kmu-lb} and that \(q < 1\)). So in either case,
\begin{equation}
\label{eq:cts-triv}
0 \leq d_i(\nu) \leq b^{q-1} \left( \int_X p_i \d\nu\right)^q.
\end{equation}
Now as \(\nu \to \mu\) in \(P(X)\), we have
$
\int_X p_i \d\nu \to \int_X p_i \d\mu = 0,
$
so
\[
b^{q-1} \left( \int_X p_i \d\nu\right)^q \to 0 
\]
(since \(q > 0\)).  Hence the bounds~\eqref{eq:cts-triv} give
$d_i(\nu) \to 0 = d_i(\mu)$, as required.
\end{proof}

The remaining case of Proposition~\ref{prop:diversity_cts_mu}, $q = 1$,
will be deduced from the cases $q \in (0, 1)$ and $q \in (1, \infty)$.

\begin{lem}
The diversity function \(D_1^K: P(X) \to \R\) is continuous.
\end{lem}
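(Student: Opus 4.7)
My plan is to squeeze $D_1^K$ between $D_{q'}^K$ and $D_{q''}^K$ for $q' < 1 < q''$, exploiting the two ingredients already established: monotonicity and continuity in the order $q$ (Proposition~\ref{prop:diversity_cts_q}), and continuity in the measure $\mu$ for every $q \in (0, 1) \cup (1, \infty)$ (the two preceding lemmas).

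Fix $\mu \in P(X)$ and $\epsilon > 0$. The first step is to choose orders $q' \in (0,1)$ and $q'' \in (1, \infty)$ close enough to $1$ that
\[
D_{q'}^K(\mu) - D_{q''}^K(\mu) < \epsilon/3;
\]
this is possible because $q \mapsto D_q^K(\mu)$ is continuous on $[-\infty, \infty]$ by Proposition~\ref{prop:diversity_cts_q}(\ref{part:dcq-cts}). By the monotonicity in Proposition~\ref{prop:diversity_cts_q}(\ref{part:dcq-dec}), this forces
\[
D_1^K(\mu) - D_{q''}^K(\mu) < \epsilon/3
\quad \text{and} \quad
D_{q'}^K(\mu) - D_1^K(\mu) < \epsilon/3.
\]

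The second step is to use continuity in $\mu$ at the chosen orders. Since $D_{q'}^K$ and $D_{q''}^K$ are continuous at $\mu$ by the two previous lemmas, there is an open neighbourhood $U$ of $\mu$ in $P(X)$ such that for every $\nu \in U$,
\[
|D_{q'}^K(\nu) - D_{q'}^K(\mu)| < \epsilon/3
\quad \text{and} \quad
|D_{q''}^K(\nu) - D_{q''}^K(\mu)| < \epsilon/3.
\]
Applying monotonicity again to $\nu$ gives $D_{q''}^K(\nu) \leq D_1^K(\nu) \leq D_{q'}^K(\nu)$, so chaining the three inequalities above yields $|D_1^K(\nu) - D_1^K(\mu)| < \epsilon$ for all $\nu \in U$, which proves continuity of $D_1^K$ at $\mu$.

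There is essentially no obstacle here: the work was done in proving the continuity of $D_q^K$ at $q \neq 1$ (where one had to contend with $K\mu$ possibly vanishing) and in establishing continuity of power means in their order; the $q=1$ case drops out by a standard sandwich argument that needs no new estimates and no passage through the explicit formula $D_1^K(\mu) = \exp(-\int \log K\mu \, \mathrm{d}\mu)$.
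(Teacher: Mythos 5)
Your proof is correct and follows essentially the same sandwich argument as the paper: approximate order $1$ by orders $q' < 1 < q''$ using continuity of $D_q^K(\mu)$ in $q$, invoke the already-proved continuity in $\mu$ at those orders, and squeeze using monotonicity in $q$. The only difference is bookkeeping ($\epsilon/3$ with one neighbourhood versus the paper's $\epsilon/2$ with two one-sided neighbourhoods), which is immaterial.
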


\begin{proof}
Let \(\mu \in P(X)\) and \(\epsilon > 0\). Since $D_q^K(\mu)$ is continuous
in $q$ (Proposition~\ref{prop:diversity_cts_q}(\ref{part:dcq-cts})), we can
choose \(q^+ \in (1,\infty)\) such that
\[
|D_1^K(\mu) - D_{q^+}^K(\mu)| < \epsilon/2.
\]
Since \(D_{q^+}^K: P(X) \to \R\) is continuous, we can find a neighbourhood
\(U^+\) of \(\mu\) such that for all \(\nu \in U^+\),
\[
\bigl|D_{q^+}^K(\mu) - D_{q^+}^K(\nu)\bigr| < \epsilon/2.
\]
Then for all \(\nu \in U^+\), since $D_q^K(\nu)$ is decreasing in $q$
(Proposition~\ref{prop:diversity_cts_q}(\ref{part:dcq-dec})),  
\[
D_1^K(\nu) \geq D_{q^+}^K(\nu) > D_1^K(\mu) - \epsilon.
\]
Similarly, we can find a neighbourhood \(U^-\) of \(\mu\) such that for all
\(\nu \in U^-\),
\[
D_1^K(\nu) < D_1^K(\mu) + \epsilon 
\]
Hence $|D_1^K(\nu) - D_1^K(\mu)| < \epsilon$ for all $\nu \in U^+ \cap
U^-$. 
\end{proof}

This completes the proof of Proposition~\ref{prop:diversity_cts_mu}: the
diversity function of each finite positive order is continuous.

\begin{rmk}
Proposition~\ref{prop:diversity_cts_mu} excludes the cases $q = 0$ and $q =
\infty$.  Diversity of order $0$ is not continuous even in the simplest
case of a finite set and the identity similarity matrix; for as we saw in
Example~\ref{eg:fin-div-1}, $D_0^I(\vec{p})$ is the cardinality of $\supp
\vec{p}$, which is not continuous in $\vec{p}$.  Diversity of order
$\infty$ need not be continuous either.  For example, take $X = \{1, 2,
3\}$ and the similarity matrix
\[
Z = 
\begin{pmatrix}
1       &1      &0      \\
1       &1      &1      \\
0       &1      &1
\end{pmatrix},
\]
and put $\vec{p} = (1/2 - t, 2t, 1/2 - t)$.  Then
$D_\infty^Z(\vec{p})$ is $1$ if $t \in (0, 1/2)$, but $2$ if $t = 0$.
\end{rmk}


\section{Magnitude}
\label{S_magnitude}

To show that maximum diversity and maximum entropy are well-defined, we
first have to define a closely related invariant, magnitude.  Magnitude has
been studied at various levels of generality, including finite enriched
categories and compact metric spaces, for which it has strong geometric
content~\cite{Leinstermagnitude2017}.  We will define the magnitude of a
space with similarities.

First we consider signed measures for which every point has typicality $1$.

\begin{defn}\label{def_weighting}
Let \(X = (X,K)\) be a space with similarities. A \demph{weight measure}
on \(X\) is a signed measure \(\mu \in M(X)\) such that $K\mu \equiv 1$ on
$X$.
\end{defn}

This generalises the definition of weight measure on a compact metric space
(Section~1.1 of \cite{Willertonmagnitude2014}).  Note that despite our
convention that `measure' means positive measure, a weight measure is a
\emph{signed} measure.

\begin{example}
\label{eg:mag-fin}
Let $X = \{1, \ldots, n\}$, writing $Z_{ij} = K(i, j)$ as usual.  Then a
weight measure on $X$ is a vector \(\vec{w} \in \R^n\) such that
\((Z\vec{w})_i = 1\) for \(i = 1, \ldots, n\).  If $Z$ is invertible then
there is exactly one weight measure, but in general there may be none or
many.

Even if $Z$ has many weight measures, the total weight $\sum_i w_i$ turns
out to be independent of the weighting $\vec{w}$ chosen, as long as $Z$ is
symmetric (or, more generally, the transpose of $Z$ admits a weighting
too).  This common quantity $\sum_i w_i$ is called the magnitude of $(X,
K)$, and its independence of the choice of weighting is a special case of
the following result.
\end{example}


\begin{lem}
\label{lem:wtg-ind}
Let $(X, K)$ be a symmetric space with similarities.  Then
$\mu(X) = \nu(X)$ for any weight measures $\mu$ and $\nu$ on $X$.
\end{lem}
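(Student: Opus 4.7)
The plan is to compute $\mu(X)$ and $\nu(X)$ both as the double integral $\int\!\int K(x,y)\,d\mu(x)\,d\nu(y)$, using the defining equation $K\mu \equiv 1 \equiv K\nu$ along with the symmetry of $K$ and Fubini's theorem.

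More precisely, first I would write
\[
\mu(X) = \int_X 1 \, d\mu = \int_X (K\nu) \, d\mu = \int_X \!\int_X K(x,y) \, d\nu(y) \, d\mu(x),
\]
using that $K\nu \equiv 1$ by assumption on $\nu$, together with the definition of $K\nu$. Next, I would swap the order of integration via Fubini's theorem to get
\[
\int_X \!\int_X K(x,y) \, d\mu(x) \, d\nu(y),
\]
then apply the symmetry hypothesis $K(x,y) = K(y,x)$ to recognise the inner integral as $(K\mu)(y) \equiv 1$, concluding that the whole expression equals $\nu(X)$.

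The only point requiring a brief justification is the application of Fubini's theorem, since $\mu$ and $\nu$ are \emph{signed} measures rather than positive ones. This is handled by taking the Jordan decompositions $\mu = \mu^+ - \mu^-$ and $\nu = \nu^+ - \nu^-$, which are finite positive Radon measures (as $\mu, \nu \in M(X)$), and noting that $K$ is continuous on the compact Hausdorff space $X \times X$ and therefore bounded. The standard Fubini theorem then applies to each of the four product integrals $\int\!\int K \, d\mu^{\pm} d\nu^{\pm}$, and the desired exchange of integration order follows by linearity. No other obstacles arise; the symmetry hypothesis is used exactly once, precisely where it cannot be dispensed with.
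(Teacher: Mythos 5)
Your proof is correct and follows essentially the same route as the paper: both compute $\mu(X)$ and $\nu(X)$ as the double integral of $K$ against $\mu$ and $\nu$, then conclude by symmetry of $K$ and an interchange of the order of integration. Your explicit justification of the interchange via Jordan decomposition and boundedness of $K$ is a careful filling-in of a step the paper dispatches by simply citing Tonelli's theorem.
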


\begin{proof}
Since $\nu$ is a weight measure,
\[
\mu(X) 
= 
\int_X \d\mu(x) 
= 
\int_X \left( \int_X K(x, y) \d\nu(y) \right) \d\mu(x).
\]
Since $\mu$ is a weight measure,
\[
\nu(X)
=
\int_X \d\nu(y)
=
\int_X \left( \int_X K(y, x) \d\mu(x) \right) \d\nu(y).
\]
So by symmetry of $K$ and Tonelli's theorem, $\mu(X) = \nu(X)$.
\end{proof}

This lemma makes the following definition valid.

\begin{defn}\label{def_magnitude}
Let \((X, K)\) be a symmetric space with similarities admitting at least
one weight measure. The \demph{magnitude} of \((X,K)\) is
\[
|(X,K)|= \mu(X),
\]
for any weight measure $\mu$ on $(X, K)$.  We often write $|(X, K)|$ as
just $|X|$.
\end{defn}

We will mostly use \emph{positive} weight measures, that is,
weight measures that are positive measures. (In an unfortunate clash of
terminology, a weight measure on a finite set is positive if and only if
the corresponding vector is nonnegative.)

\begin{lem}\label{lem:zero_magnitude}
Let $(X, K)$ be a symmetric space with similarities admitting a positive
weight measure.  Then $|X| \geq 0$, with equality if and only if $X =
\emptyset$. 
\end{lem}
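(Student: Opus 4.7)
The inequality $|X| \geq 0$ is immediate, because by hypothesis $X$ admits a positive weight measure $\mu$, and $|X|$ is defined to be $\mu(X)$, which is automatically nonnegative. So the content of the lemma lies in the equality case.

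For the backward direction of the equivalence, if $X = \emptyset$ then the zero measure is (vacuously) a positive weight measure, and $|X| = 0$. For the forward direction, I would argue by contrapositive: assume $X \neq \emptyset$ and show that $\mu(X) > 0$ for any positive weight measure $\mu$. Pick any point $x_0 \in X$. Since $\mu$ is a weight measure, $(K\mu)(x_0) = 1$. On the other hand, $K(x_0, -)$ is continuous on the compact space $X$, hence bounded; set $C = \|K(x_0, -)\|_\infty \in [0, \infty)$. Since $\mu$ is positive, we can estimate
\[
1 = (K\mu)(x_0) = \int_X K(x_0, y) \d\mu(y) \leq C \cdot \mu(X),
\]
which already forces $C > 0$ and $\mu(X) \geq 1/C > 0$, as required.

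There is no serious obstacle here: the argument uses only (a) positivity of $\mu$ for the first inequality, (b) compactness of $X$ to bound $K(x_0,-)$, and (c) the defining identity $K\mu \equiv 1$ evaluated at a single well-chosen point. Note that symmetry of $K$ is not actually used in this particular lemma; it was needed only to make Definition~\ref{def_magnitude} well-posed via Lemma~\ref{lem:wtg-ind}.
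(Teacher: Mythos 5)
Your proof is correct and follows essentially the same route as the paper: evaluate the weight condition $K\mu \equiv 1$ at a single point of a nonempty $X$ to force $\mu(X) > 0$, the only (harmless) difference being that you make this quantitative via the bound $1 \leq \|K(x_0,-)\|_\infty \, \mu(X)$, where the paper simply observes that $\mu$ must be nonzero and hence, being a positive measure, has $\mu(X) > 0$. Your closing remark that symmetry is needed only for the well-posedness of the magnitude (via Lemma~\ref{lem:wtg-ind}) is also accurate.
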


\begin{proof}
The inequality is immediate from the definition of magnitude, as is the
fact that $|\emptyset| = 0$.  Now suppose that $X$ is nonempty.  Choose $x
\in X$ and a positive weight measure $\mu$ on \((X, K)\).  Since \(\int_X
K(x,-) \d\mu= 1\), the measure \(\mu\) is nonzero. Hence, \(|X| = \mu(X) >
0\).
\end{proof}

Let $(X, K)$ be a space with similarities.  Given a closed subset \(Y\) of
\(X\), we regard $Y$ as a space with similarities by restriction of the
similarity kernel \(K\).  Any measure $\nu \neq 0$ on $Y$ can be normalised
and extended by zero to give a probability measure \(\wext{\nu}\) on \(X\),
defined by
\[
\wext{\nu}(U) = \frac{\nu(U \cap Y)}{\nu(Y)}
\]
for Borel sets \(U \subseteq X\).  In particular, whenever $\nu$ is a
positive weight measure on $Y \neq \emptyset$, we have $\nu \neq 0$ (by
Lemma~\ref{lem:zero_magnitude}) and 
\[
\wext{\nu}(U) = \frac{\nu(U \cap Y)}{|Y|}
\]
for Borel sets \(U \subseteq X\).  The construction $\nu \mapsto
\wext{\nu}$ relates the notion of weight measure to that of balanced
measure (defined in Remark~\ref{rmk:typicality_constant}) as follows.

\begin{lem}\label{lem:balanced_tfae}
Let $(X, K)$ be a symmetric space with similarities.  The following are
equivalent for a probability measure $\mu$ on $X$:
\begin{enumerate}[(i)]
\item 
\label{part:it-const}
$\mu$ is balanced (that is, \(K\mu\) is constant on \(\supp \mu\));

\item 
\label{part:it-flat}
the function \(q \mapsto D_q^K(\mu)\) is constant on \([-\infty,
  \infty]\);

\item 
\label{part:it-supp}
\(\mu = \wext{\nu}\) for some positive weight measure \(\nu\) on
$\supp\mu$;

\item 
\label{part:it-wext}
\(\mu = \wext{\nu}\) for some positive weight measure \(\nu\) on
some nonempty closed subset $Y$ of $X$.
\end{enumerate}
When these conditions hold, \(D_q^K(\mu) = |Y|\) for all nonempty closed $Y
\subseteq X$ admitting a positive weight measure $\nu$ such that
$\wext{\nu} = \mu$, and all \(q \in [-\infty, \infty]\).
\end{lem}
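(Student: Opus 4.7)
The plan is to establish the equivalences via a cycle: (i) $\Longleftrightarrow$ (ii) is essentially a citation, (iii) $\Longrightarrow$ (iv) is immediate by taking $Y = \supp\mu$, and the two substantive steps are (iv) $\Longrightarrow$ (i) (which will also deliver the final formula $D_q^K(\mu) = |Y|$) and (i) $\Longrightarrow$ (iii). The equivalence of (i) and (ii) is exactly what was recorded in Remark~\ref{rmk:typicality_constant}, itself a direct application of Proposition~\ref{prop:powermean_mono} to $1/K\mu$: balancedness of $\mu$ is the condition that $1/K\mu$ is $\mu$-almost everywhere constant, which in turn is precisely the case in which all power means $M_t(\mu, 1/K\mu)$ coincide.

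For (iv) $\Longrightarrow$ (i) together with the concluding assertion, I would unfold the definition of $\wext{\nu}$: since $\mu(U) = \nu(U \cap Y)/|Y|$ for Borel $U \subseteq X$, and $Y$ is closed (so $\mu$ is concentrated on $Y$), one obtains the change-of-variables identity $\int_X f \d\mu = (1/|Y|) \int_Y f \d\nu$. Applying this with $f = K(x, \cdot)$ for $x \in Y$ and using that $\nu$ is a weight measure on $Y$ gives $(K\mu)(x) = 1/|Y|$. Because $\supp\mu \subseteq Y$, the function $K\mu$ is identically $1/|Y|$ on $\supp\mu$, so $\mu$ is balanced and, by Remark~\ref{rmk:typicality_constant}, $D_q^K(\mu) = |Y|$ for every $q \in [-\infty, \infty]$.

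The main direction is (i) $\Longrightarrow$ (iii). Assume $\mu$ is balanced and set $Y = \supp\mu$. Let $c$ be the constant value of $K\mu$ on $Y$; by Proposition~\ref{prop:Kmu_props}(\ref{part:Kp-supp}), $c > 0$. Define a positive measure $\nu$ on $Y$ by $\nu = (1/c)\, \mu|_Y$. Since $\mu$ is concentrated on $Y$, for every $x \in Y$
\[
(K\nu)(x) = \frac{1}{c}\int_Y K(x, y) \d\mu(y) = \frac{1}{c}\int_X K(x, y) \d\mu(y) = \frac{(K\mu)(x)}{c} = 1,
\]
so $\nu$ is a positive weight measure on $Y$. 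Hence $|Y| = \nu(Y) = \mu(X)/c = 1/c$, and for any Borel $U \subseteq X$
\[
\wext{\nu}(U) = \frac{\nu(U \cap Y)}{|Y|} = \frac{\mu(U \cap Y)/c}{1/c} = \mu(U \cap Y) = \mu(U),
\]
giving $\wext{\nu} = \mu$.

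The main point requiring care is the bookkeeping that makes the main direction work: the rescaling factor $1/c$ chosen to turn $\mu|_Y$ into a weight measure and the normalisation factor $1/|Y|$ built into the definition of $\wext{\nu}$ are a priori independent constants, and one has to verify that they match so that $\wext{\nu}$ recovers $\mu$ exactly rather than a rescaling of it. The symmetry of $K$ does not enter the constructions directly, but is used in the background via Lemma~\ref{lem:wtg-ind} to guarantee that $|Y|$, which appears in the statement of conditions (iii) and (iv) and in the final formula, is unambiguously defined.
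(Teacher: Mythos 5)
Your proposal is correct and follows essentially the same route as the paper's proof: (i)$\Leftrightarrow$(ii) via the power-mean monotonicity, (i)$\Rightarrow$(iii) by rescaling $\mu|_{\supp\mu}$ by the constant value of $K\mu$ and checking the normalisations cancel, (iii)$\Rightarrow$(iv) trivially, and (iv)$\Rightarrow$(i) by computing $(K\mu)(x)=1/\nu(Y)=1/|Y|$ on $\supp\mu$, which also yields the final formula. Your closing remarks about the matching of the two constants and the role of symmetry (via Lemma~\ref{lem:wtg-ind}) are accurate and consistent with the paper's treatment.
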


\begin{proof}
The equivalence of~(\ref{part:it-const}) and~(\ref{part:it-flat}) follows from
Proposition~\ref{prop:diversity_cts_q}(\ref{part:dcq-dec}).

Now assuming~(\ref{part:it-const}), we prove~(\ref{part:it-supp}).  Write
$c$ for the constant value of $K\mu$ on $\supp\mu$.  Then $c > 0$ by
Proposition~\ref{prop:Kmu_props}(\ref{part:Kp-supp}), so we can define a
measure \(\nu\) on $\supp\mu$ by $\nu(W) = \mu(W)/c$ for all Borel sets $W
\subseteq \supp\mu$.  This \(\nu\) is a weight measure on \(\supp\mu\),
since for all \(y \in \supp\mu\),
\[
(K\nu)(y) 
= 
\int_{\supp\mu} K(y,-)\d\nu 
= 
\frac{1}{c} \int_X K(y,-) \d\mu
= 
\frac{1}{c} (K\mu)(y) 
=
1.
\]
Moreover, $\wext{\nu} = \mu$: for given a Borel set $U \subseteq X$,
\[
\wext{\nu}(U) 
= 
\frac{\nu(U \cap \supp\mu)}{\nu(\supp\mu)} 
= 
\frac{\mu(U \cap \supp\mu)}{\mu(\supp\mu)} 
= 
\mu(U),
\]
proving~(\ref{part:it-supp}).

Trivially, (\ref{part:it-supp}) implies~(\ref{part:it-wext}).  Finally, we
assume~(\ref{part:it-wext}) and prove~(\ref{part:it-const}).  Take $Y$ and
$\nu$ as in~(\ref{part:it-wext}).  For all \(x \in \supp\mu\),
\[
(K\mu)(x) 
= 
\int_X K(x,-) \d\wext{\nu}
= 
\frac{1}{\nu(Y)} \int_Y K(x,-) \d\nu 
= 
\frac{1}{\nu(Y)}
\]
This proves~(\ref{part:it-const}). It also proves the final statement: for
by Remark~\ref{rmk:typicality_constant},
$
D_q^K(\mu) = \nu(Y) = |Y|
$ 
for all \(q \in [-\infty, \infty]\).
\end{proof}


\section{Balanced and maximising measures}
\label{S_prep_lemmas}

In the case of the Kronecker delta on a finite discrete space, it is
trivial to maximise diversity.  Indeed, an elementary classical result
states that for each $q \in [0, \infty]$, the R\'enyi entropy $H_q^I$ of
order $q$ (Example~\ref{eg:fin-div-1}) is maximised by the uniform
distribution, and that unless $q = 0$, the uniform distribution is unique
with this property.  The same is therefore true of the diversity measures
$D_q^I$.

For a finite space with an arbitrary similarity kernel, maximising measures
are no longer uniform~\cite{LeinsterMaximizing2016}.  We cannot, therefore,
expect that on a general space with similarities, diversity is maximised by
the `uniform' measure (whatever that might mean).  Nevertheless, maximising
measures have a different uniformity property: they are balanced.  That is
the main result of this section.

\begin{rmk}
\label{rmk:q-range}
We usually restrict the parameter $q$ to lie in the range $[0, \infty]$.
Even in the simplest case of the Kronecker delta on a finite set, $D_q^K$
and $H_q^K$ behave quite differently for negative $q$ than for positive
$q$.  When $q < 0$, the uniform measure no longer maximises $D_q^I$ or
$H_q^I$, and in fact \emph{minimises} them among all measures of full
support (as can be shown using
Proposition~\ref{prop:diversity_cts_q}(\ref{part:dcq-dec})).
\end{rmk}

\glob{For the rest of this section, let \((X, K)\) be a symmetric space
  with similarities.}

\begin{defn}\label{def_maximising}
For \(q \in [0, \infty]\), a probability measure on \(X\) is
\demph{\(q\)-maximising} if it maximises \(D_q^K\). It is
\demph{maximising} if it is $q$-maximising for all \(q \in [0,\infty]\).
\end{defn}

We will show in Section~\ref{S_main} that any measure that is
$q$-maximising for some $q > 0$ is, in fact, maximising.  The proof will
depend on the next result: any measure that is $q$-maximising for some
$q \in (0, 1)$ is balanced.

This result can be understood as follows.  In ecological terminology, if a
species distribution is \emph{not} balanced then not all species are
equally typical, and it is intuitively plausible that transferring a little
abundance from the most typical species to the least typical increases
diversity.  Thus, the diversity of a non-balanced distribution should not
be maximal; equivalently, a distribution that maximises diversity should be
balanced.

We prove this using a variational argument.  The shape of the proof is
similar to that of the finite case (\cite{LeinsterMaximizing2016},
Section~5), but the generalisation to compact spaces makes the argument
much more delicate.

\begin{prop}\label{prop:qmax_balanced}
For \(q \in (0,1)\), every \(q\)-maximising measure on $(X, K)$ is balanced.
\end{prop}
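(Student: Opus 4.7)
The plan is a variational argument by contradiction. Assume \(\mu\) is \(q\)-maximising but not balanced; by Remark \ref{rmk:typicality_constant} there are points \(x_0, x_1 \in \supp\mu\) with \((K\mu)(x_0) \neq (K\mu)(x_1)\). I would work throughout with the equivalent objective
\[
J(\mu) := D_q^K(\mu)^{1-q} = \int_X (K\mu)^{q-1}\,\d\mu,
\]
which (since \(1 - q > 0\)) is maximised at exactly the same probability measures as \(D_q^K\); the goal is to build a small signed perturbation of \(\mu\) that strictly increases \(J\).

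Because \(\mu\) may have no atoms, I would first use Lemma \ref{lem:approximate_delta} to approximate the Dirac masses \(\delta_{x_0}, \delta_{x_1}\) by probability measures \(\nu_0 = u_0\mu\) and \(\nu_1 = u_1\mu\) absolutely continuous with respect to \(\mu\), where \(u_0, u_1 \in C(X)\) are nonnegative. The equicontinuous family supplied to Lemma \ref{lem:approximate_delta} should contain the continuous test functions appearing in the derivative formula below, namely \((K\mu)^{q-1}\) and \(K\bigl((K\mu)^{q-2}\mu\bigr)\). For \(|s|\) small enough,
\[
\mu_s := \mu + s(\nu_0 - \nu_1) = \bigl(1 + s(u_0 - u_1)\bigr)\,\mu
\]
is then a probability measure on \(X\), since \(u_0, u_1\) are bounded on the compact space \(X\).

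The central computation is to differentiate \(\Phi(s) := J(\mu_s)\) at \(s = 0\). Justifying differentiation under the integral sign via Lemma \ref{lem:diff_int} is the main analytic obstacle: one needs a uniform integrable bound on the \(s\)-derivative of \((K\mu_s)^{q-1}\bigl(1 + s(u_0 - u_1)\bigr)\). The danger is that for \(q \in (0,1)\) the map \(t \mapsto t^{q-1}\) blows up at \(t = 0\), so \(K\mu_s\) must stay uniformly bounded below on \(\supp\mu\). This follows from Proposition \ref{prop:Kmu_props}: \(K\mu\) is continuous and strictly positive on the compact set \(\supp\mu\), hence bounded below by some \(\kappa > 0\), and \(\|K\mu_s - K\mu\|_\infty = O(s)\), so \(K\mu_s \geq \kappa/2\) on \(\supp\mu\) for small \(|s|\). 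Once the differentiation is justified, the symmetry of \(K\) (used via Tonelli's theorem to swap the integrations in the cross term) yields
\[
\Phi'(0) = \int_X F\,\d(\nu_0 - \nu_1), \qquad F := (K\mu)^{q-1} + (q-1)\,K\bigl((K\mu)^{q-2}\mu\bigr).
\]

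The remaining step is to derive a contradiction from this. Since both \(\sigma = \nu_0 - \nu_1\) and \(-\sigma\) are admissible perturbations, maximality of \(\mu\) forces \(\Phi'(0) = 0\); and by choosing the approximations in Lemma \ref{lem:approximate_delta} sharp enough (applied to the continuous test function \(F\)), this translates into \(F(x_0) = F(x_1)\). As \(x_0, x_1 \in \supp\mu\) were arbitrary, \(F\) is constant on \(\supp\mu\), with constant value pinned at \(qJ(\mu)\) by pairing against \(\mu\) and using symmetry. The hard part, and the true content of the proposition, is extracting from the non-local pointwise equation
\[
(K\mu)^{q-1} + (q-1)\,K\bigl((K\mu)^{q-2}\mu\bigr) = qJ(\mu) \quad \text{on } \supp\mu
\]
the stronger conclusion that \(K\mu\) is itself constant on \(\supp\mu\). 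I expect this to require the strict convexity of \(t \mapsto t^{q-1}\) on \((0,\infty)\) for \(q \in (0,1)\), combined with the symmetry and non-negativity of \(K\), via a Jensen-type inequality applied to the positive operator \(K\bigl((\,\cdot\,)^{q-2}\mu\bigr)\); ruling out any non-constant solution of the above equation is the main obstacle.
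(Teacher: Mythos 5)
Your variational setup is essentially the paper's: the same perturbation $\mu_s = \bigl(1 + s(u_0 - u_1)\bigr)\mu$ built from Lemma~\ref{lem:approximate_delta}, the same use of Lemma~\ref{lem:diff_int} with a uniform lower bound on $K\mu_s$ over $\supp\mu$, and your first-order condition is correct: $\Phi'(0) = \int F \,\d(\nu_0 - \nu_1) = 0$ with $F = (K\mu)^{q-1} + (q-1)\,K\bigl((K\mu)^{q-2}\mu\bigr)$, constant value $qJ(\mu)$ on $\supp\mu$. But the proof is incomplete exactly where you flag it: you stop at this non-local Euler--Lagrange equation and defer the deduction that $K\mu$ itself is constant on $\supp\mu$, speculating that it needs a Jensen-type convexity argument for the operator. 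That deduction is the real content of the proposition, and it is missing from your proposal; as written, nothing rules out a non-constant $K\mu$ satisfying your equation.

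The gap is closed not by convexity but by an elementary monotone comparison at the extremal points of $K\mu$, which is precisely how the paper argues (it fixes $x^-, x^+ \in \supp\mu$ attaining $\inf_{\supp\mu} K\mu$ and $\sup_{\supp\mu} K\mu$ from the outset and runs the whole perturbation with explicit $\epsilon$'s, never asserting constancy of $F$). Concretely: since $q - 2 < 0$, on $\supp\mu$ one has $(K\mu)^{q-2} \leq (K\mu)(x^-)^{q-2}$, hence
\[
K\bigl((K\mu)^{q-2}\mu\bigr)(x^-)
\leq
(K\mu)(x^-)^{q-2}\,(K\mu)(x^-)
=
(K\mu)(x^-)^{q-1},
\]
and since $q - 1 < 0$ this gives $F(x^-) \geq q\,(K\mu)(x^-)^{q-1}$; symmetrically $F(x^+) \leq q\,(K\mu)(x^+)^{q-1}$. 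Your identity $F(x^-) = F(x^+)$ then forces $(K\mu)(x^-)^{q-1} \leq (K\mu)(x^+)^{q-1}$, while $(K\mu)(x^-) \leq (K\mu)(x^+)$ and $q - 1 < 0$ give the reverse inequality; hence $(K\mu)(x^-) = (K\mu)(x^+)$ and $K\mu$ is constant on $\supp\mu$, i.e.\ $\mu$ is balanced (Remark~\ref{rmk:typicality_constant}). With this short finish added (your two-element test family $\{(K\mu)^{q-1}|_{\supp\mu},\, K((K\mu)^{q-2}\mu)|_{\supp\mu}\}$ suffices for it, whereas the paper's family also contains all $K(x,-)|_{\supp\mu}$ because of its different bookkeeping), your argument becomes essentially the paper's proof.
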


\begin{proof}
Let \(q \in (0,1)\) and let \(\mu\) be a $q$-maximising measure on $(X, K)$.
Since \(K\mu\) is continuous and $\supp\mu$ is compact, we can choose
\(x^-, x^+ \in \supp \mu\) such that 
\[
(K\mu)(x^-) = \inf_{\supp \mu} K\mu,
\qquad
(K\mu)(x^+) = \sup_{\supp \mu} K\mu.
\]
To prove that $\mu$ is balanced, it will suffice to show that \((K\mu)(x^-)
= (K\mu)(x^+)\).

Let $\epsilon > 0$.  We first construct functions $u^\pm$ such that the
measures $u^\pm \mu$ approximate the Dirac measures at $x^\pm$, using
Lemma~\ref{lem:approximate_delta}.  Write
\[
E = 
\{ (K\mu)^{q - 1}|_{\supp \mu} \} 
\cup
\{ K(x, -)|_{\supp\mu} \st x \in X \} 
\subseteq
C(\supp\mu)
\]
(which is well-defined by Lemma~\ref{lem:Kmu_cts} and
Proposition~\ref{prop:Kmu_props}(\ref{part:Kp-supp})).  Then $E$ is
compact, since it is the union of a singleton with the image of the compact
space $X$ under the composite of continuous maps
\[
X \xto{\overline{K}} C(X) \xto{\text{restriction}} C(\supp\mu)
\]
(using Lemma~\ref{lem:Kbar_cts}).  Hence $E$ is equicontinuous.  So by
Lemma~\ref{lem:approximate_delta}, we can choose a nonnegative function
$u^- \in C(X)$ such that $\int_X u^- \d\mu = 1$ and
\begin{align*}
\left| \int_X (K\mu)^{q-1} \d(u^-\mu) - (K\mu)(x^-)^{q-1}\right| 
&
\leq \epsilon,     \\
\left| \int_X K(x,-) \d(u^-\mu) - K(x, x^-) \right| 
&
\leq \epsilon,
\end{align*}
the latter for all $x \in X$. Choose $u^+$ similarly for $x^+$.

Since \(u^- - u^+\) is bounded, we can choose an open interval \(I
\subseteq \R\), containing \(0\), such that the function \(1+ t\left(u^- -
u^+\right) \in C(X)\) is strictly positive for each \(t \in I\).  Then for
each \(t \in I\), we have a probability measure
\[
\mu_t = (1 + t(u^- - u^+)) \mu 
\]
on $X$, with $\supp\mu_t = \supp\mu$.  Note that $\mu_0 = \mu$.

We will exploit the fact that $D_q^K(\mu_t)$ has a local maximum at $t =
0$, showing that the function $t \mapsto D_q^K(\mu_t)^{1 - q}$ is
differentiable at $0$ and, therefore, has derivative $0$ there.  For each
$t \in I$,
\begin{align}
D_q^K(\mu_t)^{1 - q}    &
=
\int (K\mu_t)^{q - 1} \d\mu
+
t \int (K\mu_t)^{q - 1} d\bigl((u^- - u^+)\mu\bigr) 
\nonumber       \\
&
=
a(t) + b(t),
\label{eq:max_bal_0}      
\end{align}
say.  (Since $\supp(K\mu_t) \supseteq \supp(\mu_t) = \supp\mu$, the
integrand $(K\mu_t)^{q - 1}$ is well-defined and continuous on $\supp\mu$,
and both integrals are finite.)  We now show that $a(t)$ and $b(t)$ are
differentiable at $t = 0$, compute their derivatives there, and bound the
derivatives below.

To differentiate the integral $a(t)$, we use Lemma~\ref{lem:diff_int}.
Choose a bounded open subinterval $J$ of $I$, also containing $0$, with
$\overline{J} \subseteq I$.  We now verify that the function $f:
X \times J \to \R$ defined by
\[
f(x, t) 
= 
(K\mu_t)(x)^{q - 1}
=
\Bigl[ 
(K\mu)(x) + t K\bigl((u^- - u^+) \mu\bigr)(x)
\Bigr]^{q - 1}
\]
satisfies the conditions of Lemma~\ref{lem:diff_int}.

We have already checked condition~\ref{lem:diff_int}(\ref{part:di-int}).
For condition~\ref{lem:diff_int}(\ref{part:di-diff}): for all $x \in
\supp\mu$, the function \(f(x, -)\) is differentiable on $I$ (hence $J$),
with derivative
\[
t 
\mapsto 
\frac{\partial f}{\partial t} (x,t) 
= 
(q-1) 
\Bigl[
(K\mu)(x) + t K\bigl((u^- - u^+) \mu\bigr) (x)
\Bigr]^{q - 2}
\cdot
K\bigl((u^- - u^+)\mu\bigr)(x).
\]
For condition~\ref{lem:diff_int}(\ref{part:di-bound}), this formula shows
that $\partial f/\partial t$ is continuous on $(\supp\mu) \times I$.  Hence
$|\partial f/\partial t|$ is continuous on the compact space $(\supp\mu)
\times \overline{J}$, and therefore bounded on $(\supp\mu) \times J$, with
supremum $H$, say.  The constant function $H$ on $X$ is
$\mu$-integrable, and $\bigl|\tfrac{\partial f}{\partial t}(x, t)\bigr|
\leq H$ for all $x \in \supp \mu$ and $t \in J$, as required.

Now applying Lemma~\ref{lem:diff_int}, $a(t)$ is differentiable at $t = 0$
with
\begin{align}
a'(0)   &
= 
(q - 1) 
\int (K\mu)(x)^{q - 2} K\bigl((u^- - u^+)\mu\bigr)(x) \d\mu(x)        
\nonumber       \\
&
= 
(q-1) \int (K\mu)(x)^{q-2} 
\biggl( \int K(x, y) \d ((u^- - u^+) \mu)(y) \biggr)
\d\mu(x) 
\nonumber       \\
&
\geq
(q-1) 
\int (K\mu)^{q-2} \left( K(-, x^-) - K(-, x^+) + 2 \epsilon\right) \d\mu, 
\label{eq:a-bound}
\end{align}
where the inequality follows from the defining properties of \(u^-\) and
\(u^+\) and the fact that \(q < 1\).

Next, consider $b(t)$.  By definition of derivative, $b$ is differentiable
at $0$ if and only if the limit
\begin{align*}
\lim_{t \to 0} 
\int (K\mu_t)^{q-1} \d((u^- - u^+)\mu)
\end{align*}
exists, and in that case $b'(0)$ is that limit.  As $t \to 0$, we have
$K\mu_t \to K\mu$ in $C(\supp\mu)$, so $(K\mu_t)^{q - 1} \to (K\mu)^{q -
  1}$ in $C(\supp\mu)$ (by Lemma~\ref{lem:uniform_functorial}).  Hence
$b'(0)$ exists and is given by
\[
b'(0) = \int_X (K\mu)^{q - 1}  \d\bigl( (u^- - u^+) \mu\bigr).
\]
By the defining properties of \(u^-\) and \(u^+\), it follows that
\begin{align}
\label{eq:b-bound}
b'(0)
\geq 
(K\mu)(x^-)^{q-1} - (K\mu)(x^+)^{q-1} - 2\epsilon.
\end{align}

Returning to equation~\eqref{eq:max_bal_0}, we have now shown that both $a(t)$
and $b(t)$ are differentiable at $t = 0$.  So too, therefore, is
$D_q^K(\mu_t)^{1 - q}$.  But by the maximality of $\mu$, its derivative
there is 0.  Hence the bounds~\eqref{eq:a-bound} and~\eqref{eq:b-bound}
give
\begin{align}
0 &
\geq
(q-1) 
\int (K\mu)^{q-2} \left( K(-, x^-) - K(-, x^+) + 2\epsilon \right) \d\mu
+ 
(K\mu)(x^-)^{q-1} - (K\mu)(x^+)^{q-1} - 2\epsilon 
\nonumber       \\ 
&
= 
(q-1) \left( 
\int (K\mu)^{q-2} K(x^-, -) \d\mu 
- 
\int (K\mu)^{q-2} K(x^+, -) \d\mu 
+ 
2\epsilon \int (K\mu)^{q - 2} \d\mu 
\right) \nonumber \\
& 
\phantom{= \mbox{}}
\mbox{} 
+ (K\mu)(x^-)^{q-1} - (K\mu)(x^+)^{q-1} - 2\epsilon,
\label{eq:ie_calc_1}
\end{align}
using the symmetry of $K$.  Consider the first integral
in~\eqref{eq:ie_calc_1}.  By definition of $x^-$, and since $q - 2 < 0$, we
have
\[
\int (K\mu)^{q - 2} K(x^-, -) \d\mu
\leq
(K\mu)(x^-)^{q - 2} \int K(x^-, -) \d\mu
=
(K\mu)(x^-)^{q - 1}.
\]
A similar statement holds for $x^+$.  Since $q - 1 < 0$, it follows
from~\eqref{eq:ie_calc_1} that
\begin{equation}
\label{eq:ie_calc_2}
0
\geq 
q \left( (K\mu)(x^-)^{q-1} - (K\mu)(x^+)^{q-1} \right) 
- 
2\epsilon \left( (1-q) \int (K\mu)^{q-2} \d\mu + 1 \right).
\end{equation}
Put \(c = (1-q) \int (K\mu)^{q-2} \d\mu + 1\).  Then
by~\eqref{eq:ie_calc_2}, the defining properties of \(x^-\) and \(x^+\),
and the fact that $0 < q < 1$,
\[
2\epsilon c 
\geq
q \left( (K\mu)(x^-)^{q-1} - (K\mu)(x^+)^{q-1} \right) 
\geq 
0.
\]
Taking \(\epsilon \to 0\), we see that \((K\mu)(x^-) = (K\mu)(x^+)\), which
proves the result.
\end{proof}

\begin{cor}\label{cor:qmax_balanced}
Assume that $X$ is nonempty.  For each \(q \in (0,1)\), there exists a
balanced \(q\)-maximising probability measure on~\(X\).
\end{cor}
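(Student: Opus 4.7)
The plan is to combine the existence of a maximiser (from compactness and continuity) with the balanced-ness result from Proposition~\ref{prop:qmax_balanced}. Since $X$ is nonempty, the space $P(X)$ is nonempty (it contains a Dirac measure $\delta_x$ for any $x \in X$). By the Banach--Alaoglu theorem, $P(X)$ is compact Hausdorff in the weak$^*$ topology. For $q \in (0,1) \subseteq (0, \infty)$, Proposition~\ref{prop:diversity_cts_mu} tells us that $D_q^K : P(X) \to \R$ is continuous. A continuous real-valued function on a nonempty compact space attains its supremum, so there exists $\mu \in P(X)$ with
\[
D_q^K(\mu) = \sup_{\nu \in P(X)} D_q^K(\nu),
\]
that is, a $q$-maximising probability measure on $X$.

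Now Proposition~\ref{prop:qmax_balanced} immediately yields that this $\mu$ is balanced, completing the proof. There is no real obstacle: the existence half is a one-line application of extreme-value-on-compact, and the balanced-ness is exactly the content of the preceding proposition. The work was done in establishing continuity of $D_q^K$ (Proposition~\ref{prop:diversity_cts_mu}, where the case $q \in (0,1)$ was the delicate Lemma~\ref{lem:q_in_01}) and in the variational argument of Proposition~\ref{prop:qmax_balanced}.
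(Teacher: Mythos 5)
Your proof is correct and matches the paper's argument exactly: $D_q^K$ is continuous on the nonempty compact space $P(X)$ (Proposition~\ref{prop:diversity_cts_mu}), so it attains its maximum, and Proposition~\ref{prop:qmax_balanced} shows the maximiser is balanced. (You even cite the balanced-ness result correctly as Proposition~\ref{prop:qmax_balanced}, where the paper's own proof contains a slip, referring to the corollary itself.)
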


\begin{proof}
Fix \(q \in (0,1)\). The function \(D_q^K\) is continuous on the nonempty
compact space \(P(X)\) (Proposition~\ref{prop:diversity_cts_mu}), so it
attains a maximum at some \(\mu \in P(X)\). By
Corollary~\ref{cor:qmax_balanced}, \(\mu\) is balanced.
\end{proof}

Thus, balanced $q$-maximising measures exist for arbitrarily small $q > 0$.
Later, we will use a limiting argument to find a balanced $0$-maximising
measure.  Any such measure maximises diversity of all orders
simultaneously:

\begin{lem}\label{lem:balanced_maximising}
For \(0 \leq q' \leq q \leq \infty\), any balanced probability measure
that is $q'$-maximising is also $q$-maximising. In particular, any
balanced $0$-maximising measure is maximising.
\end{lem}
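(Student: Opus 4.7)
My plan is to combine two facts already established in the excerpt: that balanced measures have $q$-independent diversity, and that for any fixed measure, diversity is monotonically decreasing in $q$. Suppose $\mu$ is a balanced probability measure that is $q'$-maximising, with $0 \le q' \le q \le \infty$, and let $\nu$ be an arbitrary probability measure on $X$. I want to show $D_q^K(\nu) \le D_q^K(\mu)$.

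First, I would invoke Remark~\ref{rmk:typicality_constant} (equivalently, the characterisation in Lemma~\ref{lem:balanced_tfae}) to observe that because $\mu$ is balanced, $D_r^K(\mu)$ has the same value for every $r \in [-\infty, \infty]$. In particular $D_q^K(\mu) = D_{q'}^K(\mu)$. Next, by Proposition~\ref{prop:diversity_cts_q}(\ref{part:dcq-dec}), the function $r \mapsto D_r^K(\nu)$ is (non-strictly) decreasing on $[-\infty, \infty]$, so since $q \ge q'$ we have $D_q^K(\nu) \le D_{q'}^K(\nu)$. Finally, $q'$-maximality of $\mu$ gives $D_{q'}^K(\nu) \le D_{q'}^K(\mu)$. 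Chaining:
\[
D_q^K(\nu) \;\le\; D_{q'}^K(\nu) \;\le\; D_{q'}^K(\mu) \;=\; D_q^K(\mu),
\]
which is exactly the $q$-maximality of $\mu$. The second assertion of the lemma is the special case $q' = 0$, applied for all $q \in [0, \infty]$.

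There is essentially no obstacle here: the lemma is a direct bookkeeping consequence of the two monotonicity/constancy results proved earlier, and the only subtlety is to be sure that the ordering of $q$ and $q'$ lines up with the direction of the inequalities, which it does. I would make no separate argument for $q = \infty$, since Proposition~\ref{prop:diversity_cts_q} already covers the full closed range $[-\infty, \infty]$.
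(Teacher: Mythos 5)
Your proof is correct and is essentially identical to the paper's: both establish the chain $D_q^K(\nu) \leq D_{q'}^K(\nu) \leq D_{q'}^K(\mu) = D_q^K(\mu)$ using the monotonicity of diversity in its order (Proposition~\ref{prop:diversity_cts_q}(\ref{part:dcq-dec})), the $q'$-maximality of $\mu$, and the constancy of $D_r^K(\mu)$ over $r$ for a balanced measure (Lemma~\ref{lem:balanced_tfae}).
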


\begin{proof}
Let \(0 \leq q' \leq q \leq \infty\) and let \(\mu\) be a balanced
$q'$-maximising measure.  Then for all \(\nu \in P(X)\),
\[
D_q^K(\nu) \leq D_{q'}^K(\nu) \leq D_{q'}^K(\mu) = D_q^K(\mu),
\]
where the inequalities follows from
Proposition~\ref{prop:diversity_cts_q}(\ref{part:dcq-dec}) and the
maximality of \(D_{q'}^K(\mu)\), and the equality from
Lemma~\ref{lem:balanced_tfae} and \(\mu\) being balanced.
\end{proof}

For the limiting argument, we will use:

\begin{lem}
\label{lem:closed}
\begin{enumerate}[(i)]
\item 
\label{part:closed-bal}
The set of balanced probability measures is closed in $P(X)$.

\item
\label{part:closed-max}
For each $q \in (0, \infty)$, the set of $q$-maximising probability
measures is closed in $P(X)$.
\end{enumerate}
\end{lem}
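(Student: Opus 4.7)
The plan is to realise each of the two sets as the preimage of a closed set under a continuous map, leveraging the continuity of $D_q^K$ on $P(X)$ for $q \in (0, \infty)$ established in Proposition \ref{prop:diversity_cts_mu}.

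Part (ii) I would handle first, and it is essentially immediate: since $P(X)$ is compact (by Banach--Alaoglu) and $D_q^K$ is continuous, it attains its supremum $M_q = \sup_{\nu \in P(X)} D_q^K(\nu)$, and the set of $q$-maximising measures is exactly $(D_q^K)^{-1}(\{M_q\})$, closed as the preimage of a singleton under a continuous function.

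For part (i), the key observation is that by Proposition \ref{prop:diversity_cts_q}(\ref{part:dcq-dec}), a probability measure $\mu$ is balanced if and only if the function $q \mapsto D_q^K(\mu)$ fails to be strictly decreasing on $[-\infty, \infty]$; equivalently, $\mu$ is balanced iff $D_{q_1}^K(\mu) = D_{q_2}^K(\mu)$ for some (equivalently, every) pair $q_1 < q_2$ in $[-\infty, \infty]$. I would then pick any two orders in the open range $(0, \infty)$ where $D_q^K$ is continuous --- say $q_1 = 1$ and $q_2 = 2$ --- and conclude that the set of balanced probability measures equals $\{\mu \in P(X) : D_1^K(\mu) = D_2^K(\mu)\}$, closed as the preimage of the diagonal under the continuous map $\mu \mapsto (D_1^K(\mu), D_2^K(\mu))$.

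There is no serious obstacle here; both parts reduce to elementary consequences of Proposition \ref{prop:diversity_cts_mu}. The one thing to be careful about is to avoid characterisations of balancedness involving $D_0^K$ or $D_\infty^K$, since those diversities are not continuous in $\mu$ in general (as noted in the remark at the end of Section \ref{S_diversity}); restricting attention to a pair of orders in the open range $(0, \infty)$ sidesteps this issue entirely.
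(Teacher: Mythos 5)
Your proposal is correct and matches the paper's argument: part~(ii) follows at once from the continuity of $D_q^K$ on $P(X)$, and part~(i) uses exactly the same characterisation, namely that by Proposition~\ref{prop:diversity_cts_q}(\ref{part:dcq-dec}) the balanced measures are precisely $\{\mu \in P(X) \st D_1^K(\mu) = D_2^K(\mu)\}$, which is closed since $D_1^K$ and $D_2^K$ are continuous (Proposition~\ref{prop:diversity_cts_mu}). Your remark about avoiding $D_0^K$ and $D_\infty^K$ is the same caution that implicitly guides the paper's choice of orders $1$ and $2$.
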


\begin{proof}
For~(\ref{part:closed-bal}), by Lemma~\ref{lem:balanced_tfae} and
Proposition~\ref{prop:diversity_cts_q}(\ref{part:dcq-dec}), the set of
balanced measures is
\[
\{ \mu \in P(X) \st D_1^K(\mu) = D_2^K(\mu) \}.
\]
But $D_1^K, D_2^K: P(X) \to \R$ are continuous (by
Proposition~\ref{prop:diversity_cts_mu}), so by a standard topological
argument, this set is closed.

Part~(\ref{part:closed-max}) is immediate from the continuity of $D_q^K$.
\end{proof}


\section{The maximisation theorem}
\label{S_main}

We now come to our main theorem:

\begin{thm}\label{thm:main}
Let \((X, K)\) be a nonempty symmetric space with similarities.  
\begin{enumerate}[(i)]
\item
\label{part:main-meas}
There exists a probability measure \(\mu\) on $X$ that maximises
\(D_q^K(\mu)\) for all \(q \in [0, \infty]\) simultaneously.

\item
\label{part:main-div}
The maximum diversity $\sup_{\mu \in P(X)} D_q^K(\mu)$ is independent of
\(q \in [0, \infty]\).
\end{enumerate}
\end{thm}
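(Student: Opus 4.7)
The plan is to construct the desired measure $\mu^*$ as a weak-$*$ cluster point of balanced $q$-maximising measures as $q \searrow 0$, then use the preparatory results of Section~\ref{S_prep_lemmas} to upgrade it to a measure that maximises $D_q^K$ simultaneously for every $q \in [0, \infty]$.

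First, by Corollary~\ref{cor:qmax_balanced}, for each $q \in (0,1)$ there exists a balanced $q$-maximising measure $\mu_q \in P(X)$. I would view $(\mu_q)_{q \in (0,1)}$ as a net indexed by the reverse ordering on $(0,1)$, so that the net runs as $q \to 0^+$; weak-$*$ compactness of $P(X)$ then yields a convergent subnet $\mu_{q_\alpha} \to \mu^* \in P(X)$. Since the balanced measures form a closed set by Lemma~\ref{lem:closed}(\ref{part:closed-bal}), $\mu^*$ is balanced. Fix $q' \in (0, \infty)$: eventually $q_\alpha \leq q'$, and Lemma~\ref{lem:balanced_maximising} says every balanced $q_\alpha$-maximising measure is also $q'$-maximising, so $\mu_{q_\alpha}$ lies eventually in the set of $q'$-maximising measures, which is closed by Lemma~\ref{lem:closed}(\ref{part:closed-max}). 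Hence $\mu^*$ is $q'$-maximising for every $q' \in (0, \infty)$.

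The remaining step is to extend to the endpoints $q \in \{0, \infty\}$, where the closed-set argument is unavailable since Proposition~\ref{prop:diversity_cts_mu} excludes these values. For $q = \infty$, monotonicity in $q$ (Proposition~\ref{prop:diversity_cts_q}(\ref{part:dcq-dec})) already suffices: for any $\nu \in P(X)$ and any $q' \in (0, \infty)$,
\[
D_\infty^K(\nu) \leq D_{q'}^K(\nu) \leq D_{q'}^K(\mu^*) = D_\infty^K(\mu^*),
\]
where the last equality uses that $\mu^*$ is balanced (Lemma~\ref{lem:balanced_tfae}). For $q = 0$ I would use continuity of $D_q^K(\nu)$ in the parameter $q$ (Proposition~\ref{prop:diversity_cts_q}(\ref{part:dcq-cts})): starting from $D_{q'}^K(\nu) \leq D_{q'}^K(\mu^*) = D_0^K(\mu^*)$ for every $q' \in (0, \infty)$ and letting $q' \to 0^+$ yields $D_0^K(\nu) \leq D_0^K(\mu^*)$. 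Together these inequalities establish part (i); part (ii) is then immediate, since $\mu^*$ is balanced and so $D_q^K(\mu^*)$ is independent of $q$ by Lemma~\ref{lem:balanced_tfae}, while realising the supremum at every $q$.

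The subtlest point will be the $q = 0$ extension, since diversity of order $0$ fails to be continuous in $\mu$ even in trivial finite examples (cf.\ the remark following Proposition~\ref{prop:diversity_cts_mu}), so the closed-set strategy used for $q' \in (0, \infty)$ does not apply. The resolution is to avoid continuity in $\mu$ altogether at $q = 0$: continuity in the parameter $q$ (for each fixed $\nu$, and trivially for $\mu^*$ since it is balanced and hence flat in $q$) is enough to push the maximising inequality across the limit $q' \to 0^+$.
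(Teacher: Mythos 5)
Your proposal is correct and follows essentially the same route as the paper: a weak-$*$ cluster point of balanced $q$-maximising measures as $q \to 0^+$, closedness of the balanced and $q$-maximising sets to transfer those properties to the limit, continuity of $D_q^K(\nu)$ in $q$ to handle $q = 0$, and monotonicity in $q$ together with balancedness to handle $q = \infty$ (the paper packages this last step as Lemma~\ref{lem:balanced_maximising}, whose proof is exactly your monotonicity argument). The only cosmetic difference is that the paper phrases the limit as a cluster point obtained from compactness rather than extracting a convergent subnet.
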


\begin{proof}
For each \(q \in (0,1)\), there exists a balanced \(q\)-maximising
probability measure on \(X\) (Corollary~\ref{cor:qmax_balanced}).  Since
\(P(X)\) is compact, we can choose some \(\mu \in P(X)\) such that for
every $q > 0$ and neighbourhood $U$ of $\mu$, there exist $q' \in (0, q)$
and a balanced $q'$-maximising measure in $U$.  Then by
Lemma~\ref{lem:balanced_maximising}, for every $q > 0$, every neighbourhood
of $\mu$ contains a balanced $q$-maximising measure.  To prove both parts
of the theorem, it suffices to show that $\mu$ is balanced and maximising.

By Lemma~\ref{lem:closed}(\ref{part:closed-bal}), $\mu$ is balanced.  By
Lemma~\ref{lem:closed}(\ref{part:closed-max}), $\mu$ is $q$-maximising for
each $q > 0$.  Now given any $\nu \in P(X)$, we have $D_q^K(\mu) \geq
D_q^K(\nu)$ for all $q > 0$; then passing to the limit as $q \to 0+$ and
using the continuity of diversity in its order
(Proposition~\ref{prop:diversity_cts_q}(\ref{part:dcq-cts})) gives
$D_0^K(\mu) \geq D_0^K(\nu)$.  Hence $\mu$ is $0$-maximising.  But $\mu$ is
also balanced, so by Lemma~\ref{lem:balanced_maximising}, $\mu$ is
maximising.
\end{proof}

The symmetry hypothesis in the theorem cannot be dropped, even in the
finite case (\cite{LeinsterMaximizing2016}, Section~8).

Part~(\ref{part:main-div}) of the theorem shows that maximum diversity is
an unambiguous real invariant of a space, not depending on a choice of
parameter~$q$:

\begin{defn}\label{def_maxdiv}
Let \((X, K)\) be a nonempty symmetric space with similarities.  The
\demph{maximum diversity} of \((X, K)\) is
\[
\Dmax{X}{K} = \sup_{\mu \in P(X)} D_q^K(\mu) \in (0, \infty),
\]
for any \(q \in [0,\infty]\).  
Similarly, the \demph{maximum entropy} of $(X, K)$ is
\[
\Hmax{X}{K} = \log\Dmax{X}{K} = \sup_{\mu \in P(X)} H_q^K(\mu).
\]
We often abbreviate $\Dmax{X}{K}$ as $\Dmx{X}$. 
\end{defn}

The well-definedness of maximum diversity can be understood as follows.  As
established in Section~\ref{S_prep_lemmas}, for a maximising measure $\mu$,
all points in $\supp\mu$ are equally typical.  Diversity is mean
atypicality, and although the notion of mean varies with the order $q$, all
means have the property that the mean of an essentially constant function
is that constant.  Thus, our maximising measure $\mu$ has the same
diversity of all orders.  That diversity is $\Dmx{X}$.

To find a measure that maximises diversity of \emph{all} positive orders,
it suffices to find one that maximises diversity of just \emph{one}
positive order:

\begin{cor}\label{cor:one_is_enough}
Let $(X, K)$ be a symmetric space with similarities.
Suppose that \(\mu \in P(X)\) is $q$-maximising for some \(q \in
(0,\infty]\). Then \(\mu\) is maximising.
\end{cor}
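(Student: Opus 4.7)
The plan is to reduce to the balanced case via Proposition \ref{prop:diversity_cts_q}(\ref{part:dcq-dec}), using the hypothesis $q > 0$ together with the well-definedness of $\Dmx{X}$ established in Theorem \ref{thm:main}.

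\textbf{Step 1: Set up.} By Theorem \ref{thm:main}(\ref{part:main-div}), the supremum $\Dmx{X} = \sup_{\nu \in P(X)} D_{q'}^K(\nu)$ does not depend on the choice of $q' \in [0, \infty]$. By hypothesis, $D_q^K(\mu) = \Dmx{X}$.

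\textbf{Step 2: Show $\mu$ is balanced.} Apply Proposition \ref{prop:diversity_cts_q}(\ref{part:dcq-dec}): either $\mu$ is balanced, or the function $q' \mapsto D_{q'}^K(\mu)$ is strictly decreasing on $[-\infty, \infty]$. Suppose we are in the second case. Since $q > 0$, we may pick any $q' \in [0, q)$ (for example, $q' = 0$). Then
\[
D_{q'}^K(\mu) > D_q^K(\mu) = \Dmx{X} = \sup_{\nu \in P(X)} D_{q'}^K(\nu),
\]
a contradiction. Hence $\mu$ is balanced.

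\textbf{Step 3: Conclude.} Since $\mu$ is balanced, Proposition \ref{prop:diversity_cts_q}(\ref{part:dcq-dec}) (or equivalently Lemma \ref{lem:balanced_tfae}, (\ref{part:it-const})$\Leftrightarrow$(\ref{part:it-flat})) says that $q' \mapsto D_{q'}^K(\mu)$ is constant on $[-\infty, \infty]$. Thus for every $q' \in [0, \infty]$,
\[
D_{q'}^K(\mu) = D_q^K(\mu) = \Dmx{X},
\]
so $\mu$ is $q'$-maximising for every such $q'$, i.e.\ $\mu$ is maximising.

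The only subtle point is Step 2, where it is essential that $q > 0$: this is what guarantees the existence of a parameter $q' \in [0, q)$ at which strict monotonicity would produce a value exceeding the supremum $\Dmx{X}$. The case $q = 0$ is genuinely excluded (the corollary makes no claim there), consistent with the fact that $0$-maximising measures need not be balanced in general. No new analysis is required; the corollary is a clean consequence of the monotonicity dichotomy and the $q$-independence of $\Dmx{X}$ proved in Theorem \ref{thm:main}.
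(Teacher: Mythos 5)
Your proposal is correct and takes essentially the same route as the paper: both arguments combine the $q$-independence of $\Dmx{X}$ from Theorem~\ref{thm:main}(\ref{part:main-div}) with the dichotomy of Proposition~\ref{prop:diversity_cts_q}(\ref{part:dcq-dec}) to force a $q$-maximiser (for $q>0$) to be balanced, and then use constancy of $D_{q'}^K(\mu)$ in $q'$ to conclude it maximises at every order. The paper phrases the first step as a chain $D_q^K(\mu) \leq D_0^K(\mu) \leq \Dmx{X} = D_q^K(\mu)$ forcing equality and finishes via Lemma~\ref{lem:balanced_maximising}, while you argue by contradiction and conclude directly; these are only cosmetic differences.
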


\begin{proof}
Fix \(q \in (0,\infty]\) and let \(\mu\) be a \(q\)-maximising measure. Then
\[
D_q^K(\mu) \leq D_0^K(\mu) \leq \Dmx{X} = D_q^K(\mu),
\]
so equality holds throughout.  As $D_q^K(\mu) = D_0^K(\mu)$ with $q \neq
0$, Proposition~\ref{prop:diversity_cts_q}(\ref{part:dcq-dec}) implies that
$\mu$ is balanced.  But also $D_0^K(\mu) = \Dmx{X}$, so $\mu$ is
$0$-maximising.  Lemma~\ref{lem:balanced_maximising} then implies that
$\mu$ is maximising.
\end{proof}

The exclusion of the case $q = 0$ here is necessary: not every
$0$-maximising measure is maximising, even in the finite case
(\cite{LeinsterMaximizing2016}, end of Section~6)

Theorem~\ref{thm:main} asserts the mere \emph{existence} of a maximising
measure and the well-definedness of maximum diversity.  But there is a
somewhat explicit \emph{description} of the maximum diversity and
maximising measures, in terms of magnitude and weight measures:

\begin{cor}
\label{cor:comp}
Let $(X, K)$ be a nonempty symmetric space with similarities.
\begin{enumerate}[(i)]
\item 
\label{part:comp-max}
We have
\begin{equation}
\label{eq:max-mag}
\Dmx{X} = \sup_Y |Y|,
\end{equation}
where the supremum is over the nonempty closed subsets $Y$ of $X$ admitting
a positive weight measure. 

\item
\label{part:comp-meas}
A probability measure $\mu$ on $X$ is maximising if and only if it is equal
to $\wext{\nu}$ for some positive weight measure $\nu$ on some subset
$Y$ attaining the supremum in~\eqref{eq:max-mag}.  In that case,
$\Dmx{X} = |\supp\mu|$.
\end{enumerate}
\end{cor}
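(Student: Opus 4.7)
The plan is to prove (i) by establishing two inequalities and then read off (ii) as a by-product.

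For the lower bound $\Dmx{X} \geq \sup_Y |Y|$, I would take any nonempty closed $Y \subseteq X$ admitting a positive weight measure $\nu$, form $\wext{\nu} \in P(X)$, and invoke Lemma~\ref{lem:balanced_tfae} (the implication (iv)$\Rightarrow$(ii) together with the final clause): the probability measure $\wext{\nu}$ is balanced with $D_q^K(\wext{\nu}) = |Y|$ for every $q \in [-\infty, \infty]$. Hence $\Dmx{X} \geq D_q^K(\wext{\nu}) = |Y|$, and taking the supremum over $Y$ gives the desired inequality.

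For the reverse inequality $\Dmx{X} \leq \sup_Y |Y|$, I would use Theorem~\ref{thm:main} to pick a maximising measure $\mu$ on $X$. Such a $\mu$ is in particular $q$-maximising for any fixed $q \in (0, 1)$, so Proposition~\ref{prop:qmax_balanced} forces $\mu$ to be balanced. Applying Lemma~\ref{lem:balanced_tfae} in the direction (i)$\Rightarrow$(iii), one obtains a positive weight measure $\nu$ on the nonempty closed set $Y := \supp\mu$ with $\wext{\nu} = \mu$; and the final clause of that lemma gives $\Dmx{X} = D_q^K(\mu) = |\supp\mu|$. This simultaneously proves $\sup_Y |Y| \geq |\supp\mu| = \Dmx{X}$ and yields the identity $\Dmx{X} = |\supp\mu|$ that appears at the end of~(ii).

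For (ii), the ``if'' direction is immediate: if $\mu = \wext{\nu}$ for a positive weight measure $\nu$ on some $Y$ attaining the supremum in~\eqref{eq:max-mag}, then Lemma~\ref{lem:balanced_tfae} gives $D_q^K(\mu) = |Y| = \Dmx{X}$ for every $q$, so $\mu$ is maximising. The ``only if'' direction is what was already established in the previous paragraph: a maximising $\mu$ is balanced, hence equals $\wext{\nu}$ for a positive weight measure $\nu$ on $Y = \supp\mu$, and $|Y| = \Dmx{X}$ shows that this $Y$ attains the supremum.

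I do not anticipate a major obstacle here; the corollary is essentially a repackaging of Theorem~\ref{thm:main} together with Lemma~\ref{lem:balanced_tfae}. The one step that deserves care is the passage ``maximising $\Rightarrow$ balanced''. The proof of Theorem~\ref{thm:main} constructs a particular balanced maximising measure, but part~(ii) quantifies over \emph{all} maximising measures, so the balancedness must be re-derived for an arbitrary such $\mu$; this is why I apply Proposition~\ref{prop:qmax_balanced} at some $q \in (0, 1)$ rather than relying on the construction inside the proof of the main theorem.
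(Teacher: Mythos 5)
Your proposal is correct and follows essentially the same route as the paper: both directions come from the existence of a balanced maximising measure (Theorem~\ref{thm:main}) together with Lemma~\ref{lem:balanced_tfae}, and the paper likewise relies on the fact that \emph{every} maximising measure is balanced (which, as you note, follows from Proposition~\ref{prop:qmax_balanced} applied at any $q \in (0,1)$) to handle part~(ii). Your extra care on that last point is exactly the step the paper's terse proof leaves implicit, so nothing is missing.
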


\begin{proof}
For any $q
\in [0, \infty]$,
\begin{align}
\Dmx{X} &
=
\sup\{D_q^K(\mu) \st \mu \in P(X), \ \mu \text{ is balanced}\}
\label{eq:comp1}        \\
&
=
\sup\{|Y| \st \text{nonempty closed } Y \subseteq X \text{ admitting a
  positive weight measure}\}, 
\label{eq:comp2}        
\end{align}
where~\eqref{eq:comp1} follows from the existence of a balanced maximising
measure and~\eqref{eq:comp2} from Lemma~\ref{lem:balanced_tfae}.  This
proves~(\ref{part:comp-max}).  Every maximising measure is balanced,
so~(\ref{part:comp-meas}) also follows, again using
Lemma~\ref{lem:balanced_tfae}.  
\end{proof}

It follows that maximum diversity is monotone with
respect to inclusion:

\begin{cor}\label{cor:md_monotone_1}
Let \(X\) be a symmetric space with similarities, and let \(Y
\subseteq X\) be a nonempty closed subset. Then $\Dmx{Y} \leq \Dmx{X}$.
\qedhere
\end{cor}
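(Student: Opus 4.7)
The plan is to deduce the inequality directly from Corollary~\ref{cor:comp}(\ref{part:comp-max}), which characterises the maximum diversity of any symmetric space with similarities as a supremum of magnitudes of its closed subsets admitting positive weight measures.

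First I would apply Corollary~\ref{cor:comp}(\ref{part:comp-max}) to $Y$, viewed as a symmetric space with similarities under the restriction of $K$, to write
\[
\Dmx{Y} = \sup_Z |Z|,
\]
where $Z$ ranges over the nonempty closed subsets of $Y$ admitting a positive weight measure. Since $Y$ is closed in $X$, every such $Z$ is also closed in $X$, and the similarity kernel inherited by $Z$ is the same whether obtained by restriction from $Y$ or from $X$; hence the notions of weight measure on $Z$, and of the magnitude $|Z|$, do not depend on whether $Z$ is regarded as a subspace of $Y$ or of $X$. Consequently the collection of sets $Z$ appearing in the supremum for $\Dmx{Y}$ is contained in the corresponding collection for $\Dmx{X}$, and a second application of Corollary~\ref{cor:comp}(\ref{part:comp-max})---this time to $X$---gives $\Dmx{Y} \leq \Dmx{X}$.

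There is essentially no obstacle: all the substantive work has already been done in establishing Theorem~\ref{thm:main} and Corollary~\ref{cor:comp}. The only minor point worth verifying is the compatibility of the two restrictions of $K$ to $Z$, which is immediate from transitivity of restriction. As an alternative, one could bypass Corollary~\ref{cor:comp} altogether by extending any $\mu \in P(Y)$ by zero to a measure $\tilde{\mu} \in P(X)$ supported in $Y$; since $(K\tilde{\mu})|_Y = K\mu$ and $\supp\tilde{\mu} = \supp\mu$, one checks directly from Definition~\ref{def_diversity} that $D_q^K(\tilde{\mu}) = D_q^{K|_{Y\times Y}}(\mu)$ for every $q$, and then takes the supremum over $\mu$.
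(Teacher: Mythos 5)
Your proposal is correct and matches the paper's intended argument: the corollary is stated as an immediate consequence of Corollary~\ref{cor:comp}(\ref{part:comp-max}), precisely because every nonempty closed subset of $Y$ admitting a positive weight measure is also such a subset of $X$, with the same restricted kernel and hence the same magnitude. Your alternative extension-by-zero argument is also sound, but the main route is essentially the paper's own.
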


Maximum diversity is also monotone in another sense: reducing the
similarity between points increases the maximum diversity.  For metric
spaces, this means that as distances increase, so does maximum
diversity. 

\begin{prop}\label{prop:md_monotone_2}
Let $X$ be a nonempty compact Hausdorff space. Let $K, K'$ be symmetric
similarity kernels on $X$ such that $K(x, y) \geq K'(x, y)$ for all $x, y
\in X$.  Then
$
\Dmax{X}{K} \leq \Dmax{X}{K'}.
$
\end{prop}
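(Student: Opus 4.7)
The plan is to establish the inequality pointwise at the level of individual probability measures, then take suprema. Fix any $\mu \in P(X)$ and any $q \in [0, \infty]$. Since $K(x,y) \geq K'(x,y)$ for all $x, y \in X$, integrating against $\mu$ gives $(K\mu)(x) \geq (K'\mu)(x)$ for every $x \in X$. By Proposition~\ref{prop:Kmu_props}(\ref{part:Kp-supp}) and~(\ref{part:Kp-eb}) applied to both kernels, both $K\mu$ and $K'\mu$ are continuous and strictly positive on $\supp\mu$, and both $1/K\mu$ and $1/K'\mu$ are essentially bounded with respect to $\mu$. Hence
\[
0 < 1/K\mu \leq 1/K'\mu \quad \mu\text{-almost everywhere},
\]
and both functions satisfy the hypotheses required to form the power means in Definition~\ref{def_powermean}.

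Next I would invoke the elementary monotonicity of the power mean in its function argument: if $(X, \mu)$ is a probability space and $f, g : X \to (0, \infty)$ are measurable functions with $f, 1/f, g, 1/g$ essentially bounded and $f \leq g$ $\mu$-a.e., then $M_t(\mu, f) \leq M_t(\mu, g)$ for every $t \in [-\infty, \infty]$. This is immediate from the defining formulas: for $t > 0$ use monotonicity of $s \mapsto s^t$; for $t < 0$ note that $s \mapsto s^t$ is decreasing while $s \mapsto s^{1/t}$ is also decreasing; at $t = 0$ use monotonicity of $\log$ and $\exp$; and at $t = \pm\infty$ use monotonicity of $\esssup$ and $\essinf$. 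Applying this with $t = 1 - q$, $f = 1/K\mu$, and $g = 1/K'\mu$ yields $D_q^K(\mu) = M_{1-q}(\mu, 1/K\mu) \leq M_{1-q}(\mu, 1/K'\mu) = D_q^{K'}(\mu)$.

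Finally, taking the supremum over $\mu \in P(X)$ on both sides and appealing to Theorem~\ref{thm:main}(\ref{part:main-div}), which guarantees that the suprema are independent of $q$ and equal to $\Dmax{X}{K}$ and $\Dmax{X}{K'}$ respectively, gives $\Dmax{X}{K} \leq \Dmax{X}{K'}$.

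There is essentially no hard step here: the argument is pure monotonicity propagation. The only subtlety is the need to handle all orders $t$ simultaneously in the power-mean monotonicity, which requires a short case-split on the sign of $t$ and on the endpoint cases $t = 0, \pm\infty$; and the need to verify that $1/K'\mu$ is $\mu$-essentially bounded so that $D_q^{K'}(\mu)$ is defined, which Proposition~\ref{prop:Kmu_props} supplies.
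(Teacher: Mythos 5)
Your proposal is correct and follows essentially the same route as the paper: compare $K\mu \geq K'\mu$ pointwise, deduce $D_q^K(\mu) \leq D_q^{K'}(\mu)$ for every $\mu$ and $q$, and maximise over $\mu$. The only difference is that you spell out the monotonicity of power means in the function argument (with the case-split on the sign of the order), which the paper compresses into the phrase ``by definition of diversity''.
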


\begin{proof}
Fix $q \in [0, \infty]$.  We have $K\mu \geq K'\mu$ pointwise, so by 
definition of diversity, $D_q^K(\mu) \leq D_q^{K'}(\mu)$ for
all $\mu \in P(X)$.  Maximizing over $\mu$ gives the result.
\end{proof}

Maximising measures need not have full support.  Ecologically, that may
seem counterintuitive: can maximising diversity really entail eliminating
some species?  This phenonemon is discussed fully in Section~11
of~\cite{LeinsterMaximizing2016}, but in short: if a species is so ordinary
that all of its features are displayed more vividly by some other species,
then maximising diversity may indeed mean omitting it in favour of species
that are more distinctive.  With this in mind, it is to be expected that
any species absent from a maximising distribution is \hard{(i)}~at least as
ordinary or typical as those present, and \hard{(ii)}~reasonably similar to
at least one species present.  Since the typicality function of a
maximising measure $\mu$ takes constant value $1/\Dmx{X}$ on $\supp\mu$ (by
Proposition~\ref{prop:qmax_balanced}), this is the content of the following
lemma.

\begin{lem}
\label{lem:supertypical}
Let $\mu$ be a maximising measure on a nonempty symmetric space with
similarities $(X, K)$, and let $x \in X$.  Then:
\begin{enumerate}[(i)]
\item
\label{part:st-st}
$(K\mu)(x) \geq 1/\Dmx{X}$;

\item
\label{part:st-close}
there exists $y \in \supp\mu$ such that $K(x, y) \geq 1/\Dmx{X}$. 
\end{enumerate}
\end{lem}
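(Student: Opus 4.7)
The plan is to reduce everything to the observation---already noted in the paragraph preceding the lemma---that any maximising measure $\mu$ is balanced with $K\mu \equiv 1/\Dmx{X}$ on $\supp\mu$, via Proposition~\ref{prop:qmax_balanced} and the constant-value formula in Remark~\ref{rmk:typicality_constant}. In particular $\int_X K\mu \,\d\mu = 1/\Dmx{X}$, which is exactly $1/D_2^K(\mu)$.

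For part~(\ref{part:st-st}), fix $x \in X$ and consider the convex perturbation
\[
\mu_t = (1-t)\mu + t\delta_x, \qquad t \in [0,1].
\]
Each $\mu_t$ is a probability measure, and $K\mu_t = (1-t)K\mu + tK(x,-)$ is strictly positive on $\supp\mu_t$ for every $t \in (0,1)$, so $D_2^K(\mu_t)$ is well-defined. Expanding the double integral and using symmetry of $K$ to identify both cross terms as $(K\mu)(x)$ yields the quadratic identity
\[
\frac{1}{D_2^K(\mu_t)}
=
\int_X K\mu_t \,\d\mu_t
=
\frac{(1-t)^2}{\Dmx{X}} + 2t(1-t)(K\mu)(x) + t^2 K(x,x).
\]
Since $\mu$ is $2$-maximising, the left-hand side is at least $1/\Dmx{X}$ for every $t \in [0,1]$, with equality at $t = 0$, so the right derivative at $0$ must be nonnegative. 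Computing gives $-2/\Dmx{X} + 2(K\mu)(x) \geq 0$, which is exactly~(\ref{part:st-st}).

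Part~(\ref{part:st-close}) will follow by a short contradiction argument. Suppose $K(x,y) < 1/\Dmx{X}$ for every $y \in \supp\mu$. Since $K(x,-)$ is continuous and $\supp\mu$ is compact, the supremum is attained, so $c := \max_{y \in \supp\mu} K(x,y) < 1/\Dmx{X}$. Integrating against the probability measure $\mu$ then gives $(K\mu)(x) \leq c < 1/\Dmx{X}$, contradicting part~(\ref{part:st-st}).

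I expect no serious obstacle. The only point needing a moment's care is checking that $D_2^K(\mu_t)$ is well-defined even when $x \notin \supp\mu$, which is immediate because $K(x,x) > 0$ ensures $K\mu_t$ is positive at $x$. Because $\delta_x$ is itself an admissible probability measure, no smooth approximation of a point mass (of the sort required in Proposition~\ref{prop:qmax_balanced}) is needed, and the derivative test is entirely elementary.
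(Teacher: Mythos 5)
Your proposal is correct and is essentially the paper's own argument: the paper likewise perturbs along $(1-s)\mu + s\delta_x$, uses $D_2^K(\nu) = 1/\int_X\int_X K\,\d\nu\,\d\nu$ together with symmetry of $K$ to get the same quadratic in $s$, and extracts $(K\mu)(x) \geq 1/\Dmx{X}$ from nonnegativity of the perturbation term near $s=0$ (phrased via factoring out $s$ rather than a right derivative, which is the same computation). Part~(ii) is also identical in substance: $(K\mu)(x) \leq \sup_{y \in \supp\mu} K(x,y)$ with the supremum attained by compactness, which you merely recast as a contradiction.
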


The proof will use the symmetric bilinear form $\ip{-}{-}_X$ on $M(X)$
given by
\begin{equation}
\label{eq:form}
\ip{\nu}{\pi}_X
=
\int_X \int_X K(x, y) \d\nu(x) \d\pi(y),
\end{equation}
and the observation that $D_2^K(\nu) = 1/\ip{\nu}{\nu}_X$.  

\begin{proof}
To prove~(\ref{part:st-st}), for $s \in [0, 1]$, put
\[
\nu_s = (1 - s)\mu + s \delta_x \in P(X).
\]
Then for all $s \in [0, 1]$,
\begin{align*}
1/D_2^K(\nu_S) &
=
\Ip{(1 - s)\mu + s\delta_x}{(1 - s)\mu + s\delta_x}_X \\
&
=
(1 - s)^2 / \Dmx{X} + 2s(1 - s)\cdot (K\mu)(x) + s^2 K(x, x).
\end{align*}
Rearranging gives
\[
\frac{1}{D_2^K(\nu_S)} - \frac{1}{\Dmx{X}}      
=
\biggl\{
\biggl( \frac{1}{\Dmx{X}} - 2(K\mu)(x) + K(x, x) \biggr) s
+
2 \biggl( (K\mu)(x) - \frac{1}{\Dmx{X}} \biggr) 
\biggr\} s.
\]
But the left-hand side is nonnegative for all $s \in (0, 1]$, so the affine
function $\{\cdots\}$ of $s$ is nonnegative on $(0, 1]$, hence $(K\mu)(x)
- 1/\Dmx{X} \geq 0$.

To prove~(\ref{part:st-close}), it follows from~(\ref{part:st-st}) that 
\[
\frac{1}{\Dmx{X}}
\leq 
(K\mu)(x) 
=
\int_{\supp\mu} K(x, y) \d\mu(y)
\leq
\sup_{y \in \supp\mu} K(x, y),
\]
and since $\supp\mu$ is compact, the supremum is attained.
\end{proof}


\section{Metric spaces}
\label{S_metric}

For the rest of this paper, we specialise to compact metric spaces $X = (X,
d)$, using the similarity kernel $K(x, y) = e^{-d(x, y)}$ and writing
$D_q^K$ as $D_q^X$.

We have seen that maximum diversity is closely related to magnitude
(Corollary~\ref{cor:comp}).  Here, we review some of the geometric
properties of magnitude (surveyed in~\cite{Leinstermagnitude2017}) and
state their consequences for maximum diversity.  We then compute maximum
diversity for several classes of metric space.

Most of the theory of the magnitude of metric spaces assumes that the space
is \demph{positive definite}, meaning that for every finite sequence $x_1,
\ldots, x_n$ of distinct points, the matrix $(e^{-d(x_i, x_j)})$ is
positive definite.  Many familiar metric spaces are positive definite,
including all subsets of $\R^n$ with the Euclidean or $\ell^1$ (taxicab)
metric, all subsets of hyperbolic space, and all ultrametric spaces
(\cite{MeckesPositive2013}, Theorem~3.6).

There are several equivalent definitions of the magnitude of a positive
definite compact metric space $X$, as shown by
Meckes~\cite{MeckesMagnitude2015,Leinstermagnitude2017}.  The simplest is
this:
\[
|X| = \sup \{ |Y| \st \text{finite } Y \subseteq X \}.
\]
When $X$ admits a weight measure (and in particular, when $X$ is finite),
this is equivalent to Definition~\ref{def_magnitude}.  Indeed, Meckes
proved (\cite{MeckesPositive2013}, Theorems~2.3 and~2.4): 

\begin{thm}[Meckes]
\label{thm:pdms_variational}
Let $X$ be a positive definite compact metric space.  Then
\[
|X| = 
\sup_\mu 
\frac{\mu(X)^2}{\int_X \int_X e^{-d(x, y)} \d\mu(x) \d\mu(y)},
\]
where the supremum is over all $\mu \in M(X)$ such that the denominator is
nonzero.  The supremum is attained by $\mu$ if and only if $\mu$ is a
scalar multiple of a weight measure, and if $\mu$ is a weight measure then
$|X| = \mu(X)$.  
\qedhere
\end{thm}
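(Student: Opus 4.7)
The plan is to exploit the positive semidefinite symmetric bilinear form $\ip{\cdot}{\cdot}_X$ on $M(X)$ defined in~\eqref{eq:form}. The key input beyond positive definiteness of $X$ is the existence of a weight measure $w$ on $X$; once that is granted, the variational formula drops out of a single application of Cauchy--Schwarz.

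First I would verify that $\ip{\cdot}{\cdot}_X$ is positive semidefinite on all of $M(X)$. For signed measures supported on a finite set this is immediate from Meckes's definition of positive definiteness. For an arbitrary $\mu \in M(X)$, approximate $\mu$ weakly by finitely-supported signed measures and use the continuity of $K$ on the compact space $X \times X$ to pass to the limit in the double integral. Granting existence of a weight measure $w$, the identity $Kw \equiv 1$ gives
\[
\ip{w}{\mu}_X = \int_X Kw \d\mu = \mu(X)
\]
for every $\mu \in M(X)$, and in particular $\ip{w}{w}_X = w(X) = |X|$. Cauchy--Schwarz then yields $\mu(X)^2 = \ip{w}{\mu}_X^2 \leq \ip{w}{w}_X \cdot \ip{\mu}{\mu}_X = |X| \cdot \ip{\mu}{\mu}_X$, proving the upper bound in the variational formula, and taking $\mu = w$ shows it is attained. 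Using that the form is actually positive definite on $M(X)$ (by continuity of $K$ together with density of finitely-supported measures), the equality case of Cauchy--Schwarz forces any other maximiser to be a scalar multiple of $w$; Lemma~\ref{lem:wtg-ind} then ensures consistency of $|X|$ across choices of weight measure, and the last clause $|X| = \mu(X)$ for weight $\mu$ is just Definition~\ref{def_magnitude}.

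The main obstacle is \emph{producing} a weight measure on an infinite positive definite compact metric space. My strategy would be to complete the pre-Hilbert space $(M(X), \ip{\cdot}{\cdot}_X)$ to a Hilbert space $\mathcal{H}$ and to identify $w$ as the Riesz representative of the linear functional $\mu \mapsto \mu(X)$. Boundedness of this functional is supplied by applying the already-established Cauchy--Schwarz bound to finitely-supported signed measures, using that $\sup_{Y \text{ finite}} |Y|$ is finite for positive definite compact $X$ (another theorem of Meckes). The delicate point is then to argue that the abstract element of $\mathcal{H}$ so produced is represented by a genuine signed Radon measure on $X$; that requires extending $\mu \mapsto K\mu$ continuously to the completion and using compactness of $X$ together with continuity of $K$ to reconstruct a classical measure from the abstract Hilbert space data. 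This is where the analytic core of the proof in~\cite{MeckesPositive2013} does its real work.
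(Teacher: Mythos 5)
There is a genuine gap, and it is structural rather than technical. Your whole argument is conditioned on the existence of a weight measure $w$ on $X$, which you then try to manufacture by completing $(M(X), \ip{-}{-}_X)$ and taking the Riesz representative of $\mu \mapsto \mu(X)$. But positive definite compact metric spaces frequently admit \emph{no} weight measure at all -- the paper points this out immediately after the theorem, citing~\cite{Barcelomagnitudes2018} for Euclidean balls -- and in exactly those cases the supremum in Theorem~\ref{thm:pdms_variational} is not attained. The ``delicate point'' you defer (showing the abstract Riesz representative is a genuine signed Radon measure) is not a hard step awaiting more care: it is false in general. The representative is Meckes's abstract ``weighting'' in the completion, and for a Euclidean ball, say, it corresponds to a distribution involving derivatives of boundary measure, not a measure. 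So your plan proves the variational formula only in the special case where a weight measure happens to exist, and gives no argument for either inequality in the unattained case. The proof in~\cite{MeckesPositive2013} avoids this entirely: the lower bound $|X| \leq \sup_\mu$ comes from finite subsets $Y$, where the positive definite matrix $(e^{-d(x_i,x_j)})$ is invertible and a weight vector always exists with $|Y| = \mu(Y)^2/\ip{\mu}{\mu}_Y$; the upper bound comes from approximating an arbitrary $\mu \in M(X)$ weak* by finitely supported signed measures of the same total mass, using compactness of $X$ and continuity of $K$ to pass to the limit in the quadratic form. No weight measure on $X$ is ever needed.

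A secondary problem: you assert that $\ip{-}{-}_X$ is positive \emph{definite} on $M(X)$ ``by continuity of $K$ together with density of finitely-supported measures''. Density arguments only transfer the non-strict inequality, so this gives positive semidefiniteness and nothing more; strict definiteness of the form is a genuinely stronger property (in this paper it is proved for subsets of $\R^n$ by a Fourier-analytic argument in Proposition~\ref{prop:euc-unique}, not by density). Fortunately you do not need it for the attainment clause: if the supremum is attained at $\mu$, a first-variation argument (or the semidefinite Cauchy--Schwarz equality case, noting that a null vector $\nu$ of the form satisfies $\ip{\nu}{\delta_x}_X = (K\nu)(x) = 0$ for every $x$) shows directly that $K\mu$ is constant, hence $\mu$ is a scalar multiple of a weight measure. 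Your Cauchy--Schwarz computation for the ``if'' direction, and the appeal to Lemma~\ref{lem:wtg-ind} and Definition~\ref{def_magnitude} for the final clause, are fine once a weight measure is assumed; the missing content is the case where none exists.
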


Note that the supremum is over \emph{signed} measures, unlike the similar
expression for maximum diversity in Example~\ref{eg:div2}.  Work such
as~\cite{Barcelomagnitudes2018} has established that even for some of the
most straightforward spaces (including Euclidean balls), no weight measure
exists.  In that case, the supremum is not attained.

An important property of positive definite spaces, immediate from the
definition, is that if $Y \subseteq X$ then $|Y| \leq |X|$.  Hence by
Corollary~\ref{cor:comp}(\ref{part:comp-max}),
\begin{equation}
\label{eq:max-leq-mag}
\Dmx{X} \leq |X|
\end{equation}
for all positive definite compact metric spaces $X \neq \emptyset$.  Any
one-point subset of $X$ has a positive weight measure and magnitude~$1$,
so again by Corollary~\ref{cor:comp}(\ref{part:comp-max}),
\[
\Dmx{X} \geq 1.
\]
If $X$ does not admit a weight measure then it follows from
Corollary~\ref{cor:comp}(\ref{part:comp-meas}) that no maximising measure
on $X$ has full support.  Indeed, the apparent rarity of spaces admitting a
weight measure suggests that the supremum in Corollary~\ref{cor:comp} runs
over a rather small class of subsets $Y$.

There are a few spaces of geometric interest whose magnitude is known
exactly, including spheres with the geodesic metric (Theorem~7
of~\cite{Willertonmagnitude2014}), Euclidean balls of odd dimension (whose
magnitude is a rational function of the radius \cite{Barcelomagnitudes2018,
  WillMOBH, Willertonmagnitude2018}), and convex bodies in
$\R^n$ with the $\ell^1$ metric (Theorem~5.4.6
of~\cite{Leinstermagnitude2017}; the magnitude is closely related to the
intrinsic volumes).  But for many very simple spaces, including even the
$2$-dimensional Euclidean disc, the magnitude remains unknown.

In the rest of this section, we analyse the few classes of metric space for
which we are able to calculate the maximum diversity exactly.  In principle
this includes all finite spaces, since Corollary~\ref{cor:comp} then
provides an algorithm for calculating the maximum diversity (described in
Section~7 of~\cite{LeinsterMaximizing2016}).  This class aside, all our
examples are instances of the following result.

\begin{lem}
\label{lem:pos-pos-dmax}
Let $X$ be a nonempty positive definite compact metric space admitting a
positive weight measure $\mu$.  Then:
\begin{enumerate}[(i)]
\item
\label{part:ppd-meas}
the normalisation $\wext{\mu}$ of $\mu$ is the unique maximising
measure on \(X\); 

\item
\label{part:ppd-dmax}
$\Dmx{X} = |X|$.
\end{enumerate}
\end{lem}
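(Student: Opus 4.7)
First I would apply Lemma~\ref{lem:balanced_tfae} with $Y = X$ and the given positive weight measure $\mu$: the normalisation $\wext{\mu}$ is then a balanced probability measure on $X$, and the final clause of that lemma yields $D_q^K(\wext{\mu}) = |X|$ for every $q \in [-\infty, \infty]$. Combining this with the upper bound $\Dmx{X} \leq |X|$ from~\eqref{eq:max-leq-mag} forces $\Dmx{X} = |X|$, which is~(ii), and simultaneously exhibits $\wext{\mu}$ as a maximising measure, giving existence in~(i).

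For uniqueness in~(i), I would argue via the symmetric bilinear form $\ip{\cdot}{\cdot}_X$ of~\eqref{eq:form}, together with the identity $D_2^K(\nu) = 1/\ip{\nu}{\nu}_X$ for $\nu \in P(X)$. The plan rests on the fact that, on a positive definite compact metric space, $\ip{\cdot}{\cdot}_X$ is strictly positive definite on the full space $M(X)$ of finite signed measures. Granted that, $\nu \mapsto \ip{\nu}{\nu}_X$ is strictly convex on the convex set $P(X)$, so its minimum on $P(X)$ is attained at a unique $\nu_0$, which is then the unique $2$-maximiser of $D_2^K$; by Corollary~\ref{cor:one_is_enough} this $\nu_0$ is also the unique maximising measure on $X$, and it must therefore coincide with the $\wext{\mu}$ produced above.

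The hard part will be justifying strict positive definiteness of $\ip{\cdot}{\cdot}_X$ on all of $M(X)$: the paper's definition of positive definiteness concerns only matrices built from finitely many distinct points, and lifting this to arbitrary continuous signed measures needs an extra input, typically from Meckes's theory of positive definite metric spaces. If invoking that lifting wholesale feels heavy-handed, an alternative is to use Theorem~\ref{thm:pdms_variational} to deduce that any $2$-maximising $\nu \in P(X)$ is a scalar multiple of some weight measure on $X$, necessarily of the form $\mu'/|X|$ with $\mu'$ a positive weight measure (using Lemma~\ref{lem:wtg-ind} to fix the scaling). Uniqueness then reduces to showing positive weight measures on $X$ are unique, which follows by applying strict positive definiteness only to the signed measure $\mu - \mu'$: the relation $K(\mu - \mu') \equiv 0$ gives $\ip{\mu - \mu'}{\mu - \mu'}_X = 0$, hence $\mu = \mu'$ and $\nu = \wext{\mu}$.
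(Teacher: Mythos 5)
Your argument for part~(\ref{part:ppd-dmax}) and for the \emph{existence} of a maximising measure is correct and is essentially the paper's: the paper routes through Corollary~\ref{cor:comp}, which is itself a repackaging of Lemma~\ref{lem:balanced_tfae}, and combines it with the inequality~\eqref{eq:max-leq-mag} exactly as you do.

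The gap is in uniqueness, and it sits exactly where you flagged it. Positive definiteness of a metric space, as defined in Section~\ref{S_metric}, constrains only the matrices $(e^{-d(x_i,x_j)})$ built from finitely many points, hence only finitely supported signed measures; for a general $\sigma \in M(X)$ it yields only $\ip{\sigma}{\sigma}_X \geq 0$ (Lemma~2.2 of~\cite{MeckesPositive2013}, as quoted in the proof of Proposition~\ref{prop:euc-unique}). The implication $\ip{\sigma}{\sigma}_X = 0 \Rightarrow \sigma = 0$, which both your Route~A (via Lemma~\ref{lem:ip-unique}) and your fallback step applied to $\mu - \mu'$ require, is strictly stronger: the paper treats it as an extra \emph{hypothesis} in Lemma~\ref{lem:ip-unique} and verifies it only for compact subsets of Euclidean space, by the Fourier-analytic argument of Proposition~\ref{prop:euc-unique}. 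Nothing in the paper (nor in the finite-matrix definition) supplies it for an arbitrary positive definite compact space, and a merely semidefinite form admits nonzero null measures, so no Cauchy--Schwarz equality-case trick recovers it. So your reduction of uniqueness to ``positive weight measures are unique'' is fine, but the justification you give for that uniqueness is exactly the unproven ingredient.

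The paper's own uniqueness argument avoids this entirely, by leaning harder on Theorem~\ref{thm:pdms_variational}: a maximising $\nu$ is in particular $2$-maximising, so $\nu(X)^2/\ip{\nu}{\nu}_X = D_2^X(\nu) = \Dmx{X} = |X|$, i.e.\ $\nu$ attains the supremum in that theorem; the attainment clause (as the paper uses it) then identifies $\nu$ as a scalar multiple of the weight measure $\mu$, and positivity together with the total-mass statement (Lemma~\ref{lem:wtg-ind}) fixes the scalar, giving $\nu = \wext{\mu}$. In other words, the external input you need is the equality case of Meckes's variational theorem, which the paper already quotes, rather than strict positive definiteness of $\ip{-}{-}_X$ on all of $M(X)$; replacing your last step by that citation closes the gap and recovers the paper's proof.
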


\begin{proof}
Since $X$ admits a positive weight measure,
Corollary~\ref{cor:comp}(\ref{part:comp-max}) gives $\Dmx{X} \geq |X|$.
But the opposite inequality~\eqref{eq:max-leq-mag} also holds, so $\Dmx{X}
= |X|$.  Hence by Corollary~\ref{cor:comp}(\ref{part:comp-meas}),
$\wext{\mu}$ is a maximising measure.  For uniqueness, let $\nu$ be any
maximising measure on $X$.  Then
\[
\frac{\nu(X)}{\int_X \int_X e^{-d(x, y)} \d\nu(x) \d\nu(y)}
=
D_2^X(\nu)
=
\Dmx{X}
=
|X|,
\]
so Theorem~\ref{thm:pdms_variational} implies that $\nu$ is a scalar multiple
of $\wext{\mu}$.  But both are probability measures, so $\nu =
\wext{\mu}$.  
\end{proof}

\begin{example}
\label{eg:scattered}
Let $X$ be a finite metric space with $n$ points, satisfying $d(x, y) >
\log(n - 1)$ whenever $x \neq y$.  Then $X$ is positive definite and its
unique weight measure is positive (Proposition~2.4.17
of~\cite{Leinstermagnitude2013}), so $\Dmx{X} = |X|$.
\end{example}

\begin{example}
\label{eg:ms-interval}
A line segment $[0,
  \ell] \subseteq \R$ has weight measure
\[
\tfrac{1}{2} (\delta_0 + \delta_\ell + \lambda_{[0, \ell]}),
\]
where $\delta_x$ denotes the Dirac measure at a point $x$ and $\lambda_{[0,
    \ell]}$ is Lebesgue measure on $[0, \ell]$
(\cite{Willertonmagnitude2014}, Theorem~2).  Hence
\[
|[0, \ell]| = 1 + \tfrac{1}{2}\ell.
\]
By Lemma~\ref{lem:pos-pos-dmax}, the maximum diversity of $[0, \ell]$ is
equal to its magnitude, and its unique maximising measure is
\[
\frac{\delta_0 + \delta_\ell + \lambda_{[0, \ell]}}{2 + \ell}.
\]
In fact, every compact subset of $\R$ has a positive weight measure (by
Lemma~2.8 and Corollary~2.10 of~\cite{MeckesPositive2013}), so again,
Lemma~\ref{lem:pos-pos-dmax} applies.
\end{example}

\begin{example}
\label{eg:ms-hgs}
Let $X$ be a nonempty compact metric space that is \demph{homogeneous}
(its isometry group acts transitively on points).  There is a unique
isometry-invariant probability measure on $X$, the Haar probability
  measure $\mu$ (Theorems~4.11 and~5.3 of~\cite{SteinlageHaar1975}).  As
observed in~\cite{Willertonmagnitude2014} (Theorem~1), the measure
\[
\frac{\mu}{\int_X e^{-d(x, y)} \d\mu(x)}
\]
is independent of $y \in X$ and is a positive weight measure on $X$.  Hence
\[
|X| = \frac{1}{\int_X e^{-d(x, y)} \d\mu(x)}
\]
for all $y \in X$.  This is the reciprocal of the expected similarity
between a random pair of points.  If $X$ is positive definite,
Lemma~\ref{lem:pos-pos-dmax} implies that $\Dmx{X} = |X|$ and the Haar
probability measure is the unique maximising measure.
\end{example}

We have shown that every symmetric space with similarities has at least one
maximising measure.  Although some spaces have multiple maximising measures
(\cite{LeinsterMaximizing2016}, Section~9), we now show that for many
metric spaces, the maximising measure is unique.

\begin{lem}
\label{lem:ip-unique}
Let $X$ be a nonempty compact metric space such that the bilinear form
$\ip{-}{-}_X$ on $M(X)$ (defined in~\eqref{eq:form}) is positive
definite. Then $X$ admits exactly one maximising measure.
\end{lem}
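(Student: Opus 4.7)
The plan is to exploit the quadratic nature of $\ip{\nu}{\nu}_X$ and the identity $D_2^K(\nu) = 1/\ip{\nu}{\nu}_X$. By Corollary~\ref{cor:one_is_enough}, every $2$-maximising probability measure is maximising, and conversely, every maximising measure is $2$-maximising by definition. Hence it suffices to show that the minimiser of the function $F: P(X) \to \R$ defined by $F(\nu) = \ip{\nu}{\nu}_X$ is unique (existence is already guaranteed by Theorem~\ref{thm:main}).

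The core step is to establish that $F$ is strictly convex on $P(X)$ (which is a convex subset of $M(X)$). For $\nu_1, \nu_2 \in P(X)$ and $t \in (0,1)$, a direct expansion using bilinearity gives
\[
(1 - t)F(\nu_1) + t F(\nu_2) - F\bigl((1 - t)\nu_1 + t\nu_2\bigr)
=
t(1 - t)\, \ip{\nu_1 - \nu_2}{\nu_1 - \nu_2}_X.
\]
By the positive-definiteness hypothesis, this quantity is strictly positive whenever $\nu_1 \neq \nu_2$, so $F$ is strictly convex on the convex set $P(X)$.

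A strictly convex function on a convex set attains its minimum at most once: if $\nu_1 \neq \nu_2$ were both minimisers, then $\tfrac{1}{2}(\nu_1 + \nu_2)$ would achieve a strictly smaller value, a contradiction. Combined with the existence supplied by Theorem~\ref{thm:main} (via the equivalence of $2$-maximising and maximising measures), this yields exactly one maximising measure on $X$.

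There is no serious obstacle here: the argument is essentially a textbook strict-convexity-implies-uniqueness move, and the only content specific to the setting is the identification $D_2^K(\nu) = 1/\ip{\nu}{\nu}_X$ together with Corollary~\ref{cor:one_is_enough}, which collapses the $q$-dependence so that one only has to analyse $D_2^K$. The mild subtlety worth flagging is that positive-definiteness of $\ip{-}{-}_X$ is imposed on the full space $M(X)$ of signed measures, which is exactly what is needed: the difference $\nu_1 - \nu_2$ of two probability measures is a signed measure of total mass zero, not itself in $P(X)$, and so positive-definiteness on all of $M(X)$ (not merely on $P(X)$) is what licences the strict inequality above.
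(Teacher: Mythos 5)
Your proof is correct and takes essentially the same route as the paper: strict convexity of $\nu \mapsto \ip{\nu}{\nu}_X$ (which the paper asserts directly from the inner-product structure, while you verify it via the bilinear expansion), the identity $D_2^K(\nu) = 1/\ip{\nu}{\nu}_X$, and Corollary~\ref{cor:one_is_enough} to pass between $2$-maximising and maximising, with existence from Theorem~\ref{thm:main}. Your remark that positive definiteness is needed on all of $M(X)$ because $\nu_1 - \nu_2$ is a signed measure is exactly the right point, and is implicit in the paper's argument.
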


\begin{proof}
Since $\ip{-}{-}_X$ is an inner product, the function $\mu \mapsto
\ip{\mu}{\mu}_X$ on $M(X)$ is strictly convex.  Its restriction to the
convex set $P(X)$ therefore attains a minimum at most once.  But
$D_2^X(\mu) = 1/\ip{\mu}{\mu}_X$, so $\mu$ minimises $\ip{-}{-}_X$ on
$P(X)$ if and only if $\mu$ is 2-maximising, or equivalently maximising (by
Corollary~\ref{cor:one_is_enough}). The result follows.
\end{proof}

The next proposition follows immediately from Lemma~\ref{lem:ip-unique};
the subsequent more substantial result is due to Mark Meckes (personal
communication, 2019). 

\begin{prop}
Every nonempty positive definite finite metric space has exactly one
maximising measure.  
\end{prop}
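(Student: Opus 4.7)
The plan is to apply Lemma \ref{lem:ip-unique} directly. It suffices to verify that, for a finite positive definite metric space $X = \{x_1, \ldots, x_n\}$, the symmetric bilinear form $\ip{-}{-}_X$ on $M(X)$ defined in~\eqref{eq:form} is positive definite.

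Since $X$ carries the discrete topology (being finite), a signed Radon measure on $X$ is the same data as a vector in $\R^n$: the map $\Phi: M(X) \to \R^n$ sending $\mu$ to $(\mu(\{x_1\}), \ldots, \mu(\{x_n\}))$ is an $\R$-linear isomorphism. Under $\Phi$, the integrals defining $\ip{-}{-}_X$ collapse to finite sums, so for all $\mu, \nu \in M(X)$,
\[
\ip{\mu}{\nu}_X
=
\sum_{i, j = 1}^n e^{-d(x_i, x_j)}\, \mu(\{x_i\})\, \nu(\{x_j\})
=
\Phi(\mu)^T Z\, \Phi(\nu),
\]
where $Z$ is the $n \times n$ similarity matrix with entries $Z_{ij} = e^{-d(x_i, x_j)}$.

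By the definition of positive definiteness for finite metric spaces recalled at the start of Section~\ref{S_metric}, the matrix $Z$ is positive definite in the usual sense, so $\Phi(\mu)^T Z\, \Phi(\mu) > 0$ for every nonzero $\mu \in M(X)$. Thus $\ip{-}{-}_X$ is a positive definite form on $M(X)$, and Lemma~\ref{lem:ip-unique} yields the uniqueness of the maximising measure. (Existence is already guaranteed by Theorem~\ref{thm:main}.) There is no substantive obstacle here beyond unpacking the definitions; the real content was already invested in the preceding lemma.
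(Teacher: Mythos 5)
Your proof is correct and matches the paper's argument: the paper states that this proposition "follows immediately from Lemma~\ref{lem:ip-unique}", and the content you spell out (identifying $M(X)$ with $\R^n$ and observing that the form is given by the positive definite matrix $(e^{-d(x_i,x_j)})$) is precisely the intended verification. Nothing further is needed.
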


\begin{prop}[Meckes]
\label{prop:euc-unique}
Every nonempty compact subset of Euclidean space has exactly one maximising
measure. 
\end{prop}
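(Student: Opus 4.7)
The plan is to apply Lemma~\ref{lem:ip-unique}: it suffices to show that for any nonempty compact $X \subseteq \R^n$, the symmetric bilinear form $\ip{-}{-}_X$ on $M(X)$ is positive definite, i.e.\
\[
\ip{\nu}{\nu}_X = \int_X \int_X e^{-\|x - y\|} \d\nu(x) \d\nu(y) > 0
\]
for every nonzero signed measure $\nu \in M(X)$.

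The key tool is the Fourier transform on $\R^n$. The Euclidean similarity kernel $\phi(z) = e^{-\|z\|}$ is a standard subordinated Bessel-type function whose Fourier transform is
\[
\hat\phi(\xi) = c_n (1 + \|\xi\|^2)^{-(n+1)/2}
\]
for some constant $c_n > 0$; in particular $\hat\phi$ is strictly positive everywhere and lies in $L^1(\R^n)$. Given a nonzero $\nu \in M(X)$, view $\nu$ as a finite compactly supported signed Borel measure on $\R^n$. Its Fourier transform $\hat\nu(\xi) = \int_X e^{-i \xi \cdot x} \d\nu(x)$ is a continuous (in fact entire) bounded function, and by uniqueness of Fourier transforms of finite signed measures, $\hat\nu$ is not identically zero. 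Writing $\phi$ as an inverse Fourier transform and exchanging the order of integration (justified because $\hat\phi \in L^1$ and $\nu$ is a finite signed measure) produces
\[
\ip{\nu}{\nu}_X = \frac{1}{(2\pi)^n}\int_{\R^n} \hat\phi(\xi)\, |\hat\nu(\xi)|^2 \d\xi.
\]
The integrand is a nonnegative continuous function on $\R^n$, strictly positive on the nonempty open set where $\hat\nu \neq 0$, so the integral is strictly positive. Hence $\ip{-}{-}_X$ is positive definite, and Lemma~\ref{lem:ip-unique} delivers the unique maximising measure.

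The main technical obstacle is the Fourier-analytic input itself: one has to know the explicit form of $\hat\phi$ well enough to see that it is strictly positive and integrable, and one has to justify the triple-integral Fubini exchange in the setting of signed measures. Both are standard, but the strict positivity of $\hat\phi$ is the crucial feature of the Euclidean case and is exactly what allows the argument to go through here; it is this property (rather than mere positive definiteness of finite matrices $(e^{-d(x_i,x_j)})$) that distinguishes Euclidean subsets from general positive definite metric spaces and explains why the uniqueness assertion is special to $\R^n$.
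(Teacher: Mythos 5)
Your proposal is correct and follows essentially the same route as the paper: reduce to positive definiteness of the form $\ip{-}{-}_X$ via Lemma~\ref{lem:ip-unique}, then express $\ip{\nu}{\nu}_X$ as an integral of $\hat{F}\,|\hat{\nu}|^2$ and use the strict positivity of the Fourier transform of $e^{-\|\cdot\|}$ together with injectivity of the Fourier transform on finite signed measures. The only cosmetic difference is that you prove strict positivity directly for nonzero $\nu$, whereas the paper first quotes nonnegativity from positive definiteness of Euclidean subsets and then shows $\ip{\mu}{\mu}_X = 0$ forces $\mu = 0$; the substance is identical.
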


\begin{proof}
Let $X$ be a nonempty compact subset of $\R^n$.  Then $X$ is positive
definite, so by Lemma~2.2 of~\cite{MeckesPositive2013}, $\ip{\mu}{\mu}_X
\geq 0$ for all $\mu \in M(X)$.  By Lemma~\ref{lem:ip-unique}, it now
suffices to prove that if $\ip{\mu}{\mu}_X = 0$ then $\mu = 0$.

Define $F: \R^n \to \R$ by $F(x) = e^{-\|x\|}$.  Then 
\[
\ip{\mu}{\nu}_X = \int_{\R^n} (F * \mu) \d\nu
\]
($\mu, \nu \in M(X)$), where $*$ denotes convolution.  By the standard
properties of the Fourier transform $\hat{\ }$, it follows that 
\[
\ip{\mu}{\mu}_X = \int_{\R^n} \hat{F} |\hat{\mu}|^2 \d\lambda,
\]
where $\lambda$ is Lebesgue measure. But $\hat{F}$ is everywhere strictly
positive (Theorem~1.14 of~\cite{SteinIntroduction1971}), so if
$\ip{\mu}{\mu}_X = 0$ then $\hat{\mu} = 0$ almost everywhere, which in turn
implies that $\mu = 0$ (paragraph~1.7.3(b) of~\cite{RudinFourier1962}).
\end{proof}

\section{The uniform measure}
\label{S_uniform}

For many of the spaces that arise often in mathematics, there is a choice
of probability measure that seems natural to us.  For finite sets, it is
the uniform measure.  For homogeneous spaces, it is Haar measure.  For
subsets of $\R^n$ with finite nonzero volume, it is normalised Lebesgue
measure.  In this section, we propose a method for assigning a canonical
probability measure to any compact metric space (subject to conditions).
We call it the \emph{uniform measure}.

There are two thoughts behind this method.  The first is very standard in
statistics: take the probability distribution that maximises entropy.  For
example, in the context of differential entropy of probability distributions
on $\R$, the maximum entropy distribution supported on a prescribed bounded
interval is the uniform distribution on it, and the maximum entropy
distribution with a prescribed mean and variance is the normal
distribution.

However, on a compact metric space $X$, the maximising measure is in one
sense not a suitable choice of `uniform' measure.  The problem is
scale-invariance.  For many uses of metric spaces, the choice of scale
factor is somewhat arbitrary: if we multiplied all the distances by a
constant $t > 0$, we would regard the space as essentially unchanged.
(Formally, scaling by $t$ defines an automorphism of the category of metric
spaces, for any of the standard notions of map between metric spaces.)  But
the maximising measure depends critically on the scale factor, as almost
every example in the previous section shows.

There now enters the second thought: pass to the large-scale
limit.  Thus, we define the uniform measure on a space to be the limit of
the maximising measures as the scale factor increases to $\infty$.  Let us
now make this formal.

\begin{defn}
Let $X = (X, d)$ be a metric space and $t \in (0, \infty)$.  We write $td$
for the metric on $X$ defined by $(td)(x, y) = t\cdot d(x, y)$, and $K^t$
for the similarity kernel on $X$ defined by $K^t(x, y) = e^{-td(x, y)}$.
We denote by $tX$ the set $X$ equipped with the metric $td$.
\end{defn}

By Proposition~\ref{prop:md_monotone_2}, $\Dmx{tX}$ is increasing in $t \in
(0, \infty)$, for any compact metric space $X$.  If $X$ is a subspace of
$\R^n$ then $tX = (X, td)$ is isometric to $(\{tx \st x \in X\}, d)$, where
$d$ is Euclidean distance. But we will regard the set $X$ as fixed and the
metric as varying with $t$.

\begin{defn}
\label{defn:ufm}
Let $X$ be a compact metric space.  Suppose that $tX$ has a unique
maximising measure $\mu_t$ for all $t \gg 0$, and that $\lim_{t \to \infty}
\mu_t$ exists in $P(X)$.  Then the \demph{uniform measure} on $X$ is
$\mu_X = \lim_{t \to \infty} \mu_t$.  
\end{defn}

The uniform measure has the desired property of scale-invariance:

\begin{lem}
Let $X$ be a compact metric space and $t > 0$.  Then $\mu_X = \mu_{tX}$,
with one side of the equality defined if and only if the other is.
\end{lem}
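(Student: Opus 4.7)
The plan is to prove this by a simple change of variables, using the trivial fact that rescaling commutes with itself: for any metric space $X$ and positive reals $s, t$, the metric on $s(tX)$ is $s \cdot (td) = (st) \cdot d$, so $s(tX) = (st)X$ as metric spaces. Consequently, the set of maximising probability measures on $s(tX)$ coincides with the set of maximising probability measures on $(st)X$, and in particular one is a singleton iff the other is.

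First I would unfold the definition of $\mu_{tX}$: it is $\lim_{s \to \infty} \nu_s$, where $\nu_s$ is the unique maximising measure on $s(tX)$ for $s \gg 0$. By the observation above, $\nu_s$ is defined (as the unique maximising measure) exactly when $\mu_{st}$ is, and in that case $\nu_s = \mu_{st}$, where $\mu_r$ denotes the unique maximising measure on $rX$.

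Next I would perform the substitution $r = st$. Since $t > 0$ is fixed, the condition ``$s \gg 0$'' is equivalent to ``$r \gg 0$'', so uniqueness of the maximising measure on $s(tX)$ for all sufficiently large $s$ is equivalent to uniqueness of the maximising measure on $rX$ for all sufficiently large $r$. Moreover, the map $s \mapsto st$ is a homeomorphism from a neighbourhood of $\infty$ to itself, so the net $(\nu_s)_{s \to \infty} = (\mu_{st})_{s \to \infty}$ converges in $P(X)$ iff $(\mu_r)_{r \to \infty}$ does, and the limits agree. This gives $\mu_{tX} = \mu_X$, with either side defined iff the other is.

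There is no real obstacle here; the entire content is that the two definitional requirements in Definition~\ref{defn:ufm} are preserved under a change of variable, which is immediate. The only thing one must be careful about is to observe \emph{both} directions of the ``defined iff defined'' statement, i.e.\ to note that the substitution $r \leftrightarrow st$ is a bijection on tails of $(0,\infty)$ because $t > 0$.
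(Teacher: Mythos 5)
Your proof is correct and is essentially the paper's: the paper simply declares the lemma ``immediate from the definition,'' and your argument spells out exactly that -- the identification $s(tX) = (st)X$ and the change of variable $r = st$ on tails of $(0,\infty)$, which transports both the uniqueness requirement and the existence of the limit in Definition~\ref{defn:ufm}. Nothing further is needed.
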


\begin{proof}
This is immediate from the definition.
\end{proof}

The next few results show that on several significant classes of space, the
uniform measure is the canonical or `obvious' probability measure.

\begin{prop}
\label{prop:ufm-fin}
On a nonempty finite metric space, the uniform measure exists and is
the uniform probability measure in the standard sense.
\end{prop}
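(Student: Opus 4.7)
The plan is to reduce to Example~\ref{eg:scattered}, which handles well-separated finite spaces, and then track the unique weight measure as $t \to \infty$.

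Let $X = \{x_1, \ldots, x_n\}$; the case $n = 1$ is trivial, so assume $n \geq 2$. For $t$ large enough that $t \cdot \min_{i \neq j} d(x_i, x_j) > \log(n - 1)$, Example~\ref{eg:scattered} applies to $tX$: it is positive definite and has a unique positive weight measure. Writing $Z^t$ for the similarity matrix with entries $Z^t_{ij} = e^{-t d(x_i, x_j)}$, this weighting is the unique vector $\vec{w}^t \in \R^n$ solving $Z^t \vec{w}^t = \vec{1}$; by hypothesis it has strictly positive entries. Lemma~\ref{lem:pos-pos-dmax} then supplies a unique maximising measure on $tX$, namely
\[
\mu_t = \frac{1}{|tX|} \sum_{i=1}^n w^t_i \, \delta_{x_i},
\qquad
|tX| = \sum_{i=1}^n w^t_i.
\]

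To conclude, I will show $\mu_t \to \tfrac{1}{n} \sum_i \delta_{x_i}$ as $t \to \infty$. Each off-diagonal entry $e^{-t d(x_i, x_j)}$ of $Z^t$ tends to $0$ while the diagonal entries equal $1$, so $Z^t \to I$ in the space of $n \times n$ real matrices. Matrix inversion is continuous at $I$, so $\vec{w}^t = (Z^t)^{-1} \vec{1} \to \vec{1}$; in particular $|tX| \to n$, and therefore
\[
\mu_t = \sum_{i=1}^n \frac{w^t_i}{|tX|}\, \delta_{x_i} \longrightarrow \frac{1}{n}\sum_{i=1}^n \delta_{x_i}
\]
in $P(X)$. (On a finite discrete space, weak$^*$ convergence of probability measures is just coordinatewise convergence of the coefficients.) The limit is the standard uniform probability measure, so the uniform measure in the sense of Definition~\ref{defn:ufm} exists and has the stated form.

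There is no serious obstacle: once Example~\ref{eg:scattered} is invoked to furnish the positive weighting at large $t$, the remaining content is elementary linear algebra plus continuity of matrix inversion at~$I$.
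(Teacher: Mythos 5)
Your proposal is correct and follows essentially the same route as the paper's proof: invoke Example~\ref{eg:scattered} at large $t$ to get the positive weighting $(Z^t)^{-1}\vec{1}$, apply Lemma~\ref{lem:pos-pos-dmax} to identify its normalisation as the unique maximising measure $\mu_t$, and conclude via $Z^t \to I$ and continuity of inversion that $\mu_t$ converges to the uniform probability measure.
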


\begin{proof}
Let $X = \{x_1, \ldots, x_n\}$ be a finite metric space.  For $t > 0$, write
$Z^t$ for the $n \times n$ matrix with entries $e^{-td(x_i, x_j)}$.  For $t
\gg 0$, the space $tX$ is positive definite with positive weight measure,
by Example~\ref{eg:scattered}.  Expressed as a vector, the weight measure
on $tX$ (for $t \gg 0$) is
\[
(Z^t)^{-1} 
\begin{pmatrix} 
1 \\ \vdots \\ 1 
\end{pmatrix}.
\]
The normalisation of this weight measure is the unique maximising measure
$\mu_t$ on $tX$, by Lemma~\ref{lem:pos-pos-dmax}.  As $t \to \infty$, we
have $Z^t \to I$ in the topological group $\mathrm{GL}_n(\R)$, giving
$(Z^t)^{-1} \to I$ and so $\mu_t \to (1/n, \ldots, 1/n)$.
\end{proof}

This result shows that the uniform measure need not be uniformly
distributed, in the sense that balls of the same radius may have different
measures. 

The concept of uniform measure also behaves well on homogeneous spaces.  We
restrict to those spaces $X$ such that $tX$ is positive definite for every
$t > 0$, which is equivalent to the classical condition that $X$ is of
\demph{negative type}.  (The proof of equivalence is essentially due
to Schoenberg; see Theorem~3.3 of~\cite{MeckesPositive2013}.)

\begin{prop}
\label{prop:ufm-hgs}
On a nonempty, homogeneous, compact metric space of negative type,
the uniform measure exists and is the Haar probability
measure.
\end{prop}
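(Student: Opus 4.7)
The plan is to observe that for a homogeneous space, the maximising measure does not actually change with the scale factor, so the limit is trivial. Concretely, I would proceed as follows.

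First, I would note that multiplying the metric by $t > 0$ preserves the isometry group of $X$: if $\phi : X \to X$ is an isometry of $(X,d)$ then it is also an isometry of $(X, td)$, and vice versa. Hence $tX$ is still homogeneous, and moreover the Haar probability measure $\mu$ is the same isometry-invariant probability measure on $X$ and on $tX$ (by the uniqueness clause in Theorems~4.11 and~5.3 of~\cite{SteinlageHaar1975}, as cited in Example~\ref{eg:ms-hgs}).

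Next, I would apply Example~\ref{eg:ms-hgs} to $tX$ for each $t > 0$. Since $X$ is of negative type, $tX$ is positive definite, and it is nonempty, compact, and homogeneous. Example~\ref{eg:ms-hgs} therefore yields a positive weight measure on $tX$ obtained by rescaling $\mu$, and Lemma~\ref{lem:pos-pos-dmax}(\ref{part:ppd-meas}) then gives that $\mu$ is the \emph{unique} maximising measure on $tX$. Thus the measure $\mu_t$ appearing in Definition~\ref{defn:ufm} is well-defined and equals $\mu$ for every $t > 0$.

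Finally, since $\mu_t = \mu$ is constant in $t$, the limit $\lim_{t \to \infty} \mu_t$ exists in $P(X)$ and equals $\mu$. Hence the uniform measure on $X$ exists and coincides with the Haar probability measure. There is really no serious obstacle: the negative-type hypothesis is used precisely to bring Example~\ref{eg:ms-hgs} and Lemma~\ref{lem:pos-pos-dmax} to bear on every rescaling $tX$ simultaneously, and uniqueness of the Haar probability measure then forces the maximisers at different scales to agree.
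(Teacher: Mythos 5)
Your proposal is correct and follows essentially the same route as the paper: the paper likewise observes that scaling does not change the isometry group, so the Haar probability measure on $X$ is also the Haar probability measure on $tX$, and then invokes Example~\ref{eg:ms-hgs} (which itself rests on Lemma~\ref{lem:pos-pos-dmax} via positive definiteness of $tX$) to conclude $\mu_t = \mu$ for all $t$, making the limit trivial.
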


\begin{proof}
Let $X$ be such a space.  The Haar probability measure $\mu$ on $X$ is the
unique isometry-invariant probability measure on $X$, so it is also the
Haar probability measure on $tX$ for every $t > 0$.  Hence by
Example~\ref{eg:ms-hgs}, $\mu_t = \mu$ for all $t$, and the result follows
trivially. 
\end{proof}

\begin{prop}
\label{prop:ufm-interval}
On the line segment $[0, \ell]$ of length $\ell > 0$, the uniform
measure exists and is Lebesgue measure restricted to $[0, \ell]$,
normalised to a probability measure.
\end{prop}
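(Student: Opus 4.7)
The plan is to write down the unique maximising measure $\mu_t$ on $t[0,\ell]$ explicitly for each $t > 0$ and then take its weak-* limit as $t \to \infty$. The atomic part at the two endpoints has fixed total mass $1$, while the absolutely continuous part scales with $t$; after normalising, the atoms get crushed and only Lebesgue measure survives in the limit.

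First, I would identify the weight measure on $t[0,\ell]$ with respect to the kernel $K^t(x,y) = e^{-t|x-y|}$. Either by invoking Example~\ref{eg:ms-interval} via the canonical isometry $t[0,\ell] \cong [0, t\ell]$, or by a short direct verification that
\[
\int_0^\ell e^{-t|x-y|} \d\bigl(\tfrac{1}{2}\delta_0 + \tfrac{1}{2}\delta_\ell + \tfrac{t}{2}\lambda_{[0,\ell]}\bigr)(y) = 1
\]
for every $x \in [0,\ell]$, I obtain the positive weight measure $w_t = \tfrac{1}{2}(\delta_0 + \delta_\ell + t\lambda_{[0,\ell]})$, with total mass $|t[0,\ell]| = 1 + t\ell/2$. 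Since $[0,\ell]$ is a compact subset of $\R$, it is positive definite, so Lemma~\ref{lem:pos-pos-dmax} applies and the unique maximising measure on $t[0,\ell]$ is the normalisation
\[
\mu_t = \frac{\delta_0 + \delta_\ell + t\lambda_{[0,\ell]}}{2 + t\ell}.
\]

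Finally I evaluate the weak-* limit. For any $f \in C([0,\ell])$,
\[
\int_0^\ell f \d\mu_t = \frac{f(0) + f(\ell) + t \int_0^\ell f \d\lambda}{2 + t\ell} \longrightarrow \frac{1}{\ell} \int_0^\ell f \d\lambda \quad \text{as } t \to \infty,
\]
so $\mu_t \to \lambda_{[0,\ell]}/\ell$ in $P([0,\ell])$. By Definition~\ref{defn:ufm}, this establishes that the uniform measure on $[0,\ell]$ exists and equals normalised Lebesgue measure. There is no real obstacle here: the only subtle point is correctly tracking how the scaling by $t$ propagates through the weight-measure formula — the absolutely continuous component acquires a factor of $t$ while the endpoint atoms keep their weight $1/2$ — after which the limit is immediate.
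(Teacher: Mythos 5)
Your proposal is correct and follows essentially the same route as the paper: identify the positive weight measure $\tfrac{1}{2}(\delta_0+\delta_\ell+t\lambda_{[0,\ell]})$ on $t[0,\ell]$ (the paper does this by transferring Example~\ref{eg:ms-interval} across the isometry $t[0,\ell]\cong[0,t\ell]$, exactly as in your first option), apply Lemma~\ref{lem:pos-pos-dmax} to get the unique maximising measure $\mu_t=(\delta_0+\delta_\ell+t\lambda_{[0,\ell]})/(2+t\ell)$, and pass to the weak-* limit. Your explicit verification of the weight-measure identity and of the limit against test functions are just spelled-out versions of steps the paper leaves implicit.
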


\begin{proof}
Write $X = [0, \ell]$ and $d$ for the metric on $\R$.  For each $t > 0$,
the metric space $tX = (X, td)$ is isometric to the interval $[0, t\ell]$
with metric $d$, which by Example~\ref{eg:ms-interval} has unique
maximising measure
\[
\frac{\delta_0 + \delta_{t\ell} + \lambda_{[0, t\ell]}}{2 + t\ell}.
\]
Transferring this measure across the isometry, $tX$ therefore has unique
maximising measure
\[
\mu_t = 
\frac{\delta_0 + \delta_\ell + t\lambda_{[0, \ell]}}{2 + t\ell}.
\]
Hence $\mu_t \to \lambda_{[0, \ell]}/\ell$ as $t \to \infty$.
\end{proof}

We now embark on the proof that Proposition~\ref{prop:ufm-interval} extends
to higher dimensions.  Let $X$ be a compact subspace of $\R^n$ with nonzero
volume, write $\lambda_X$ for $n$-dimensional Lebesgue measure $\lambda$
restricted to $X$, and write $\wext{\lambda_X} = \lambda_X/\lambda(X)$ for
its normalisation to a probability measure.  We will show that
$\wext{\lambda_X}$ is the uniform measure on $X$.  Unlike in
Propositions~\ref{prop:ufm-fin}--\ref{prop:ufm-interval}, we have no
formula for the maximising measure on $tX$, so the argument is 
less direct.

We begin by showing that at large scales, $\wext{\lambda_X}$ comes close to
maximising diversity, in the sense of the last part of the following
proposition.

\begin{prop}
\label{prop:preufm-euc}
Let $X$ be a compact subspace of $\R^n$ with nonzero volume $\lambda(X)$.
Then
\[
\lim_{t \to \infty} \frac{\Dmx{tX}}{|tX|} = 1
\qquad \text{and} \qquad
\lim_{t \to \infty} \frac{\Dmx{tX}}{t^n} = \frac{\lambda(X)}{n!\omega_n},
\]
where $\omega_n$ is the volume of the unit ball in $\R^n$.  Moreover, for
all $q \in [0, \infty]$,
\[
\lim_{t \to \infty} \frac{D_q^{tX}(\wext{\lambda_X})}{\Dmx{tX}} 
=
1.
\]
\end{prop}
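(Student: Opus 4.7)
The plan is to sandwich everything between the magnitude $|tX|$ on one side and the diversity of order $\infty$ of $\wext{\lambda_X}$ on the other. For every $q \in [0, \infty]$ and every $t > 0$,
\[
D_\infty^{tX}(\wext{\lambda_X})
\leq D_q^{tX}(\wext{\lambda_X})
\leq \Dmx{tX}
\leq |tX|.
\]
Here the rightmost inequality is~\eqref{eq:max-leq-mag}, applicable because Euclidean subspaces are positive definite; the middle one is by definition of $\Dmx{tX}$; and the leftmost is the monotonicity of $D_q^K$ in $q$ given by Proposition~\ref{prop:diversity_cts_q}(\ref{part:dcq-dec}). Once I show that both ends of the sandwich satisfy $(\cdot)/t^n \to \lambda(X)/(n!\omega_n)$, every intermediate quantity does too, and all three limits in the proposition fall out simultaneously.

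For the upper end, I would simply invoke the Barceló--Carbery theorem~\cite{Barcelomagnitudes2018} flagged in the introduction, which gives $\lim_{t\to\infty} |tX|/t^n = \lambda(X)/(n!\omega_n)$. For the lower end, I would estimate the typicality function by a one-line pointwise bound: for every $x \in X$,
\[
(K^t \wext{\lambda_X})(x)
= \frac{1}{\lambda(X)} \int_X e^{-t\|x - y\|} \d\lambda(y)
\leq \frac{1}{\lambda(X)} \int_{\R^n} e^{-t\|z\|} \d\lambda(z)
= \frac{n!\,\omega_n}{t^n \lambda(X)},
\]
the last equality being a polar-coordinates computation that uses the definition of $\omega_n$. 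Taking essential suprema then gives $\esssup_{\wext{\lambda_X}} K^t \wext{\lambda_X} \leq n!\omega_n/(t^n \lambda(X))$, whence
\[
D_\infty^{tX}(\wext{\lambda_X}) \geq \frac{t^n \lambda(X)}{n!\,\omega_n}.
\]

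Dividing the sandwich by $t^n$ and letting $t \to \infty$ then forces every quotient to converge to $\lambda(X)/(n!\omega_n)$, which proves the first two displayed limits. The third limit follows by writing
\[
\frac{D_q^{tX}(\wext{\lambda_X})}{\Dmx{tX}}
= \frac{D_q^{tX}(\wext{\lambda_X})/t^n}{\Dmx{tX}/t^n},
\]
whose numerator and denominator both tend to the strictly positive value $\lambda(X)/(n!\omega_n)$, so the ratio tends to $1$ for every $q \in [0, \infty]$.

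The only substantive input here is the Barceló--Carbery asymptotic; everything else is a pointwise upper bound on a convolution and the monotonicity of power means. No boundary analysis or absolute-continuity argument is required, so the main obstacle is really just having that external theorem available. A minor point to verify is that the sandwich works uniformly across $q \in [0, \infty]$, including the endpoints, but this is immediate from the monotonicity statement of Proposition~\ref{prop:diversity_cts_q}(\ref{part:dcq-dec}).
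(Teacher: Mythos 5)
Your argument is correct and is essentially the paper's own proof: the same chain of inequalities $D_\infty^{tX}(\wext{\lambda_X}) \leq D_q^{tX}(\wext{\lambda_X}) \leq \Dmx{tX} \leq |tX|$, the same pointwise convolution bound $(K^t\wext{\lambda_X})(x) \leq n!\omega_n/(t^n\lambda(X))$, and the same appeal to the Barcel\'o--Carbery asymptotic for $|tX|$. The only cosmetic difference is that you normalise the sandwich by $t^n$ where the paper divides through by $|tX|$.
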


\begin{proof}
We first show that for all $t > 0$ and $q \in [0, \infty]$,
\begin{equation}
\label{eq:ue1}
|tX| \geq
\Dmx{tX} \geq
D_q^{tX}(\wext{\lambda_X}) \geq
\frac{\lambda(X)t^n}{n!\omega_n}.
\end{equation}
The first inequality in~\eqref{eq:ue1} is an instance
of~\eqref{eq:max-leq-mag}, since $tX$ is positive definite.  The second
is immediate.  For the third, diversity is decreasing in its order,
so it suffices to prove the case $q = \infty$.  The inequality then states
that
\[
\frac{1}{\sup_{x \in \supp\wext{\lambda_X}} (K^t \wext{\lambda_X})(x)}
\geq
\frac{\lambda(X)t^n}{n!\omega_n},
\]
or equivalently, for all $x \in \supp\wext{\lambda_X}$,
\begin{equation}
\label{eq:ue2}
(K^t \wext{\lambda_X})(x)
\leq
\frac{n!\omega_n}{\lambda(X)t^n}.
\end{equation}
Now for all $x \in X$,
\[
(K^t \wext{\lambda_X})(x)       
=
\frac{1}{\lambda(X)} \int_X e^{-t\|x - y\|} \d y
\leq
\frac{1}{\lambda(X)} \int_{\R^n} e^{-t\|x - y\|} \d y.
\]
The last integral is $n!\omega_n/t^n$, by a standard calculation (as in
Lemma~3.5.9 of~\cite{Leinstermagnitude2013}).  So we have now proved
inequality~\eqref{eq:ue2} and, therefore, all of~\eqref{eq:ue1}.

Dividing~\eqref{eq:ue1} through by $|tX|$ gives
\[
1 \geq
\frac{\Dmx{tX}}{|tX|} \geq
\frac{D_q^{tX}(\wext{\lambda_X})}{|tX|} \geq
\frac{\lambda(X)t^n}{n! \omega_n |tX|}
\]
for all $t > 0$ and $q \in [0, \infty]$.  Theorem~1
of~\cite{Barcelomagnitudes2018} states, in part, that the final term
converges to $1$ as $t \to \infty$.  Hence all terms do, and the result
follows.
\end{proof}

\begin{rmks}
\label{rmks:Dmax-euc}
\begin{enumerate}[(i)]
\item
\label{rmk:Dmax-euc-cap}
The fact that $\Dmx{X}/|tX| \to 1$ as $t \to \infty$ is one of a collection
of results expressing the relationship between maximum diversity and
magnitude.  Perhaps the deepest of these is a result of Meckes, who showed
that maximum diversity is equal to a quantity that is already known (if
little explored) in potential theory: up to a constant, $\Dmx{X}$ is the
Bessel capacity of order $(n + 1)/2$ of $X$ (\cite{MeckesMagnitude2015},
Section~6).  He used this fact to prove that for each $n \geq 1$, there is
a constant $\kappa_n$ such that
\[
|X| \leq \kappa_n \Dmx{X}
\]
for all nonempty compact $X \subseteq \R^n$ (Corollary~6.2
of~\cite{MeckesMagnitude2015}).  This is a companion to the elementary
fact that $\Dmx{X} \leq |X|$ (inequality~(\ref{eq:max-leq-mag})).

\item
The second equation in Proposition~\ref{prop:preufm-euc} implies that the
volume of $X \subseteq \R^n$ can be recovered from the function $t \mapsto
\Dmx{tX}$.  This result is in the same spirit as
Theorem~\ref{thm:minkowski}, which states that one can also recover the
Minkowski dimension of $X$ from the asymptotics of $\Dmx{tX}$.
\end{enumerate}
\end{rmks}

\begin{thm}
\label{thm:ufm-euc}
On a compact set $X \subseteq \R^n$ of nonzero Lebesgue measure, the
uniform measure exists and is equal to Lebesgue measure restricted to $X$,
normalised to a probability measure.
\end{thm}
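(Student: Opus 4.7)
The plan is to verify Definition~\ref{defn:ufm} directly. Each $tX$ is a compact subset of Euclidean space, so Proposition~\ref{prop:euc-unique} gives it a unique maximising measure $\mu_t$. It remains to show $\mu_t \to \wext{\lambda_X}$ in $P(X)$ as $t \to \infty$. Since $P(X)$ is weak-$^*$ compact, it suffices to prove that every weak-$^*$ convergent subsequence $\mu_{t_k} \to \mu^*$ has $\mu^* = \wext{\lambda_X}$. I fix such a subsequence and write $\sigma_t = \wext{\lambda_X} - \mu_t$ and $\sigma^* = \wext{\lambda_X} - \mu^*$; the goal is $\sigma^* = 0$.

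The first step is a quadratic energy estimate. For each $t$ the symmetric bilinear form
\[
\ip{\mu}{\nu}_t = \int_X \int_X e^{-t\|x-y\|} \d\mu(x) \d\nu(y)
\]
from~\eqref{eq:form} is positive semidefinite on $M(X)$ (as $tX$ is positive definite), satisfies $\ip{\mu}{\mu}_t = 1/D_2^{tX}(\mu)$ on $P(X)$, and is minimised on $P(X)$ precisely at $\mu_t$ (by Corollary~\ref{cor:one_is_enough}). The one-sided first-order optimality condition at $\mu_t$ along the segment towards $\wext{\lambda_X}$ gives $\ip{\mu_t}{\wext{\lambda_X} - \mu_t}_t \geq 0$, so expanding $\ip{\wext{\lambda_X}}{\wext{\lambda_X}}_t$ and dropping this nonnegative cross term yields
\[
\ip{\sigma_t}{\sigma_t}_t \leq \frac{1}{D_2^{tX}(\wext{\lambda_X})} - \frac{1}{\Dmx{tX}}.
\]
The right-hand side equals $\bigl(1 - D_2^{tX}(\wext{\lambda_X})/\Dmx{tX}\bigr)/D_2^{tX}(\wext{\lambda_X})$, and by Proposition~\ref{prop:preufm-euc} this is $o(1) \cdot O(t^{-n}) = o(t^{-n})$; that is, $t^n \ip{\sigma_t}{\sigma_t}_t \to 0$.

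The final step converts this energy bound into a weak-$^*$ statement via Fourier analysis, along the lines of the proof of Proposition~\ref{prop:euc-unique}. One has $\widehat{e^{-t\|\cdot\|}}(\xi) = c_n t/(t^2 + \|\xi\|^2)^{(n+1)/2}$ for an explicit $c_n > 0$, and Plancherel gives
\[
\ip{\sigma}{\sigma}_t = C_n \int_{\R^n} \frac{t\,|\hat{\sigma}(\xi)|^2}{(t^2 + \|\xi\|^2)^{(n+1)/2}} \d\xi
\]
for signed measures $\sigma$ supported in $X$. On the ball $\{\|\xi\| \leq R\}$, for $t \geq R$ the kernel is bounded below by a positive constant times $t^{-n}$, so
\[
\int_{\|\xi\| \leq R} |\hat{\sigma}_{t_k}(\xi)|^2 \d\xi \leq C_{n,R}\, t_k^n\, \ip{\sigma_{t_k}}{\sigma_{t_k}}_{t_k} \to 0.
\]
On the other hand, weak-$^*$ convergence $\mu_{t_k} \to \mu^*$ forces pointwise convergence $\hat{\sigma}_{t_k}(\xi) \to \hat{\sigma^*}(\xi)$ (testing against the bounded continuous character $x \mapsto e^{-i\xi\cdot x}$), with uniform bound $|\hat{\sigma}_{t_k}| \leq 2$; dominated convergence on $\{\|\xi\|\leq R\}$ yields $\int_{\|\xi\|\leq R} |\hat{\sigma^*}|^2 \d\xi = 0$ for every $R > 0$. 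Hence $\hat{\sigma^*} \equiv 0$, and Fourier injectivity on finite signed measures gives $\sigma^* = 0$. The main delicacy lies in this Fourier passage: one must match the $o(t^{-n})$ decay of the energy against the $t^{-n}$ decay of the kernel on bounded frequency sets, which is precisely why the asymptotic matching of leading constants in Proposition~\ref{prop:preufm-euc} (not merely $\Dmx{tX} = \Theta(t^n)$) is essential.
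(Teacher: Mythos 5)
Your argument is correct, and it takes a genuinely different route from the paper's. You work entirely at order $q=2$: the variational inequality $\ip{\mu_t}{\wext{\lambda_X}-\mu_t}_{tX} \geq 0$ at the energy minimiser, combined with the asymptotic matching $D_2^{tX}(\wext{\lambda_X})/\Dmx{tX} \to 1$ and $\Dmx{tX} \sim \lambda(X)t^n/n!\omega_n$ from Proposition~\ref{prop:preufm-euc}, gives $t^n \ip{\sigma_t}{\sigma_t}_{tX} \to 0$, and you then convert this into weak$^*$ convergence by comparing the $o(t^{-n})$ energy with the explicit lower bound $\widehat{F^t}(\xi) \gtrsim t^{-n}$ on bounded frequency sets---in effect a quantitative version of the Fourier argument in Proposition~\ref{prop:euc-unique}, close in spirit to the capacity-theoretic viewpoint of Remarks~\ref{rmks:Dmax-euc}. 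The paper instead argues pointwise: it uses the order-$\infty$ bound $K^t\mu_t \geq 1/\Dmx{tX}$ of Lemma~\ref{lem:supertypical}(\ref{part:st-st}) together with mollification (Lemma~\ref{lem:approx-conv}) to prove $\liminf_t \int f \d\mu_t \geq \int f \d\wext{\lambda_X}$ for nonnegative $f$, then adds constants and applies the inequality to $-f$. Both proofs rest on Propositions~\ref{prop:euc-unique} and~\ref{prop:preufm-euc}; yours additionally yields a quantitative $L^2$ estimate on the low-frequency part of $\wext{\lambda_X} - \mu_t$ (so a rate could in principle be extracted, and the method transfers to any symmetric kernel with strictly positive Fourier transform of known decay), while the paper's is softer, needing no explicit Fourier computation beyond what is already in the uniqueness proposition. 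Two small points to tidy up: the family $(\mu_t)$ is a net, so the reduction to convergent subsequences should invoke metrizability of $P(X)$ (which holds since $C(X)$ is separable); and your passage from $\int_{\|\xi\|\leq R}|\hat{\sigma}_{t_k}|^2 \d\lambda \to 0$ plus pointwise convergence $\hat{\sigma}_{t_k} \to \hat{\sigma^*}$ is most cleanly finished by Fatou rather than dominated convergence, though either works with the bound $|\hat{\sigma}_{t_k}| \leq 2$.
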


\begin{proof}
By Proposition~\ref{prop:euc-unique}, $tX$ has a unique maximizing measure
$\mu_t$ for each $t > 0$.  We must prove that $\lim_{t \to \infty} \int_X
f \d\mu_t = \int_X f\d\wext{\lambda_X}$ for each $f \in C(X)$.

Define $F \in C(\R^n)$ by $F(x) = e^{-\|x\|}$; then $\int_{\R^n} F
\d\lambda = n!\omega_n$, as noted in the proof of
Proposition~\ref{prop:preufm-euc}.  We will apply
Lemma~\ref{lem:approx-conv} to the function $G = F/n!\omega_n$.  We have
$G_t = t^n F^t/n!\omega_n$ for $t > 0$, and $\int_{\R^n} G \d\lambda = 1$.

First we prove the weaker statement that for all nonnegative $f \in C(X)$, 
\begin{equation}
\label{eq:liminf}
\liminf_{t \to \infty} \int_X f \d\mu_t 
\geq 
\int_X f \d\wext{\lambda_X}.
\end{equation}
Fix $f$, and choose a nonnegative extension $\bar{f} \in C(\R^n)$ of
bounded support.  Let $\epsilon > 0$.  By Lemma~\ref{lem:approx-conv}, we
can choose $T_1 > 0$ such that for all $t \geq T_1$,
\[
\int_{\R^n} 
\bar{f} \cdot \biggl( \frac{t^n F^t}{n!\omega_n} * \mu_t \biggr) 
\d\lambda
-
\int_{\R^n} \bar{f} \d\mu_t
\leq 
\frac{\epsilon}{2}.
\]
By Proposition~\ref{prop:preufm-euc}, we can also choose $T_2 > 0$ such
that for all $t \geq T_2$,
\[
\frac{t^n/n!\omega_n}{\Dmx{tX}} 
\geq 
\frac{1}{\lambda(X)} -  
\frac{\epsilon}{2\int_X f \d\lambda}.
\]
Then for all $t \geq \max\{T_1, T_2\}$,
\begin{align}
\int_X f \d\mu_t      &
=
\int_{\R^n} \bar{f} \d\mu_t   
\label{eq:euc1}    \\
&
\geq
\int_{\R^n} 
\bar{f} \cdot \biggl( \frac{t^n F^t}{n!\omega_n} * \mu_t \biggr) 
\d\lambda
- \frac{\epsilon}{2}
\label{eq:euc2}    \\
&
\geq
\int_X
f \cdot \biggl( \frac{t^n F^t}{n!\omega_n} * \mu_t \biggr) 
\d\lambda
- \frac{\epsilon}{2}
\label{eq:euc3}    \\
&
=
\int_X f \cdot \frac{t^n}{n!\omega_n} (K^t \mu_t) \d\lambda 
- \frac{\epsilon}{2}
\label{eq:euc4}    \\
&
\geq
\int_X f \cdot \frac{t^n/n!\omega_n}{\Dmx{tX}} \d\lambda
- \frac{\epsilon}{2}
\label{eq:euc5}    \\
&
\geq
\int_X f \d\wext{\lambda_X}
- \epsilon,
\label{eq:euc6}
\end{align}
where~\eqref{eq:euc1} holds because $\mu_t$ is supported on $X$,
\eqref{eq:euc2} because $t \geq T_1$, \eqref{eq:euc3}~because $\bar{f}$,
$F^t$ and $\mu_t$ are nonnegative, \eqref{eq:euc4}~because $F^t * \mu_t =
K^t\mu_t$, \eqref{eq:euc5}~by
Lemma~\ref{lem:supertypical}(\ref{part:st-st}), and~\eqref{eq:euc6} because
$t \geq T_2$ and $f \geq 0$.  The claimed inequality~\eqref{eq:liminf}
follows.

Now observe that if $f \in C(X)$ satisfies~\eqref{eq:liminf} then so does
$f + c$ for all constants~$c$.  But every function in $C(X)$ can be
expressed as the sum of a continuous nonnegative function and a constant,
so~\eqref{eq:liminf} holds for all $f \in C(X)$.  Let $f \in C(X)$.
Applying~\eqref{eq:liminf} to $-f$ in place of $f$ gives
\[
\limsup_{t \to \infty} \int_X f \d\mu_t
\leq
\int_X f \d\wext{\lambda_X},
\]
which together with~\eqref{eq:liminf} itself gives the desired result.  
\end{proof}

\begin{rmk}
Let $X \subseteq \R^n$ be a compact set of nonzero volume.  Then
$\supp\mu_t \to X$ in the Hausdorff metric $\dH$ as $t \to \infty$.
Indeed, Lemma~\ref{lem:supertypical}(\ref{part:st-close}) applied to the
similarity kernel $K^t$ gives $t \dH(X, \supp \mu_t) \leq \Hmx{tX}$, so
\[
\dH(X, \supp\mu_t)
\leq
\frac{\Hmx{tX}}{t}
=
\frac{\Hmx{tX}}{\log t} \cdot \frac{\log t}{t}
\to
n \cdot 0
=
0
\]
as $t \to \infty$, by Theorem~\ref{thm:minkowski}.  (The same argument
applies to any compact metric space of finite Minkowski dimension.)

However, the support of the uniform measure $\wext{\lambda_X} =
\lim_{t \to \infty} \mu_t$ need not be $X$; that is, some nonempty open
sets may have measure zero.  Any nontrivial union of an $n$-dimensional set
with a lower-dimensional set gives an example.
\end{rmk}


\section{Open questions}\label{S_conjectures}

(1)~As a numerical invariant of compact metric spaces (and more generally,
of symmetric spaces with similarity), how does maximum diversity behave with
respect to products, unions, etc., of spaces?  What are the maximising
measures on a product or union of spaces, and what is the uniform measure?

(2)~Beyond the finite case, very few examples of maximising measures are
known.  What, for instance, is the maximising measure on a Euclidean ball or
cube?  We do not even know its support.  In the case of a Euclidean ball,
we conjecture that the support of the maximising measure is a finite union
of concentric spheres, the number of spheres depending on the radius.

(3)~The uniform measure, when defined, is a canonical probability measure
on a given metric space. But so too is the Hausdorff measure.  More
exactly, if the Hausdorff dimension $d$ of $X$ is finite then we have the
Hausdorff measure $\mathcal{H}^d$ on $X$, which if $0 < \mathcal{H}^d(X) <
\infty$ can be normalised to a probability measure.  What is the
relationship between the Hausdorff probability measure and the uniform
measure?  It is probably not simple: for example, on $\{1, 1/2, 1/3,
\ldots, 0\} \subseteq \R$, the uniform measure is well-defined (it is
$\delta_0$), but the Hausdorff probability measure is not.  

(4)~What is the relationship between our notion of the uniform measure on a
compact metric space and that proposed by Ostrovsky and
Sirota~\cite{OstrovskyUniform2014} (based on entropy of a
different kind)?

(5)~For finite spaces with similarity, the diversity measures
$D_q^K$ were first introduced in ecology~\cite{LeinsterMeasuring2012} and
have been successfully applied there.  What are the biological applications
of our diversity measures on non-finite spaces?  In particular, in 
microbial biology it is common to treat the space of possible organisms as
a continuum.  Sometimes groupings are created, such as serotypes (strains)
of a virus or operational taxonomic units (genetically similar classes) of
bacteria, but it is recognised that these can be artificial.  What
biological information do our diversity measures
convey about continuous spaces of organisms?



\begin{thebibliography}{10}

\bibitem{BCMV}
M.~G. Bakker, J.~M. Chaparro, D.~K. Manter, and J.~M. Vivanco.
\newblock Impacts of bulk soil microbial community structure on rhizosphere
  microbiomes of \textit{{Z}ea mays}.
\newblock {\em Plant Soil}, 392:115--126, 2015.

\bibitem{Barcelomagnitudes2018}
J.~A. Barcel{\'o} and A.~Carbery.
\newblock On the magnitudes of compact sets in {E}uclidean spaces.
\newblock {\em American Journal of Mathematics}, 140(2):449--494, 2018.

\bibitem{BorceuxHandbook1994a}
F.~Borceux.
\newblock {\em Handbook of Categorical Algebra 2: Categories and Structures}.
\newblock Cambridge University Press, 1994.

\bibitem{BourbakiIntegration1965}
N.~Bourbaki.
\newblock {\em Int{\'e}gration: Chapitres 1 {\`a} 4}.
\newblock {\'E}l{\'e}ments de Math{\'e}matique. Springer, Berlin, 1965.

\bibitem{BRBT}
J.~F. Bromaghin, K.~D. Rode, S.~M. Budge, and G.~W. Thiemann.
\newblock Distance measures and optimization spaces in quantitative fatty acid
  signature analysis.
\newblock {\em Ecology and Evolution}, 5:1249--1262, 2015.

\bibitem{CMLT}
L.~Chalmandrier, T.~M{\"u}nkem{\"u}ller, S.~Lavergne, and W.~Thuiller.
\newblock Effects of species' similarity and dominance on the functional and
  phylogenetic structure of a plant meta-community.
\newblock {\em Ecology}, 96:143--153, 2015.

\bibitem{FollandReal1999}
G.~B. Folland.
\newblock {\em Real Analysis}.
\newblock John Wiley \& Sons, second edition, 1999.

\bibitem{Gimperleinmagnitude2017}
H.~Gimperlein and M.~Goffeng.
\newblock On the magnitude function of domains in {E}uclidean space.
\newblock \emph{American Journal of Mathematics}, to appear, 2017.

\bibitem{GromovMetric2001}
M.~Gromov.
\newblock {\em Metric Structures for {R}iemannian and Non-{R}iemannian Spaces}.
\newblock Birkh{\"a}user, Boston, 2001.

\bibitem{HardyInequalities1952}
G.~H. Hardy, J.~E. Littlewood, and G.~P{\'o}lya.
\newblock {\em Inequalities}.
\newblock Cambridge University Press, Cambridge, 2nd edition, 1952.

\bibitem{HillDiversity1973}
M.~O. Hill.
\newblock Diversity and evenness: a unifying notation and its consequences.
\newblock {\em Ecology}, 54(2):427--432, 1973.

\bibitem{JTYP}
A.~Jeziorski, A.~J. Tanentzap, N.~D. Yan, A.~M. Paterson, M.~E. Palmer, J.~B.
  Korosi, J.~A. Rusak, M.~T. Arts, W.~Keller, R.~Ingram, A.~Cairns, and J.~P.
  Smol.
\newblock The jellification of north temperate lakes.
\newblock {\em Proceedings of the Royal Society~B}, 282:20142449, 2015.

\bibitem{KlenkeProbability2013}
A.~Klenke.
\newblock {\em Probability Theory: A Comprehensive Course}.
\newblock Universitext. Springer, 2013.

\bibitem{LawvereMetric1973}
F.~W. Lawvere.
\newblock Metric spaces, generalized logic, and closed categories.
\newblock {\em Rendiconti del Seminario Matematico e Fisico di Milano},
  43:135--166, 1973.

\bibitem{LeinsterEuler2008}
T.~Leinster.
\newblock The {E}uler characteristic of a category.
\newblock {\em Documenta Mathematica}, 13:21--49, 2008.

\bibitem{Leinstermagnitude2013}
T.~Leinster.
\newblock The magnitude of metric spaces.
\newblock {\em Documenta Mathematica}, 18:857--905, 2013.

\bibitem{LeinsterMeasuring2012}
T.~Leinster and C.~A. Cobbold.
\newblock Measuring diversity: the importance of species similarity.
\newblock {\em Ecology}, 93(3):477--489, 2012.

\bibitem{LeinsterMaximizing2016}
T.~Leinster and M.~W. Meckes.
\newblock Maximizing diversity in biology and beyond.
\newblock {\em Entropy}, 18(3), 2016.

\bibitem{Leinstermagnitude2017}
T.~Leinster and M.~W. Meckes.
\newblock The magnitude of a metric space: from category theory to geometric
  measure theory.
\newblock In N.~Gigli, editor, {\em Measure Theory in Non-Smooth Spaces}. de
  Gruyter Open, 2017.

\bibitem{MagurranMeasuring2004}
A.~E. Magurran.
\newblock {\em Measuring Biological Diversity}.
\newblock Blackwell, Oxford, 2004.

\bibitem{MeckesPositive2013}
M.~W. Meckes.
\newblock Positive definite metric spaces.
\newblock {\em Positivity}, 17(3):733--757, 2013.

\bibitem{MeckesMagnitude2015}
M.~W. Meckes.
\newblock Magnitude, diversity, capacities, and dimensions of metric spaces.
\newblock {\em Potential Analysis}, 42:549--572, 2015.

\bibitem{MunkresTopology2000}
J.~R. Munkres.
\newblock {\em Topology}.
\newblock Prentice Hill, second edition, 2000.

\bibitem{NiculescuConvex2006}
C.~P. Niculescu and L.~{E}rik Persson.
\newblock {\em Convex Functions and Their Applications: A Contemporary
  Approach}.
\newblock Canadian Mathematical Society Books in Mathematics. Springer, New
  York, 2006.

\bibitem{OstrovskyUniform2014}
E.~Ostrovsky and L.~Sirota.
\newblock Uniform measures on the arbitrary compact metric spaces.
\newblock Preprint \href{http://arxiv.org/abs/1403.5725}{arXiv:1403.5725},
  available at arXiv.org, 2014.

\bibitem{Renyimeasures1961}
A.~R{\'e}nyi.
\newblock On measures of entropy and information.
\newblock In J.~Neymann, editor, {\em 4th Berkeley Symposium on Mathematical
  Statistics and Probability}, volume~1, pages 547--561. University of
  California Press, 1961.

\bibitem{RudinFourier1962}
W.~Rudin.
\newblock {\em Fourier Analysis on Groups}.
\newblock Interscience, New York, 1962.

\bibitem{SteinIntroduction1971}
E.~M. Stein and G.~Weiss.
\newblock {\em Introduction to {F}ourier Analysis on {E}uclidean Spaces}.
\newblock Princeton Mathematical Series 32. Princeton University Press,
  Princeton, N.J., 1971.

\bibitem{SteinlageHaar1975}
R.~C. Steinlage.
\newblock Haar measure in locally compact {$T_2$} spaces.
\newblock {\em American Journal of Mathematics}, 97:291--307, 1975.

\bibitem{VPDL}
S.~D. Veresoglou, J.~R. Powell, J.~Davison, Y.~Lekberg, and M.~C. Rillig.
\newblock The {L}einster and {C}obbold indices improve inferences about
  microbial diversity.
\newblock {\em Fungal Ecology}, 11:1--7, 2014.

\bibitem{Willertonmagnitude2014}
S.~Willerton.
\newblock On the magnitude of spheres, surfaces and other homogeneous spaces.
\newblock {\em Geometriae Dedicata}, 168(1):291--310, 2014.

\bibitem{Willertonmagnitude2018}
S.~Willerton.
\newblock On the magnitude of odd balls via potential functions.
\newblock Preprint \href{http://arxiv.org/abs/1804.02174}{arXiv:1804.02174},
  available at arXiv.org, 2018.

\bibitem{WillMOBH}
S.~Willerton.
\newblock The magnitude of odd balls via {H}ankel determinants of reverse
  {B}essel polynomials.
\newblock {\em Discrete Analysis}, 5:1--42, 2020.

\end{thebibliography}

\end{document}